\newif\ifhtml
\numberwithin{equation}{section}
\renewcommand{\arraystretch}{1.1}
\theoremstyle{plain}
\newtheorem{theorem}{Theorem}[section]
\newaliascnt{lemma}{theorem}
\newtheorem{lemma}[lemma]{Lemma}
\newaliascnt{proposition}{theorem}
\newtheorem{proposition}[proposition]{Proposition}
\newaliascnt{corollary}{theorem}
\newtheorem{corollary}[corollary]{Corollary}
\newtheorem*{scheme}{Octopus induction scheme}
\theoremstyle{remark}
\newaliascnt{remark}{theorem}
\newtheorem{remark}[remark]{Remark}
\newaliascnt{example}{theorem}
\newtheorem{example}[example]{Example}
  \renewcommand{\address}[1]{}
  \renewcommand{\email}[1]{}
  \newcommand{\zcref}[1]{\autoref{#1}}
  \newcommand{\GraphFourCycle}{\HCode{<img class="inline-graph graph-small" src="html-assets/four-cycle.svg" alt="Four-cycle with vertices 4, 2, 3, 1 in cyclic order" />}}
  \newcommand{\GraphFiveVertexException}{\HCode{<img class="inline-graph graph-medium" src="html-assets/five-vertex-exception.svg" alt="Exceptional five-vertex family: edges 12, 23, and 14 have common weight w greater than zero; edges 35 and 45 have weights q and r with q plus r greater than zero; edge 34 has weight w minus qr divided by q plus r, which is nonnegative; all other weights are zero" />}}
  \newcommand{\GraphReductionDisplay}{\HCode{<div class="graph-reduction"><span class="graph-label">G =</span><img src="html-assets/five-cycle.svg" alt="Weighted five-cycle G" /><span class="graph-arrow">reduce at vertex 5 →</span><span class="graph-label">H =</span><img src="html-assets/reduced-four-cycle.svg" alt="Reduced weighted four-cycle H" /></div>}}
  \newcommand{\GraphFourCycle}{%
    \begin{tikzpicture}[baseline]{
    \draw
        (-0.5,-0.5) node[anchor=base,draw,circle] (A) {$4$}
        (-0.5,0.5) node[anchor=base,draw,circle] (B) {$2$}
        (0.5,0.5) node[anchor=base,draw,circle] (C) {$3$}
        (0.5,-0.5) node[anchor=base,draw,circle] (D) {$1$};
    \draw (D)--(A); \draw (A)--(B); \draw (B)--(C); \draw (C)--(D);
    }\end{tikzpicture}}
  \newcommand{\GraphFiveVertexException}{%
    \begin{tikzpicture}[baseline]{
    \draw
        (-0.5,-0.5) node[anchor=base,draw,circle] (A) {$1$}
        (-0.5,0.5) node[anchor=base,draw,circle] (B) {$2$}
        (0.5,0.5) node[anchor=base,draw,circle] (C) {$3$}
        (0.5,-0.5) node[anchor=base,draw,circle] (D) {$4$}
        (1.5,0) node[anchor=base,draw,circle] (E) {$5$};
    \draw (D)--(A)--(B)--(C);
    \draw[dashed] (C)--(D); \draw[dashed] (C)--(E)--(D);
    }\end{tikzpicture}}
  \newcommand{\GraphReductionDisplay}{%
    \[
    G=\left(\begin{tikzpicture}[baseline]{
    \draw
        (-0.5,-0.5) node[anchor=base,draw,circle] (A) {$4$}
        (-0.5,0.5) node[anchor=base,draw,circle] (B) {$3$}
        (0.5,0.5) node[anchor=base,draw,circle] (C) {$2$}
        (0.5,-0.5) node[anchor=base,draw,circle] (D) {$1$}
        (1.5,0) node[anchor=base,draw,circle] (E) {$5$};
    \draw (D)--(A) node[midway,below=5pt] {$c_{14}$};
    \draw (A)--(B) node[midway,left=5pt] {$c_{34}$};
    \draw (B)--(C) node[midway,above=5pt] {$c_{23}$};
    \draw (C)--(E) node[near start,above right] {$c_{25}$};
    \draw (D)--(E) node[near start,below right] {$c_{15}$};
    }\end{tikzpicture}\right)
    \quad\xrightarrow[\text{vertex $5$}]{\text{reduce at}}\quad
    H=\left(\begin{tikzpicture}[baseline]{
    \draw
        (-0.5,-0.5) node[anchor=base,draw,circle] (A) {$4$}
        (-0.5,0.5) node[anchor=base,draw,circle] (B) {$3$}
        (0.5,0.5) node[anchor=base,draw,circle] (C) {$2$}
        (0.5,-0.5) node[anchor=base,draw,circle] (D) {$1$};
    \draw (D)--(A) node[midway,below=5pt] {$c_{14}$};
    \draw (A)--(B) node[midway,left=5pt] {$c_{34}$};
    \draw (B)--(C) node[midway,above=5pt] {$c_{23}$};
    \draw (C)--(D) node[midway,right=5pt] {$\tilde c_{12}=\frac{c_{15}c_{25}}{c_{15}+c_{25}}$};
    }\end{tikzpicture}\right)
    \]}
\title[Second eigenspace of the interchange process]{Uniqueness of the second eigenspace of the interchange process}
\author{Dennis Belotserkovskiy and Joe P.\@ Chen}
\address{Department of Mathematics, Colgate University, Hamilton, NY 13346, USA}
\email{dbelotserkovskiy@colgate.edu, jpchen@colgate.edu}
\date{\today}
\keywords{Interchange process, exclusion process, Laplacian, spectral gap, octopus inequality, symmetric group, Young tableaux}
\subjclass[2020]{
05C50, 
15A18, 
20C30, 
60K35. 
}
\thanks{DB thanks the Division of Natural Sciences \& Mathematics at Colgate University for financial support during the summer 2024 research experience. JPC thanks the Research Council of Colgate University for partial financial support.}
\begin{document}

\begin{abstract}
The spectral gap theorem of Caputo, Liggett, and Richthammer states that on any connected weighted graph, the second eigenvalue of the interchange process equals the second eigenvalue of the random walk process.
We characterize the corresponding eigenspace in the regular representation of the symmetric group.
Except when the graph is a $4$-cycle whose four edge weights are equal, the entire second eigenspace lies in the direct sum of copies of the standard representation and is generated by copies of the second eigenspace of the random-walk Laplacian.
The proof refines the octopus induction scheme by isolating the summand of the restricted representation that can support equality, and analyzing it in explicit two-subset and exterior-square coordinates.
\end{abstract}

\maketitle
\ifhtml
\begin{center}
\small Department of Mathematics, Colgate University, Hamilton, NY 13346, USA\\
\href{mailto:dbelotserkovskiy@colgate.edu}{dbelotserkovskiy@colgate.edu}
\qquad
\href{mailto:jpchen@colgate.edu}{jpchen@colgate.edu}
\end{center}
\fi

\setcounter{tocdepth}{1}
\tableofcontents

\section{Introduction}
\label{sec:intro}

\subsection{A recursive construction and an exceptional example}
\label{sec:begin}

On a connected $n$-vertex undirected graph $G=(V(G),E(G))$, and $1\leq k\leq \lfloor \frac{n}{2}\rfloor$, consider the \emph{$k$-particle exclusion process} on $G$ with rates $c_{ij}>0$, $ij\in E(G)$.
Its state space is $\binom{V(G)}{k}$, the set of $k$-subsets of $V(G)$.
Throughout the paper we use the positive-Laplacian convention: $A^{(k)}$ denotes the positive semidefinite Markov Laplacian, so the probabilistic infinitesimal generator is $-A^{(k)}$.  Its matrix is
\begin{align}
\label{eq:Ak}
(A^{(k)})_{\Omega, \Lambda} =
\left\{
\begin{array}{ll}
-c_{ij}, & \text{if } \Lambda = (\Omega \sqcup \{j\}) \setminus \{i\} \text{ for some } i\in \Omega,~ j\notin\Omega,\\
\sum_{i\in \Omega} \sum_{j\notin\Omega}c_{ij}, & \text{if } \Lambda=\Omega,\\
0, & \text{otherwise},
\end{array}
\right.
\qquad
\Omega, \Lambda \in \binom{V(G)}{k}.
\end{align}
The matrix $A^{(k)}$ is symmetric and positive semidefinite, and the exclusion chain is irreducible because $G$ is connected. 
Every row and column of $A^{(k)}$ sums to $0$, so $\sum_{\Omega\in \binom{V(G)}{k}} \delta_\Omega$ is a $0$-eigenvector.  Irreducibility makes this eigenspace one-dimensional.
Also, when $k=1$, we recover the \emph{random walk process} on $G$, and $A^{(1)}$ is the graph Laplacian.
Let the eigenvalues of $A^{(k)}$ be listed in increasing order, $0=\lambda_1(A^{(k)}) < \lambda_2(A^{(k)}) \leq \cdots \leq \lambda_{\binom{n}{k}}(A^{(k)})$.

According to the spectral gap theorem of Caputo, Liggett, and Richthammer \cite{CLR}, to be described shortly, $\lambda_2(A^{(k)})$ is identical for every $1\leq k\leq \lfloor \frac{n}{2}\rfloor$.
What about the second eigenvectors of $A^{(k)}$?
We first recall the standard raising construction for eigenvectors of $A^{(k)}$.

\begin{proposition}
\label{prop:eigrec}
For $1\leq k \leq \lfloor \frac{n}{2}\rfloor-1$ and $u\in \mathbb{R}^{\binom{V(G)}{k}}$, define $u^\uparrow \in \mathbb{R}^{\binom{V(G)}{k+1}}$ by
\begin{align}
\label{eq:uup}
(u^\uparrow )_\Omega = \sum_{i\in \Omega} u_{\Omega \setminus \{i\}}, \qquad \Omega \in \binom{V(G)}{k+1}.
\end{align}
If $A^{(k)} u = \lambda u$ for some eigenvalue $\lambda \in \mathbb{R}$, then $A^{(k+1)} u^\uparrow = \lambda u^\uparrow$.
\end{proposition}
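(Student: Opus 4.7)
The plan is to verify the identity $(A^{(k+1)} u^\uparrow)_\Omega = \lambda\, (u^\uparrow)_\Omega$ for each fixed $(k{+}1)$-subset $\Omega$, by expanding everything from the definitions \eqref{eq:Ak} and \eqref{eq:uup}, and reducing the computation to instances of $(A^{(k)} u)_{\Omega\setminus\{a\}} = \lambda\, u_{\Omega\setminus\{a\}}$ for $a\in\Omega$. Unpacking $A^{(k+1)}$ at $\Omega$ gives
\begin{align*}
(A^{(k+1)} u^\uparrow)_\Omega \;=\; \sum_{i\in\Omega,\, j\notin\Omega} c_{ij}\,\bigl[(u^\uparrow)_\Omega - (u^\uparrow)_{(\Omega\setminus\{i\})\cup\{j\}}\bigr],
\end{align*}
so the first step is to expand each $u^\uparrow$ inside the bracket via \eqref{eq:uup}.

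Next, I would observe that in the two resulting sums over vertices of $\Omega$ and of $(\Omega\setminus\{i\})\cup\{j\}$, the summand at $a=i$ in the first and at $a=j$ in the second are both exactly $u_{\Omega\setminus\{i\}}$, and hence cancel. The surviving contribution, summed over $a\in\Omega\setminus\{i\}$, is
\begin{align*}
(u^\uparrow)_\Omega - (u^\uparrow)_{(\Omega\setminus\{i\})\cup\{j\}} \;=\; \sum_{a\in\Omega\setminus\{i\}}\bigl[u_{\Omega\setminus\{a\}} - u_{(\Omega\setminus\{a,i\})\cup\{j\}}\bigr],
\end{align*}
in which the $a$-th bracket is precisely the $i\to j$ increment of $u$ evaluated at the $k$-set $\Omega\setminus\{a\}$ (which contains $i$ but not $j$).

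Then I would swap the order of summation to put $a\in\Omega$ outermost. For each fixed $a$, setting $\Omega'=\Omega\setminus\{a\}$, the inner double sum over $i\in\Omega'$ and $j\notin\Omega$ is almost $(A^{(k)}u)_{\Omega'}$, except that the latter also allows $j=a$ (since $a\in V\setminus\Omega'$). Adding and subtracting the missing $j=a$ contribution, and invoking the hypothesis $(A^{(k)}u)_{\Omega'}=\lambda u_{\Omega'}$, produces the desired leading term $\lambda\sum_{a\in\Omega} u_{\Omega\setminus\{a\}} = \lambda\, (u^\uparrow)_\Omega$, together with a correction
\begin{align*}
-\sum_{a\in\Omega}\sum_{i\in\Omega\setminus\{a\}} c_{ia}\bigl[u_{\Omega\setminus\{a\}} - u_{\Omega\setminus\{i\}}\bigr].
\end{align*}

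Finally, I would dispatch this correction by the symmetry $c_{ia}=c_{ai}$: relabeling the dummy indices $a\leftrightarrow i$ sends the double sum to its own negative, hence it vanishes. The only delicate part of the whole argument is the index bookkeeping in the cancellation and the boundary-term correction; once those are aligned, there is no genuine obstacle, since the identity is driven purely by linearity and the symmetry of the edge weights.
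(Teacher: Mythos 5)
Your proposal is correct and follows essentially the same route as the paper: expand $u^\uparrow$, cancel the diagonal term (your $a=i$ versus $a=j$ cancellation is the paper's $(\ell,\ell')=(i,j)$ cancellation), reorder the summation to put the removed vertex outermost, and recognize $(A^{(k)}u)_{\Omega\setminus\{a\}}$. The only difference is cosmetic: where the paper silently extends the range of $j$ to $j\notin\Omega\setminus\{\ell\}$, you make the missing $j=a$ contribution explicit and kill it by the antisymmetry under $a\leftrightarrow i$ together with $c_{ia}=c_{ai}$, which is if anything a cleaner account of that step.
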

\begin{proof}
From \eqref{eq:Ak} one finds
\begin{align}
\label{eq:Aku}
(A^{(k)} u)_{\Omega} = \sum_{i\in \Omega} \sum_{j\notin \Omega} c_{ij} \left[u_\Omega- u_{(\Omega \sqcup \{j\}) \setminus \{i\}}\right], \quad \Omega \in \binom{V(G)}{k}.
\end{align}
Similarly
\begin{align}
\label{eq:Akup}
(A^{(k+1)} u^\uparrow)_{\Omega} &=
\sum_{i\in \Omega} \sum_{j\notin \Omega} c_{ij} \left[(u^\uparrow)_\Omega - (u^\uparrow)_{(\Omega\sqcup\{j\})\setminus \{i\}}\right], \quad \Omega \in \binom{V(G)}{k+1}.
\end{align}
Now
\begin{align*}
(u^\uparrow)_\Omega - (u^\uparrow)_{(\Omega\sqcup\{j\})\setminus \{i\}}
&=
\sum_{\ell\in \Omega} u_{\Omega \setminus \{\ell\}}
-
\sum_{\ell'\in (\Omega\sqcup \{j\})\setminus \{i\}} u_{(\Omega \sqcup \{j\})\setminus \{i,\ell'\}} \\
&=
\sum_{\ell\in \Omega \setminus \{i\}} u_{\Omega \setminus \{\ell\}} - \sum_{\ell' \in \Omega \setminus \{i\}}  u_{(\Omega \sqcup \{j\})\setminus \{i,\ell'\}}
\end{align*}
using the cancellation occurring at $(\ell,\ell')=(i,j)$. Plugging this into \eqref{eq:Akup} yields
\begin{align*}
(A^{(k+1)} u^\uparrow)_{\Omega} 
&=
\sum_{i\in \Omega} \sum_{j\notin \Omega} c_{ij} \sum_{\ell\in \Omega \setminus \{i\}} \left[u_{\Omega\setminus \{\ell\}} - u_{(\Omega \sqcup \{j\})\setminus \{i,\ell\}}\right]\\
&=
\sum_{\ell\in\Omega} \sum_{i\in \Omega \setminus \{\ell\}} \sum_{j\notin \Omega} c_{ij}  \left[u_{\Omega\setminus \{\ell\}} - u_{(\Omega \sqcup \{j\})\setminus \{i,\ell\}}\right].
\end{align*}
For a fixed $\Omega$, we may enlarge the innermost sum from $j\notin\Omega$ to $j\notin\Omega\setminus\{\ell\}$ by adding the terms with $j=\ell$.  Their total contribution is
\[
\sum_{\ell\in\Omega}\sum_{i\in\Omega\setminus\{\ell\}}
 c_{i\ell}\bigl(u_{\Omega\setminus\{\ell\}}-u_{\Omega\setminus\{i\}}\bigr)=0,
\]
because the summands cancel in pairs under $(i,\ell)\leftrightarrow(\ell,i)$.  Hence, after invoking \eqref{eq:Aku},
\[
(A^{(k+1)}u^\uparrow)_\Omega
=\sum_{\ell\in\Omega}\sum_{i\in\Omega\setminus\{\ell\}}
 \sum_{j\notin\Omega\setminus\{\ell\}}c_{ij}
 \left[u_{\Omega\setminus\{\ell\}}-
 u_{((\Omega\setminus\{\ell\})\sqcup\{j\})\setminus\{i\}}\right]
=\sum_{\ell\in\Omega}(A^{(k)}u)_{\Omega\setminus\{\ell\}}.
\]
Now apply the hypothesis $A^{(k)}u =\lambda u$ and \eqref{eq:uup} to the last display to obtain the desired identity $A^{(k+1)}u^\uparrow = \lambda u^\uparrow$.
\end{proof}

Via \zcref{prop:eigrec}, every nonzero raised vector obtained from a second eigenvector of the graph Laplacian $A^{(1)}$ is a second eigenvector of $A^{(2)}$, and the construction can then be iterated.
But does one get \emph{all} the second eigenvectors of $A^{(k)}$?
The following example says no.

\begin{example}
\label{ex:4cycle}
Let $G$ be the $4$-cycle
\GraphFourCycle
with all four edge weights equal to $1$; explicitly,
$c_{13}=c_{23}=c_{14}=c_{24}=1$ and $c_{12}=c_{34}=0$.
The graph Laplacian is
\begin{align*}
    A^{(1)}=\begin{bmatrix}
        2 & 0& -1 & -1 \\
        0 & 2 & -1 & -1 \\
        -1 & -1 & 2 & 0 \\
        -1 & -1 & 0 & 2
    \end{bmatrix},
\end{align*}
which has eigenvalues $0, 2,2 ,4$.
The second eigenspace of $A^{(1)}$ is the span of
\begin{align}
\label{eq:A1eig}
[1, -1, 0, 0]^{\mathsf T} \quad \text{and} \quad [0,0,1,-1]^{\mathsf T}.
\end{align}
Meanwhile, if we list the 2-subsets of $\{1,2,3,4\}$ in the order $(12), (13), (14), (23), (24), (34)$, then the positive Laplacian of the 2-particle exclusion process is
\begin{align*}
    A^{(2)}=\begin{bmatrix}
        4 & -1&-1&-1&-1& 0\\
        -1&2&0&0&0& -1\\
        -1&0&2&0&0&-1\\
        -1&0&0&2&0&-1\\
        -1&0&0&0&2&-1\\
        0&-1&-1&-1&-1&4
    \end{bmatrix},
\end{align*}
which has eigenvalues $0,2,2,2,4,6$.
The second eigenspace of $A^{(2)}$ is the span of
\begin{align}
\label{eq:A2eig}
\boldsymbol{w}_1=[0,-1,1, 0, 0, 0]^{\mathsf T}, \quad
\boldsymbol{w}_2=[0, -1,0,1,0,0]^{\mathsf T},\quad
\boldsymbol{w}_3=[0, -1 ,0,0, 1, 0]^{\mathsf T}.
\end{align}
Applying \zcref{prop:eigrec} to \eqref{eq:A1eig} generates two second eigenvectors of $A^{(2)}$,
\begin{align}
\label{eq:A1eigA2}
\boldsymbol{w}_1 - \boldsymbol{w}_2 -\boldsymbol{w}_3 = [0, 1,1,-1,-1, 0]^{\mathsf T} \quad\text{and}\quad
-\boldsymbol{w}_1 + \boldsymbol{w}_2 -\boldsymbol{w}_3 = [0, 1, -1, 1 ,-1 ,0]^{\mathsf T}.
\end{align}
This means a third second eigenvector of $A^{(2)}$,
\begin{align}
\label{eq:3rdA2eig}
-\boldsymbol{w}_1  -\boldsymbol{w}_2 + \boldsymbol{w}_3 = [0, 1,-1,-1,1,0]^{\mathsf T},
\end{align}
is not accounted for by the second eigenvectors of $A^{(1)}$.
\end{example}

The additional vector in \zcref{ex:4cycle} raises the structural question that drives the manuscript: might there exist another connected graph for which the second eigenspace of $A^{(k)}$ is not generated by the second eigenvectors of $A^{(1)}$?

The present paper answers this question completely.
It turns out that this property resides in a higher process called the \emph{interchange process}, a continuous-time Markov chain on the symmetric group $\mathfrak{S}_n$ with transpositions being the allowed transitions.
In the same positive-Laplacian convention, the interchange Laplacian has matrix
\[
(L_G)_{g,g'} = 
\left\{
\begin{array}{ll}
-c_{ij}, &\text{if } g'=g(i,j),\\
\sum_{1\leq i<j\leq n} c_{ij}, & \text{if } g'=g,\\
0, &\text{otherwise},
\end{array}
\right.
\qquad g,g'\in \mathfrak{S}_n,
\]
where $(i,j)$ denotes the transposition interchanging $i<j$ in $[n]:=\{1,\ldots,n\}$, and $c_{ij}=0$ for nonedges.  Thus $-L_G$ is the probabilistic infinitesimal generator.
Our main \zcref{thm:main} characterizes the second eigenspace of $L_G$ on every finite connected weighted graph.
As a corollary, we will determine the multiplicity of the second eigenvalue $\lambda_2(A^{(k)})$.

\subsection{Known complete-graph eigenspace results}

For the complete graph $K_n$ with every edge weight equal to $1$, the spectra of both $A^{(k)}$ and $L_G$ are explicitly known.

For $2\leq k\leq \lfloor \frac{n}{2}\rfloor$, the $k$-particle exclusion process on $K_n$ is the random walk on the \emph{Johnson graph} $J(n,k)$: its vertices are the $k$-subsets of $[n]$, and two vertices are adjacent when the corresponding subsets differ by exchanging one element.
The graph Laplacian on $J(n,k)$ has eigenvalues $\lambda^{(i)}=i(n+1-i)$ for $i\in \{0,1,\cdots, k\}$, with corresponding multiplicity $\binom{n}{i}-\binom{n}{i-1}$. 
Under the natural symmetric-group action, the $\lambda^{(i)}$-eigenspace is the irreducible summand $S^{(n-i,i)}$ inside the permutation module on the $k$-subsets of $[n]$. (We summarize the representation theory of the symmetric group in \zcref{sec:setup}.)
For an accessible proof without representation theory, see \cite{GodsilMeagher}*{Sections 6.2$\sim$6.3}.

For the interchange process on $K_n$, Diaconis and Shahshahani \cite{DS81}*{Corollary 4} identified all eigenvalues and their multiplicities.  Because the operator is central in this case, it acts by a scalar on each irreducible summand of the regular representation; a full eigenspace may be the direct sum of several such summands when their scalars coincide. 

On a weighted graph, we note that \cite{CLR}*{Section 1.4} lists some eigenvalues of $L_G$, along with their multiplicities, that arise from certain irreducible representations of $\mathfrak{S}_n$.

\subsection{The spectral gap theorem revisited}
\label{sec:history}

The second eigenvalue, or \emph{spectral gap}, problem has received much attention in the past three decades.
Since the random walk process is a projection of the interchange process, $\lambda_2(L_G) \leq \lambda_2(A^{(1)})$. 
Around 1992 Aldous conjectured that on every graph whose present edges have weight $1$, $\lambda_2(L_G)=\lambda_2(A^{(1)})$.
Many partial results followed in the years since, but it was Caputo, Liggett, and Richthammer in 2010 \cite{CLR} who decisively established the equality on all weighted graphs, using a Schur reduction scheme on the number of vertices that was inspired by the recursive approach of Handjani and Jungreis \cite{HJ}.
We shall refer to the scheme, which consists of three inequalities and one equality, as the \emph{octopus induction scheme} in \zcref{sec:octopus} below.
See \cite{CLR}*{Section 1.3} for a history behind the spectral gap theorem.

Our focus is on the second eigenspace of $L_G$, which appears to receive less attention.
To identify the second eigenspace, we revisit the octopus induction scheme and determine when each inequality in that scheme can be an equality.
In particular, we need to understand when equality in the \emph{octopus inequality} \cite{CLR}*{Theorem 2.3} holds in various settings.

\section{Setup and main results}
\label{sec:setup}

Throughout this paper, $G$ denotes a graph on $[n]:=\{1,\ldots,n\}$ equipped with symmetric weights $c_{ij}=c_{ji}\geq0$ for $i\neq j$, with $c_{ii}=0$.
Let
\[
E:=\{ij:1\leq i<j\leq n,\ c_{ij}>0\}.
\]
We call the unweighted graph $([n],E)$ the \emph{support graph}, or \emph{contact graph}, of the weights.  Graph-theoretic terms such as connectedness, degree, path, and cycle always refer to this graph.  If there exists $c>0$ such that $c_{ij}=c$ (resp.\@ $c_{ij}=1$) for every $ij\in E$, we say that $G$ has \emph{uniform} (resp.\@ \emph{simple}) weights.
Finally, for a nonempty subset $\Omega\subseteq[n]$, let $\mathfrak{S}_\Omega$ denote the group of permutations of $\Omega$.

\subsection{Operators on the symmetric group algebra}

Let $\mathfrak{S}_n$ denote the symmetric group on $[n]$.
A real $\mathfrak{S}_n$-module is a real vector space equipped with a linear left action of $\mathfrak{S}_n$.
The basic example is the \emph{group algebra}
\[
\mathbb{R}[\mathfrak{S}_n]
:=\left\{\sum_{g\in\mathfrak{S}_n}\gamma_g g:\gamma_g\in\mathbb{R}\right\},
\]
the vector space spanned by the elements of $\mathfrak{S}_n$, equipped here with the left regular action.
We equip $\mathbb{R}[\mathfrak{S}_n]$ with the inner product $\langle\cdot,\cdot\rangle_{\mathfrak{S}_n}$ for which the permutations form an orthonormal basis.  The inversion map $g\mapsto g^{-1}$ identifies the right-regular matrix in the introduction, whose transitions are $g\mapsto g(i,j)$, with the left-regular action used below.  Thus both realizations have the same spectrum and eigenspace multiplicities.

We introduce three closely related operators $L_G, L_H, \Delta$ on the symmetric group algebra. Each of them is a weighted sum of the group algebra elements $\operatorname{Id}-(i,j)$, where $(i,j)$ is the transposition between $i<j$.
The main operator of interest is the \emph{interchange operator} on $G$,
\begin{align}
\label{eq:LG}
L_G := \sum_{1\leq i<j\leq n} {c}_{ij} (\operatorname{Id}- (i,j)) \quad\text{ on } \mathbb{R}[\mathfrak{S}_n].
\end{align}
Directly from the definition, $L_G$ is a symmetric, positive semidefinite operator with respect to the inner product $\langle\cdot,\cdot\rangle_{\mathfrak{S}_n}$, and $L_G \left(\sum_{g\in \mathfrak{S}_n} g\right)= 0$.
If $G$ is connected, then the kernel of $L_G$ is $1$-dimensional, equaling the span of $\sum_{g\in\mathfrak{S}_n} g$.

Next, we reduce the graph $G$ at vertex $n$, and produce a new graph $H$ on $[n-1]$ with edge weights 
\begin{align}
\label{eq:redweights}
\tilde{c}_{ij} :={c}_{ij} + \frac{{c}_{in} {c}_{jn}}{\sum_{k=1}^{n-1} {c}_{kn}}, \quad 1\leq i<j\leq n-1.
\end{align}
Formula \eqref{eq:redweights} is obtained via a Schur complement computation which is explained in \zcref{sec:Schur} below. 
If $G$ is connected, then $H$ is connected: along any path in $G$, an occurrence of the removed vertex $n$ between two neighbors $i$ and $j$ can be replaced by the positive reduced edge of weight $c_{in}c_{jn}/\sum_k c_{kn}$.
We call this operation the \emph{Schur reduction}, or \emph{Kron reduction}, of $G$ at $n$.
Analogously we define the interchange operator on $H$ as
\[
L_H := \sum_{1\leq i<j\leq n-1} \tilde{c}_{ij} (\operatorname{Id}-(i,j)) \quad\text{ on } \mathbb{R}[\mathfrak{S}_{n-1}].
\]
Write $H\sqcup\{n\}$ for the graph obtained by adjoining $n$ to $H$ as an isolated vertex.  Then $L_H$ extends naturally to an operator $L_{H\sqcup\{n\}}$ on $\mathbb{R}[\mathfrak{S}_n]$, with $n$ fixed by every permutation in $\mathfrak{S}_{n-1}$.

Given $G$ and its reduced graph $H$, we define the corresponding \emph{octopus operator} on $\mathbb{R}[\mathfrak{S}_n]$,
\begin{align}
\label{eq:octopusop}
\Delta := L_G - L_{H\sqcup \{n\}} = \sum_{i=1}^{n-1} {c}_{in}(\operatorname{Id}-(i,n))  - \sum_{1\leq i<j\leq n-1} \frac{{c}_{in} {c}_{jn}}{\sum_{k=1}^{n-1} {c}_{kn}} (\operatorname{Id} - (i,j)).
\end{align}
The significance of $\Delta$ comes from the nontrivial \emph{octopus inequality} of Caputo, Liggett, and Richthammer, which states that $\Delta$ is a positive semidefinite operator on $\mathbb{R}[\mathfrak{S}_n]$ \cite{CLR}*{Theorem 2.3}.
See \zcref{sec:octopus} for the role $\Delta$ plays in the spectral gap induction, and \zcref{sec:rigidity} for the branchwise analysis of its equality cases.

\subsection{Symmetric group and Young tableaux}
\label{sec:sgyt}

A \emph{submodule} of a $\mathfrak{S}_n$-module is an invariant linear subspace.  A nonzero module is \emph{irreducible} if it has no proper nonzero submodule, and \emph{reducible} otherwise.
By Maschke's theorem \cite{Sagan}*{\S 1.5}, every finite-dimensional $\mathfrak{S}_n$-module over $\mathbb{R}$ is a direct sum of irreducible submodules.

We summarize the required \emph{Young tableau} machinery below; see \cite{Sagan}*{Chapter 2} for further details.

We say that $\mu=(\mu_1, \mu_2, \cdots, \mu_q)$ is a \emph{partition} of $n$, denoted $\mu \vdash n$, if the positive integers $\mu_1\geq \mu_2 \geq \cdots \geq \mu_q$ satisfy $\sum_{i=1}^q \mu_i=n$.
Each partition $\mu\vdash n$ is represented by a \emph{Young diagram}, an array of $n$ boxes having $q$ left-justified rows, with row $i$ containing $\mu_i$ boxes.
If $\mu$ and $\lambda=(\lambda_1,\ldots,\lambda_r)$ are partitions of $n$, we say that $\mu$ \emph{dominates} $\lambda$, denoted $\mu\unrhd\lambda$, when
$\mu_1+\cdots+\mu_i\geq\lambda_1+\cdots+\lambda_i$ for every $i\geq1$.
A partition with at most two nonzero parts is called \emph{two-row}, and a partition of the form $(n-r,1^r)$ is called a \emph{hook}.

Observe that there is a natural association between $\mu$ and the Young subgroup $\mathfrak{S}_\mu:=\mathfrak{S}_{\{1,\cdots, \mu_1\}} \times \mathfrak{S}_{\{\mu_1+1,\cdots, \mu_1+\mu_2\}} \times \cdots \times \mathfrak{S}_{\{\mu_1+\cdots+\mu_{q-1}+1,\cdots, n\}}$ of $\mathfrak{S}_n$.
Tableaux and their row-equivalence classes provide a concrete model for the cosets of $\mathfrak{S}_\mu$ in $\mathfrak{S}_n$.
To be precise, each $\mu$-tableau is an array $t$ obtained by filling the boxes of the Young diagram of shape $\mu$ with the positive integers $1, 2,\cdots, n$ bijectively.
For instance, there are six $(2,1)$-tableaux:
\[
\begin{smallmatrix}\boxed{1}&\boxed{2}\\[-1pt]\boxed{3}&\end{smallmatrix}
~,\quad
\begin{smallmatrix}\boxed{ 2}&\boxed{ 1}\\[-1pt]\boxed{ 3}&\end{smallmatrix}
~,\quad
\begin{smallmatrix}\boxed{ 1}&\boxed{ 3}\\[-1pt]\boxed{ 2}&\end{smallmatrix}
~,\quad
\begin{smallmatrix}\boxed{ 3}&\boxed{ 1}\\[-1pt]\boxed{ 2}&\end{smallmatrix}
~,\quad
\begin{smallmatrix}\boxed{ 2}&\boxed{ 3}\\[-1pt]\boxed{ 1}&\end{smallmatrix}
~,\quad
\begin{smallmatrix}\boxed{ 3}&\boxed{ 2}\\[-1pt]\boxed{ 1}&\end{smallmatrix}
~.
\]
We will be particularly interested in the \emph{standard} tableaux, wherein the rows and columns form increasing sequences.
In the preceding example, only 
\(\begin{smallmatrix}\boxed{ 1}&\boxed{ 2}\\[-1pt]\boxed{ 3}&\end{smallmatrix}\)
and
\(\begin{smallmatrix}\boxed{ 1}&\boxed{ 3}\\[-1pt]\boxed{ 2}&\end{smallmatrix}\)
are standard $(2,1)$-tableaux.

Given a tableau $t$ we can produce isomorphic copies of certain Young subgroups.
Let $R_1, \cdots, R_q$ (resp.\@ $C_1, \cdots, C_m$) denote the rows (resp.\@ columns) of $t$.
The \emph{row} (resp.\@ \emph{column}) \emph{stabilizer group} of $t$ is the subgroup $\mathfrak{R}_t = \mathfrak{S}_{R_1} \times \cdots \times \mathfrak{S}_{R_q}$ (resp.\@ $\mathfrak{C}_t = \mathfrak{S}_{C_1} \times \cdots \times \mathfrak{S}_{C_m}$) of $\mathfrak{S}_n$.

The action of $g\in\mathfrak{S}_n$ on a $\mu$-tableau $t$ is obtained by applying $g$ to every entry of $t$.

\subsection{Permutation modules and Specht modules}
\label{sec:permSpecht}

Two $\mu$-tableaux $t$ and $t'$ are said to be \emph{row-equivalent} if there exists $\sigma \in \mathfrak{R}_t$ such that $t'= \sigma t$, \emph{i.e.,} both $t$ and $t'$ have the same entries in every row. 
This defines an equivalence relation $\sim$ on the tableaux, and we call each row-equivalence class $\{t\} :=\{ t': t'\sim t\}$ a \emph{tabloid}.
In the preceding example there are three $(2,1)$-tabloids:
\[
\left\{\begin{matrix}1\;2\\3\end{matrix}\right\}
~,\quad
\left\{\begin{matrix}1\;3\\2\end{matrix}\right\}
~,\quad
\left\{\begin{matrix}2\;3\\1\end{matrix}\right\}
~,
\]
where the first two tabloids are represented by standard tableaux.
Note that $g\{t\} = \{gt\}$ for every $g\in \mathfrak{S}_n$.
If $\{t_1\}, \cdots, \{t_k\}$ is a complete list of $\mu$-tabloids---there are $n!/\prod_{i=1}^q \mu_i!$ of them---then the vector space 
\[
M^\mu = \operatorname{span}_{\mathbb{R}}\{\{t_1\},\ldots,\{t_k\}\}
\]
equipped with the left action described above forms the \emph{permutation module} corresponding to $\mu$.
Equivalently, $M^\mu$ is the orbit span of any one tabloid: $M^\mu=\operatorname{span}_{\mathbb{R}}\{g\{t\}:g\in\mathfrak{S}_n\}$.

Some important permutation modules are: the \emph{trivial representation} $M^{(n)}$; the \emph{defining representation} $M^{(n-1,1)}$; and the \emph{regular representation} $M^{(1^n)}$.
Consider the action of $L_G$ \eqref{eq:LG} on each of these modules.
First of all, $L_G M^{(n)}=0$ as expected.
Next, each tabloid in $M^{(1^n)}$ can be identified with a unique $g\in\mathfrak{S}_n$ by reading its singleton rows as the one-line word $g(1)\cdots g(n)$.
So $M^{(1^n)} \cong \mathbb{R}[\mathfrak{S}_n]$, and the action of $L_G$ on $M^{(1^n)}$ is the positive Laplacian of the interchange process on $G$.
Finally, for each $\ell\in [n]$, the tabloid $\{t_\ell\}$ in $M^{(n-1,1)}$ with entry $\ell$ in row $2$ can be identified with the unit coordinate vector $\boldsymbol{e}_\ell$ of $\mathbb{R}^n$.
Since 
\[
(i,j) \{t_\ell\} = \left\{
\begin{array}{ll}
\{t_j\}, & \ell=i,\\
\{t_i\}, & \ell=j,\\
\{t_\ell\}, & \text{otherwise},
\end{array}
\right.
\]
we deduce that the action of $L_G$ on $M^{(n-1,1)}$ is isomorphic to the action of the \emph{graph Laplacian} $\mathcal{L}_G$ on $\mathbb{R}^n$, with corresponding matrix
\begin{align}
\label{eq:graphLap}
\mathcal{L}_G = 
\begin{bmatrix}
\sum_{k\neq 1} {c}_{1k} & -{c}_{12} & -{c}_{13} &\cdots & -{c}_{1n} \\
-{c}_{12} & \sum_{k\neq 2}{c}_{2k} & -{c}_{23} & \cdots & -{c}_{2n}\\
-{c}_{13} &-{c}_{23} &  \sum_{k\neq 3}{c}_{3k}   & \cdots & -{c}_{3n}\\
\vdots & \vdots & \vdots& \ddots & \vdots \\
-{c}_{1n} & -{c}_{2n} & -{c}_{3n} & \cdots & \sum_{k\neq n} {c}_{nk}
\end{bmatrix}
\end{align}
in the coordinate  basis $\{\boldsymbol{e}_1, \cdots, \boldsymbol{e}_n\}$.
Thus the action of $L_G$ on $M^{(n-1,1)}$ is the positive Laplacian of the random walk on $G$.
Note that $\mathcal{L}_G$ is positive semidefinite with eigenvalues $0=\lambda_1(\mathcal{L}_G) \leq \lambda_2(\mathcal{L}_G) \leq \cdots \leq \lambda_n(\mathcal{L}_G)$.
If $G$ is connected, then $\lambda_2(\mathcal{L}_G)>0$ by the Perron-Frobenius theorem.


Except for the trivial representation, the permutation modules $M^\mu$ are reducible, so we next identify the irreducible representations of $\mathfrak{S}_n$.
Again let $t$ be a $\mu$-tableau, $\mathfrak{C}_t$ be its column group, and $\mathfrak{C}_t^- := \sum_{\sigma \in \mathfrak{C}_t} \operatorname{sgn}(\sigma) \sigma$ be the signed group sum of $\mathfrak{C}_t$.
The \emph{polytabloid} associated with the tableau $t$ is
\begin{align}
\label{eq:etdef}
\boldsymbol{e}_t := \mathfrak{C}_t^- \{t\} \in M^\mu.
\end{align}
Returning to the example, the three $(2,1)$-polytabloids are
\[
\begin{matrix}1&2\\3&\end{matrix}
=
\left\{\begin{matrix}1\;2\\3\end{matrix}\right\}
-
\left\{\begin{matrix}3\;2\\1\end{matrix}\right\}
~,\quad
\begin{matrix}1&3\\2&\end{matrix}
=
\left\{\begin{matrix}1\;3\\2\end{matrix}\right\}
-
\left\{\begin{matrix}2\;3\\1\end{matrix}\right\}
~,\quad
\begin{matrix}2&3\\1&\end{matrix}
=
\left\{\begin{matrix}2\;3\\1\end{matrix}\right\}
-
\left\{\begin{matrix}1\;3\\2\end{matrix}\right\}
~,\quad
\]
the first two being standard polytabloids.
The \emph{Specht module} $S^\mu$ is the submodule of $M^\mu$ spanned by the polytabloids $\boldsymbol{e}_t$ for all $\mu$-tableaux $t$.
Since $g\boldsymbol{e}_t=\boldsymbol{e}_{gt}$, it is also the orbit span of any one polytabloid: $S^\mu=\operatorname{span}_{\mathbb{R}}\{g\boldsymbol{e}_t:g\in\mathfrak{S}_n\}$.

\begin{proposition}[Specht classification and Young's rule; see \cite{Sagan}*{\S\S 2.4--2.7, 2.11}]
\label{prop:young-rule}
Let $S^\mu$ denote the real Specht module obtained from the integral polytabloid lattice by extension of scalars to $\mathbb{R}$.  The modules $S^\mu$, $\mu \vdash n$, form a complete list of pairwise non-isomorphic irreducible $\mathfrak{S}_n$-modules over $\mathbb{R}$, and each remains irreducible after complexification.  Moreover, each permutation module decomposes according to \textbf{Young's rule},
$
M^\mu \cong \bigoplus_{\lambda \unrhd \mu} m_{\lambda \mu} S^\lambda,
$
where the Kostka number $m_{\lambda\mu}$ is the multiplicity of $S^\lambda$ in $M^\mu$.
\end{proposition}

\begin{proposition}[Standard polytabloid basis theorem; see \cite{Sagan}*{\S 2.5}]
\label{prop:standard-polytabloid-basis}
$\{\boldsymbol{e}_t : t \text{ is a standard $\mu$-tableau}\}$ forms a basis for $S^\mu$.
\end{proposition}

Consequently, the regular representation decomposes over $\mathbb{R}$ as
\[
\mathbb{R}[\mathfrak{S}_n]\cong M^{(1^n)}
\cong\bigoplus_{\mu\vdash n}(\dim S^\mu)S^\mu,
\]
where $mV$ denotes the direct sum of $m$ copies of $V$; the dimensions are given by the hook-length formula \cite{Sagan}*{\S 3.10}.

\zcref{prop:young-rule} also gives the decomposition of the defining representation $M^{(n-1,1)} \cong S^{(n)} \oplus S^{(n-1,1)}$, where $S^{(n-1,1)}$ is the \emph{standard representation}.
In $M^{(n-1,1)}$, the trivial representation $S^{(n)}$ is the $1$-dimensional subspace spanned by the sum of all $(n-1,1)$-tabloids, $\sum_{\ell=1}^n \{t_\ell\}$, while $S^{(n-1,1)}$ is spanned by the standard polytabloids $\boldsymbol{e}_{t_\ell}=\{t_\ell\} - \{t_1\}$, $2\leq \ell \leq n$, per \zcref{prop:standard-polytabloid-basis}.
Using this correspondence between $\{t_\ell\}$ and the coordinate vector $\boldsymbol{e}_\ell$ of $\mathbb{R}^n$, we can identify $\sum_{\ell=1}^n \{t_\ell\}$ with $\sum_{\ell=1}^n \boldsymbol{e}_\ell$, or the all-$1$ vector $\boldsymbol{1}^{(n)}$ in the coordinate basis for $\mathbb{R}^n$.
Because every row sum of $\mathcal{L}_G$ is zero, this vector is an eigenvector with the smallest eigenvalue $\lambda_1(\mathcal{L}_G)=0$.
By the same token, we can identify $\{t_\ell\}-\{t_1\}$ with $\boldsymbol{e}_\ell-\boldsymbol{e}_1$, $2\leq\ell\leq n$, which form a basis for the subspace of mean-zero vectors in $\mathbb{R}^n$ (\emph{i.e.,} the orthogonal complement to $\boldsymbol{1}^{(n)}$).
According to Rayleigh's variational principle (or the min-max theorem), $\lambda_2(\mathcal{L}_G)$ is the minimum of the Rayleigh quotient over all mean-zero vectors $\boldsymbol{w} \perp \boldsymbol{1}^{(n)}$, $\boldsymbol{w}\neq \boldsymbol{0}$.
So using the correspondence we deduce that $\lambda_2(\mathcal{L}_G)$ equals the minimum eigenvalue of the action of $L_G$ on $S^{(n-1,1)}$. This fact will be invoked frequently in the sequel.

\subsection{The branching rule for restricted and induced representations}

Sometimes we wish to restrict an irreducible representation $S^\mu$ of $\mathfrak{S}_n$ to $\mathfrak{S}_{n-1}$ (resp.\@ induce $S^\mu$ to $\mathfrak{S}_{n+1}$).
The restriction and induction decompositions are governed by the \textbf{branching rule}.
Given the Young diagram for $\mu \vdash n$, an \emph{inner corner} of $\mu$ is a box in $\mu$ whose removal leaves the Young diagram of a partition of $n-1$.
Any partition attained by such a removal is denoted $\mu-\Box$.
Conversely, an \emph{outer corner} of $\mu$ is a box not in $\mu$ whose addition produces the Young diagram of a partition of $n+1$.
Any partition attained by such an addition is denoted $\mu+\Box$.

\begin{proposition}[Branching rule; see \cite{Sagan}*{\S 2.8}]
\label{prop:branching}
If $\mu\vdash n$, then 
\[
S^\mu \mathord\downarrow_{\mathfrak{S}_{n-1}} \cong \bigoplus_{\mu' = \mu-\Box} S^{\mu'}
\quad\text{and}\quad
S^\mu \mathord\uparrow^{\mathfrak{S}_{n+1}} \cong \bigoplus_{\mu'=\mu+\Box} S^{\mu'}.
\]
\end{proposition}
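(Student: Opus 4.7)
The plan is to establish the restriction formula via an explicit filtration of $S^\mu \mathord\downarrow_{\mathfrak{S}_{n-1}}$ by $\mathfrak{S}_{n-1}$-submodules, and then deduce the induction formula formally via Frobenius reciprocity.

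Enumerate the inner corners of $\mu$ in order of increasing row index as boxes $(r_1,c_1), \ldots, (r_k,c_k)$ with $r_1 < r_2 < \cdots < r_k$, and write $\mu^{(i)} \vdash n-1$ for the partition obtained by deleting $(r_i,c_i)$. By \zcref{prop:polytabloids}, a basis of $S^\mu$ consists of the standard polytabloids $\{{\bf e}_t\}$, and in every standard $\mu$-tableau $t$ the maximal entry $n$ must occupy some inner corner of $\mu$ (being simultaneously maximal in its row and its column). Partition this basis accordingly: let $B_i$ be the set of standard polytabloids with $n$ in the $i$-th inner corner, and set
\[
V^{(i)} := \mathrm{span}_{\mathbb{R}}\!\left(B_1 \cup \cdots \cup B_i\right), \qquad i = 0, 1, \ldots, k,
\]
yielding a chain $0 = V^{(0)} \subset V^{(1)} \subset \cdots \subset V^{(k)} = S^\mu$. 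The two claims to verify are: (i) each $V^{(i)}$ is stable under $\mathfrak{S}_{n-1}$; and (ii) the assignment ${\bf e}_t \mapsto {\bf e}_{t'}$, where $t'$ denotes the $\mu^{(i)}$-tableau on $[n-1]$ obtained from $t \in B_i$ by erasing the box containing $n$, descends to an $\mathfrak{S}_{n-1}$-isomorphism $V^{(i)}/V^{(i-1)} \xrightarrow{\,\sim\,} S^{\mu^{(i)}}$. Both claims reduce to a single combinatorial fact: for every $\sigma \in \mathfrak{S}_{n-1}$ and every standard $t \in B_i$, straightening $\sigma {\bf e}_t = {\bf e}_{\sigma t}$ into a $\mathbb{Z}$-linear combination of standard polytabloids via Garnir relations produces only polytabloids ${\bf e}_{t^\star}$ in which $n$ sits at the $j$-th inner corner for some $j \leq i$. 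Granting this monotonicity and invoking Maschke's theorem (valid in characteristic $0$) to split the filtration yields
\[
S^\mu \mathord\downarrow_{\mathfrak{S}_{n-1}} \;\cong\; \bigoplus_{i=1}^{k} S^{\mu^{(i)}} \;=\; \bigoplus_{\mu'=\mu-\Box} S^{\mu'}.
\]

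For the induction statement, apply Frobenius reciprocity: for every $\nu \vdash n+1$,
\[
\dim\mathrm{Hom}_{\mathfrak{S}_{n+1}}\!\left(S^\mu \mathord\uparrow^{\mathfrak{S}_{n+1}},\, S^\nu\right) \;=\; \dim\mathrm{Hom}_{\mathfrak{S}_n}\!\left(S^\mu,\, S^\nu \mathord\downarrow_{\mathfrak{S}_n}\right).
\]
The restriction formula just established together with Schur's lemma makes the right-hand side equal to $1$ exactly when $\mu = \nu - \Box$, equivalently $\nu = \mu + \Box$, and $0$ otherwise; comparing multiplicities in the semisimple decomposition gives $S^\mu \mathord\uparrow^{\mathfrak{S}_{n+1}} \cong \bigoplus_{\mu'=\mu+\Box} S^{\mu'}$.

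The main obstacle is the monotonicity claim underlying (i) and (ii): that Garnir straightening cannot relocate $n$ to a later corner in the chosen enumeration. A quick consistency check in $\mu=(2,1)$---where the transposition $(1,2)\in \mathfrak{S}_2$, applied to the polytabloid attached to the standard tableau with $3$ in the bottom corner, returns a combination involving also the polytabloid with $3$ in the upper (earlier) corner---already exhibits the pattern. In general, the verification reduces to tracking how each Garnir relation interacts with the column containing $n$, leveraging that $n$ is the maximal entry and so sits at the bottom of its column; this combinatorial bookkeeping is the sole non-formal ingredient of the argument, with Frobenius reciprocity and Maschke's theorem handling the rest.
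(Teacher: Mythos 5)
The paper offers no proof of this statement -- it is quoted directly from Sagan (Theorem 2.8.3) -- and your outline is in fact the standard proof given there: filter $S^\mu \mathord\downarrow_{\mathfrak{S}_{n-1}}$ according to which inner corner holds $n$, identify the successive quotients with the modules $S^{\mu-\Box}$, split the filtration by Maschke, and obtain the induction half by Frobenius reciprocity (the Specht modules being absolutely irreducible, Schur's lemma gives multiplicity one on the right-hand side). The reciprocity half of your write-up is complete. The problem is the restriction half: everything rests on the ``monotonicity claim'' -- that for $\sigma\in\mathfrak{S}_{n-1}$ and $t\in B_i$ the standard expansion of $\sigma{\bf e}_t$ involves only tableaux with $n$ at a corner $j\le i$ -- and you assert it, check one example, and gesture at ``tracking Garnir relations,'' but never prove it. This is not a verification one can wave at: straightening is recursive (one Garnir move creates new descents elsewhere, which must be resolved in turn), so controlling where $n$ lands after the full cascade by direct bookkeeping is genuinely delicate. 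As written, the argument has a hole exactly where all of the content of the theorem lies.

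The standard way to close the gap (and what Sagan actually does) avoids Garnir bookkeeping entirely. For the inner corner in row $r_i$, define the $\mathfrak{S}_{n-1}$-homomorphism $\theta_i: M^\mu\to M^{\mu^{(i)}}$ on tabloids by deleting the box containing $n$ if $n$ lies in row $r_i$, and sending the tabloid to $0$ otherwise. Since $n$ sits at the bottom of its column in any $\mu$-tableau $t$ with $n$ at an inner corner, splitting the signed sum $\mathfrak{C}_t^-\{t\}$ according to whether the column permutation fixes $n$ shows that $\theta_i({\bf e}_t)={\bf e}_{t'}$ when $n$ occupies row $r_i$ of $t$, and $\theta_i({\bf e}_t)=0$ when $n$ occupies row $r_j$ with $j<i$ (column permutations move $n$ only upward, so no tabloid in the expansion has $n$ in row $r_i>r_j$). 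A downward induction with a dimension count then gives $V^{(i-1)}=V^{(i)}\cap\ker\theta_i$, which simultaneously proves that each $V^{(i)}$ is a submodule and that $\theta_i$ induces $V^{(i)}/V^{(i-1)}\cong S^{\mu^{(i)}}$; your monotonicity claim falls out as a corollary rather than serving as an unproved input. Either import that argument or simply cite Sagan, as the paper does.
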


\zcref{prop:branching} will be invoked in \zcref{sec:octopus} below, and plays an essential role in our proofs to follow.
In particular, when $S^{(n-2,1)}$ occurs in the restriction of $S^\mu$ to $\mathfrak{S}_{n-1}$, we denote the corresponding summand by
\begin{align}
\label{eq:branchspace}
\mathcal{B}_\mu(n):=\left(S^\mu\mathord\downarrow_{\mathfrak{S}_{n-1}}\right)_{(n-2,1)}.
\end{align}
The branching rule is multiplicity-free, so $\mathcal{B}_\mu(n)$ is the unique $\mathfrak{S}_{n-1}$-submodule of $S^\mu$ isomorphic to $S^{(n-2,1)}$.

We call a realization of $\mathcal{B}_\mu(n)$ \emph{branch-adapted} if the $\mathfrak{S}_{n-1}$-action is visible on an actual invariant subspace, rather than only on a quotient indexed by standard tableaux in which $n$ occupies a chosen removable box.
This distinction matters because the raw span of the corresponding standard polytabloids need not itself be $\mathfrak{S}_{n-1}$-invariant.
For further accounts of Specht modules, Young's rule, the branching rule, and the subspace-versus-quotient realizations used below, see \cite{Sagan}*{Chapter 2} and \cite{Fulton}*{Chapter 7}.

\subsection{Main results}

Having defined all the necessary terms, we can state our main theorem.
Recall that the interchange operator $L_G$ \eqref{eq:LG} is positive semidefinite on $\mathbb{R}[\mathfrak{S}_n] \cong \bigoplus_{\mu \vdash n} (\dim S^\mu) S^\mu$.
We are interested in the action of $L_G$ on each Specht module $S^\mu$, denoted $L_G|_{S^\mu}$.
With an appropriate choice of orthonormal basis for $S^\mu$, one can realize $L_G|_{S^\mu}$ as a symmetric, positive semidefinite operator.
Hence by the spectral theorem $L_G|_{S^\mu}$ has $\dim S^\mu$ nonnegative real eigenvalues, the minimum of which is denoted $\lambda_{\min}(L_G, S^\mu)$.

The spectral gap theorem of \cite{CLR} states that $\lambda_2(L_G)= \lambda_2(\mathcal{L}_G)$.
Equivalently: for all $\mu\vdash n$, $\mu\neq (n), (n-1,1)$, one has the non-strict inequality $\lambda_{\min}(L_G, S^\mu) \geq \lambda_{\min}(L_G, S^{(n-1,1)})$.
Our main theorem gives the necessary and sufficient condition for when this inequality saturates to equality.

\begin{theorem}[Uniqueness of the second eigenspace of $L_G$]
\label{thm:main}
Let $G=(V,E)$ be a connected $n$-vertex graph, $n\geq 3$, equipped with undirected edge weights $\{c_{ij}>0: ij\in E\}$. 
Then the following holds:
\begin{enumerate}[label=(\alph*),wide]
\item \label{item:4cycle} Suppose $G$ is the $4$-cycle equipped with uniform edge weights. Then
\[
\lambda_{\min}(L_G,S^{(2,2)})
=
\lambda_{\min}(L_G,S^{(3,1)}).
\]
The multiplicity of $\lambda_{\min}(L_G,S^{(2,2)})$ (resp.\@ $\lambda_{\min}(L_G,S^{(3,1)})$) is $1$ (resp.\@ $2$). For every $\mu\vdash4$ with $\mu\notin\{(4),(3,1),(2,2)\}$, we have
\[
\lambda_{\min}(L_G,S^\mu)
>
\lambda_{\min}(L_G,S^{(3,1)}).
\]
\item \label{item:not4cycle} If $G$ is any other connected weighted graph, then for all $\mu \vdash n$, $\mu \neq (n), (n-1,1)$, we have $\lambda_{\min}(L_G, S^\mu) > \lambda_{\min}(L_G, S^{(n-1,1)})$.
\end{enumerate}
\end{theorem}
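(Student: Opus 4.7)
The plan is to proceed by strong induction on $n$, following the octopus induction scheme of \cite{CLR} but tracking the equality case at every step. The base cases $n=3,4$ are handled by direct finite-dimensional linear algebra. For $n=3$ the only nontrivial partition is $(1,1,1)$ and a short calculation on weighted triangles and paths gives strict inequality. For $n=4$ one enumerates $\mu \in \{(2,2),(2,1,1),(1^4)\}$ across connected weighted graphs on four vertices, showing that equality $\lambda_{\rm min}(L_G,S^\mu)=\lambda_2(\mathcal{L}_G)$ with $\mu \neq (4),(3,1)$ occurs if and only if $G$ is the uniform $4$-cycle and $\mu=(2,2)$, establishing \zcref{item:4cycle}.

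For the inductive step, fix $n \geq 5$ and suppose toward contradiction that some connected weighted $G$ on $n$ vertices and some nonzero $v \in S^\mu$ with $\mu \neq (n),(n-1,1)$ satisfy $L_G v = \lambda_2(\mathcal{L}_G)\,v$. Pick a vertex $w$, Schur-reduce at $w$ to obtain $H$ on $[n]\setminus\{w\}$, and apply the decomposition $L_G = L_{H\sqcup\{w\}} + \Delta$ from \eqref{eq:octopusop}. Since both operators are positive semidefinite, the saturation forces
\begin{align*}
\langle \Delta v, v\rangle_{\mathfrak{S}_n} = 0
\qquad\text{and}\qquad
L_{H\sqcup\{w\}}\,v = \lambda_2(\mathcal{L}_G)\,v
\end{align*}
simultaneously.

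Now decompose $S^\mu$ under the $\mathfrak{S}_{[n]\setminus\{w\}}$-action fixing $w$ via the branching rule \zcref{prop:branching}: $S^\mu \downarrow \cong \bigoplus_{\nu = \mu -\Box} S^\nu$. Since $L_{H\sqcup\{w\}}$ is $\mathfrak{S}_{[n]\setminus\{w\}}$-equivariant and acts as $L_H$ on each isotypic summand, the second equation forces every nonzero $S^\nu$-component of $v$ to be a minimizing eigenvector of $L_H$ at the level $\lambda_2(\mathcal{L}_G)$. Combined with the inductive hypothesis on $H$ and the monotonicity $\lambda_2(\mathcal{L}_H) \geq \lambda_2(\mathcal{L}_G)$ under Schur reduction, this is possible only for $\nu = (n-2,1)$, and only when $\lambda_2(\mathcal{L}_H) = \lambda_2(\mathcal{L}_G)$. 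Since $(n-2,1) \in \mu-\Box$ iff $\mu \in \{(n-2,2),(n-2,1,1)\}$, these are the only surviving shapes. The borderline sub-case that $n = 5$ and $H$ is itself the uniform $4$-cycle is defused by changing the reduction vertex, using that $G$ (having five or more vertices) cannot be the uniform $4$-cycle.

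The remaining and hardest work is to extract a contradiction from $\langle \Delta v, v\rangle = 0$ given the rigid form of $v$ just obtained. The equality case of the octopus inequality, to be developed in \S\ref{sec:kerocto}, produces explicit linear relations among the coefficients of $v$ in terms of the local weights $\{c_{iw}\}_{i\neq w}$ and the Young symmetrizer for $\mu$. Because $w$ was arbitrary, these relations must hold simultaneously at every vertex; together with the branching constraint that the $S^{(n-2,1)}$-projection of $v$ is a second $\mathcal{L}_H$-eigenvector, the system over-determines $v$ as soon as $n \geq 5$. A case split on the two surviving shapes, $(n-2,2)$ (symmetric in the second row) and $(n-2,1,1)$ (antisymmetric in the first column), then forces $v=0$. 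The main obstacle is precisely this kernel analysis: the inequality $\Delta \geq 0$ in \cite{CLR} is proved through a delicate sign/cancellation argument with no off-the-shelf description of $\ker \Delta$, so the proof will require either a new sum-of-squares decomposition of $\Delta$ exposing its kernel, or a combinatorial argument showing that no nonzero polytabloid combination in $S^{(n-2,2)}$ or $S^{(n-2,1,1)}$ can simultaneously satisfy the octopus-kernel condition at every vertex once $n\geq 5$.
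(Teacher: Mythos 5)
Your skeleton is the paper's: saturate every link of the octopus chain, use the positive semidefiniteness of $\Delta$ (as in \zcref{lem:kernel}) to force the putative eigenvector $v$ into $\ker\Delta|_{S^\mu}$, and use branching plus the induction hypothesis to reduce to $\mu\in\{(n-2,2),(n-2,1^2)\}$ with $\lambda_2(\mathcal{L}_H)=\lambda_2(\mathcal{L}_G)$. But the step that actually closes the induction --- determining $\ker\Delta|_{S^\mu}$ and intersecting it with $S^{(n-2,1)}\mathord\uparrow^{\mathfrak{S}_n}$ --- is exactly what you defer (``the proof will require either a new sum-of-squares decomposition \dots or a combinatorial argument''). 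That is not a detail; it is the bulk of the work. The paper devotes three sections to it: a block-elimination/correction-matrix description of $\ker\Delta$ on all of $\mathbb{R}[\mathfrak{S}_n]$ (\zcref{thm:kercheckable}), a Young-symmetrizer embedding argument giving $\ker\Delta|_{S^{(n-2,2)}}=\{0\}$ when the reduced vertex has degree $\geq 3$ (\zcref{thm:kertriv}), and explicit straightening computations for the degree-$\leq 2$ case (\zcref{prop:2straighten}) and for $\ker\Delta|_{S^{(n-2,1^2)}}\cap S^{(n-2,1)}\mathord\uparrow^{\mathfrak{S}_n}=\{0\}$ (\zcref{prop:11straighten}). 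Without some substitute for these, the argument does not exist yet.

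Moreover, your claim that the octopus-kernel relations ``over-determine $v$ as soon as $n\geq 5$'' and ``force $v=0$'' is false in at least one case you must handle: $n=5$, $G$ the $5$-cycle, $\mu=(3,2)$. There the reduced vertex has degree $2$, and $\ker\Delta|_{S^{(3,2)}}\cap S^{(3,1)}\mathord\uparrow^{\mathfrak{S}_5}$ is a nonzero line (spanned by $(c_{15}+2c_{25}){\bf e}_{t_2}+(c_{15}-c_{25}){\bf e}_{t_3}$), so the kernel condition does not kill $v$; the contradiction must instead come from showing that \eqref{ineq:Schur} cannot also saturate, which requires a separate and fairly delicate analysis of the $4$-cycle Laplacian $\mathcal{L}_H$ (multiplicity of $\lambda_2$, transfer matrices, and the characteristic-polynomial estimates in \zcref{prop:5cycle}). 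Similarly, for $n\geq 6$ cycles and paths the paper combines \zcref{prop:2straighten} with a Laplacian vanishing lemma (\zcref{lem:vanishcycle}, \zcref{prop:pathreduced}) rather than imposing kernel relations at several vertices. Finally, your fix for the borderline case where $H$ is the uniform $4$-cycle (``change the reduction vertex'') is asserted without proof; the paper instead characterizes all such $G$ (\zcref{lem:5reducedcycle}) and shows directly that $\lambda_2(\mathcal{L}_H)>\lambda_2(\mathcal{L}_G)$ there (\zcref{lem:exceptionH}). As written, the proposal is a plausible plan whose hardest components remain open.
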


In other words, except for the uniformly weighted $4$-cycle, the second eigenspace of $L_G$ is contained in the $S^{(n-1,1)}$-isotypic component of the regular representation, namely the direct sum of all copies of the standard representation.

We provide three consequences of \zcref{thm:main}. 

\begin{corollary}
If $G$ is the $4$-cycle with uniform weights, then the second eigenvalue of $L_G$ has multiplicity $8$.
If $G$ is any other connected $n$-vertex graph, the second eigenvalue of $L_G$ has multiplicity equal to $n-1$ times the multiplicity of the second eigenvalue of the graph Laplacian $\mathcal{L}_G$ (\emph{i.e.,} the random walk process). 
\end{corollary}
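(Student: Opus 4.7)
The plan is to combine \zcref{thm:main} with the irreducible decomposition $\mathbb{R}[\mathfrak{S}_n]\cong \bigoplus_{\mu\vdash n}(\dim S^\mu)\,S^\mu$, so that the multiplicity of any eigenvalue $\lambda$ of $L_G$ on $\mathbb{R}[\mathfrak{S}_n]$ reads as the weighted sum $\sum_{\mu\vdash n}(\dim S^\mu)\, m_\mu(\lambda)$, where $m_\mu(\lambda)$ denotes the multiplicity of $\lambda$ as an eigenvalue of $L_G|_{S^\mu}$. A preliminary observation I will record is that, since the action of $L_G$ on $M^{(n-1,1)}\cong S^{(n)}\oplus S^{(n-1,1)}$ is isomorphic to that of $\mathcal{L}_G$ on $\mathbb{R}^n$ and $L_G|_{S^{(n)}}=0$, the spectrum of $L_G|_{S^{(n-1,1)}}$ coincides with the nonzero spectrum of $\mathcal{L}_G$, multiplicities included. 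In particular, the multiplicity of $\lambda_2(\mathcal{L}_G)$ in $L_G|_{S^{(n-1,1)}}$ equals the multiplicity of $\lambda_2(\mathcal{L}_G)$ as an eigenvalue of $\mathcal{L}_G$, while the spectral gap theorem of \cite{CLR} identifies $\lambda_2(L_G)=\lambda_2(\mathcal{L}_G)$.

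For any connected weighted graph $G$ other than the $4$-cycle with uniform weights, I would apply part (b) of \zcref{thm:main} to obtain $\lambda_{\rm min}(L_G,S^\mu)>\lambda_2(L_G)$ for every $\mu\neq (n),(n-1,1)$, which forces $m_\mu(\lambda_2(L_G))=0$ for all such $\mu$. Since $\lambda_2(L_G)>0$ on a connected graph, $m_{(n)}(\lambda_2(L_G))=0$ as well. The weighted sum therefore collapses to $(\dim S^{(n-1,1)})\cdot m_{(n-1,1)}(\lambda_2(L_G))$, which by the preceding observation equals $(n-1)\cdot m$, where $m$ is the multiplicity of $\lambda_2(\mathcal{L}_G)$ as an eigenvalue of $\mathcal{L}_G$.

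For the $4$-cycle with uniform weights I would instead invoke part (a) of \zcref{thm:main}: it supplies $m_{(3,1)}(\lambda_2(L_G))=2$, $m_{(2,2)}(\lambda_2(L_G))=1$, and zero contribution from each of $\mu\in\{(4),(2,1,1),(1^4)\}$ (trivially at $(4)$, and by the strict inequality of part (a) at the remaining two). Using $\dim S^{(3,1)}=3$ and $\dim S^{(2,2)}=2$ from the hook-length formula, the weighted sum produces $3\cdot 2+2\cdot 1=8$. No genuine obstacle arises once \zcref{thm:main} is in hand; the only point requiring minor care is the identification of the $S^{(n-1,1)}$-spectrum of $L_G$ with the nonzero spectrum of $\mathcal{L}_G$, which follows immediately from the decomposition $M^{(n-1,1)}\cong S^{(n)}\oplus S^{(n-1,1)}$.
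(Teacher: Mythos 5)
Your proposal is correct and follows essentially the same route as the paper: decompose $\mathbb{R}[\mathfrak{S}_n]\cong\bigoplus_{\mu\vdash n}(\dim S^\mu)S^\mu$, apply \zcref{thm:main} to kill all contributions outside $S^{(n-1,1)}$ (plus $S^{(2,2)}$ in the $4$-cycle case), and count dimensions. Your explicit identification of the $S^{(n-1,1)}$-spectrum of $L_G$ with the nonzero spectrum of $\mathcal{L}_G$ is a detail the paper leaves implicit, but it is exactly the intended justification.
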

\begin{proof}
Recall
$
\mathbb{R}[\mathfrak{S}_n] \cong \bigoplus_{\mu \vdash n} (\dim S^\mu) S^\mu
$.
The general result follows from \zcref{thm:main}, \zcref{item:not4cycle}, and that $\dim S^{(n-1,1)}=n-1$.
If $G$ is the 4-cycle with uniform weights, the regular-representation decomposition is
$\mathbb{R}[\mathfrak{S}_4] \cong S^{(4)} \oplus 3 S^{(3,1)} \oplus 2 S^{(2,2)} \oplus 3 S^{(2,1,1)} \oplus S^{(1^4)}$. We then use the multiplicities stated in \zcref{thm:main}, \zcref{item:4cycle} to arrive at the total multiplicity $3\cdot 2+ 2\cdot 1=8$ of the second eigenvalue.
\end{proof}

\begin{example}[Multiplicity of the second eigenvalue in the exclusion process]
\label{ex:exclusion}
Let $1\leq k\leq \lfloor \frac{n}{2}\rfloor$, and $M^{(n-k,k)}$ be the permutation module on the $k$-subsets of $[n]$. 
The action of $L_G$ on $M^{(n-k,k)}$ is the positive Laplacian of the exclusion process in which $k$ vertices of $G$ are occupied with a particle, while the remaining $n-k$ vertices are unoccupied.
In particular, when $k=1$ we recover the random walk process on $G$.

By Young's rule $M^{(n-k,k)} \cong \bigoplus_{i=0}^k S^{(n-i,i)}$.
If $G$ is the $4$-cycle with uniform weights, then for $k=1$ the second eigenvalue has multiplicity $2$, as in \eqref{eq:A1eig}.  For $k=2$ it has multiplicity $2+1=3$ by \zcref{thm:main}, \zcref{item:4cycle}; the two raised eigenvectors and the additional $(2,2)$-sector eigenvector are given by \eqref{eq:A1eigA2} and \eqref{eq:3rdA2eig}, respectively.
If $G$ is any other connected graph, then \zcref{thm:main}, \zcref{item:not4cycle} implies that the multiplicity of the second eigenvalue of the exclusion process equals that of the second eigenvalue of the graph Laplacian $\mathcal{L}_G$.
This confirms all the observations we described in \zcref{sec:begin}.
\end{example}

\begin{example}[Multiplicity of the second eigenvalue in the colored exclusion process]
\label{ex:coloredexclusion}
Fix the integers $2\leq q\leq n$ and $\mu_1\geq \mu_2 \geq \cdots \geq \mu_q\geq 1$, subject to $\sum_{j=1}^q \mu_j=n$.
Set the partition $\mu=(\mu_1, \mu_2,\cdots, \mu_q) \vdash n$.
Then the action of $L_G$ on the permutation module $M^\mu$ is the positive Laplacian of the colored exclusion process on $G$, where $\mu_j$ of the $n$ particles are of color (or species) $j$, $1\leq j\leq q$.
By Young's rule (\zcref{prop:young-rule}), $M^\mu \cong \bigoplus_{\lambda \unrhd \mu} m_{\lambda \mu} S^\lambda$, where $m_{\lambda \mu}$ are the Kostka numbers \cite{Sagan}*{\S 2.11}.

If $q=2$ we recover the classical exclusion process of \zcref{ex:exclusion}.
If $G$ is the 4-cycle with uniform weights and $q=3$, the only pertinent permutation module is $M^{(2,1,1)} \cong S^{(4)} \oplus 2 S^{(3,1)} \oplus S^{(2,2)} \oplus S^{(2,1,1)}$, so by \zcref{thm:main}, \zcref{item:4cycle} the second eigenvalue of the colored exclusion process has multiplicity $2\cdot 2+ 1=5$.
If $n=q=4$ we recover the interchange process.
For every other connected graph $G$, $q\geq 3$, and partition $\mu=(\mu_1,\mu_2,\cdots, \mu_q) \vdash n$, \zcref{thm:main}, \zcref{item:not4cycle} implies that the second eigenvalue of the colored exclusion process has multiplicity equal to the Kostka number $m_{\lambda \mu}$ (where $\lambda=(n-1,1)$) times the multiplicity of the second eigenvalue of $\mathcal{L}_G$.
\end{example}

\section{The octopus induction scheme}
\label{sec:octopus}

As mentioned in \zcref{sec:history}, the spectral gap theorem of \cite{CLR} is proved via an induction scheme on $n$. 
In the representation theoretic language this scheme is explained in Cesi \cite{Cesi}*{Section 3.1} and in Alon, Kozma, and Puder \cite{AKP25}*{Section 3.1}.
We use the notation of Alon, Kozma, and Puder.

\begin{scheme}
Let $G$ be a connected $n$-vertex weighted graph, and suppose that the spectral-gap comparison is available for connected weighted graphs on $n-1$ vertices.  Then for every $\mu \vdash n$, $\mu \neq (n), (n-1, 1)$, one has
\ifhtml
\begin{equation}\lambda_{\min}(L_G,S^\mu)\geq\lambda_{\min}(L_{H\sqcup\{n\}},S^\mu)\label{ineq:octo1}\end{equation}
\begin{equation}\lambda_{\min}(L_{H\sqcup\{n\}},S^\mu)=\min_{\mu'=\mu-\Box}\lambda_{\min}(L_H,S^{\mu'})\label{eq:branching}\end{equation}
\begin{equation}\min_{\mu'=\mu-\Box}\lambda_{\min}(L_H,S^{\mu'})\geq\lambda_{\min}(L_H,S^{(n-2,1)})\label{ineq:octo2}\end{equation}
\begin{equation}\lambda_{\min}(L_H,S^{(n-2,1)})\geq\lambda_{\min}(L_G,S^{(n-1,1)}).\label{ineq:Schur}\end{equation}
\else
\begin{align}
\lambda_{\min}(L_G, S^\mu) &\geq
\lambda_{\min}(L_{H\sqcup \{n\}}, S^\mu) \label{ineq:octo1} \\
&= \min_{\mu' = \mu - \Box} \lambda_{\min}(L_H, S^{\mu'})  \label{eq:branching}\\
&\geq \lambda_{\min}(L_H, S^{(n-2,1)}) \label{ineq:octo2} \\
&\geq \lambda_{\min}(L_G, S^{(n-1,1)}) \label{ineq:Schur}.
\end{align}
\fi
Above one may freely designate any vertex of $G$ to be the vertex $n$, and the reduced graph $H$ of $G$ at vertex $n$ is defined accordingly.
\end{scheme}

The four lines have different logical roles in the induction.
\begin{itemize}[wide]
\item \textbf{Imported input.} The octopus inequality gives
$\Delta=L_G-L_{H\sqcup\{n\}}\geq0$ on every $S^\mu$; Rayleigh's principle then yields \eqref{ineq:octo1}.
\item \textbf{Internal identity.} Equality \eqref{eq:branching} follows because, on each $\mathfrak{S}_{n-1}$-submodule of the restricted representation, $L_{H\sqcup\{n\}}$ acts as $L_H$; the branching rule then decomposes $S^\mu\mathord\downarrow_{\mathfrak{S}_{n-1}}$ into its irreducible summands.
\item \textbf{Induction hypothesis.} Inequality \eqref{ineq:octo2} is the size-$(n-1)$ spectral-gap comparison; in the uniqueness proof, its equality cases are controlled by the strictness induction hypothesis.
\item \textbf{Internal graph-Laplacian comparison.} Finally, inequality \eqref{ineq:Schur} compares the second eigenvalue of the graph Laplacian $\mathcal{L}_G$ against the second eigenvalue of the reduced graph Laplacian $\mathcal{L}_H$. 
The reduced Laplacian is a Schur complement rather than a principal submatrix, so ordinary Cauchy interlacing is not the relevant statement.  The qualitative inequality and its equality criterion follow from the Rayleigh-quotient extension argument in \zcref{prop:eigcomp}; the explicit Schur-complement calculation is given in \zcref{sec:Schur}. 
\end{itemize}

Since $G$ is assumed to be connected, $\lambda_{\min}(L_G, S^{(n-1,1)})>0$. 
We search for nontrivial second eigenvectors of $L_G$ not in $S^{(n-1,1)}$, namely: for which $\mu\vdash n$, $\mu\neq (n), (n-1,1)$ does the equality
\[
\lambda_{\min}(L_G, S^\mu) = \lambda_{\min}(L_G, S^{(n-1,1)})
\]
hold?
This requires equality in all three inequalities \eqref{ineq:octo1}, \eqref{ineq:octo2}, and \eqref{ineq:Schur}.

\subsection{Equality in \eqref{ineq:octo1}} \label{sec:eqocto1}

This holds if and only if there exists a nonzero $w \in S^\mu \cap \ker \Delta$ such that $L_{H\sqcup\{n\}} w = \lambda_{\min}(L_G, S^\mu) w = \lambda_{\min}(L_{H\sqcup \{n\}}, S^\mu) w$.
It is a special case of the next lemma.

\begin{lemma}
\label{lem:kernel}
Let $V$ be a finite-dimensional real inner-product space, and let $A,A':V\to V$ be self-adjoint operators.  Suppose that
$\Delta:=A'-A$ is positive semidefinite.  Then
$\lambda_{\min}(A)=\lambda_{\min}(A')$ if and only if there exists
$w\in\ker\Delta\setminus\{0\}$ such that
$Aw=\lambda_{\min}(A)w$.
\end{lemma}
\begin{proof}
Suppose first that $w\neq0$, $\Delta w=0$, and
$Aw=\lambda_{\min}(A)w$.  Then
$A'w=(A+\Delta)w=\lambda_{\min}(A)w$, so
$\lambda_{\min}(A')\leq\lambda_{\min}(A)$.  Since
$A'=A+\Delta$ with $\Delta\geq0$, Rayleigh's variational principle gives the reverse inequality
$\lambda_{\min}(A')\geq\lambda_{\min}(A)$.

Conversely, assume
$\lambda_{\min}(A)=\lambda_{\min}(A')$, and choose a unit eigenvector
$w$ of $A'$ at its smallest eigenvalue; such a vector exists because $V$ is finite-dimensional.  Then
\[
\lambda_{\min}(A)
\leq \langle w,Aw\rangle
=\lambda_{\min}(A')-\langle w,\Delta w\rangle
=\lambda_{\min}(A)-\langle w,\Delta w\rangle.
\]
Hence $\langle w,\Delta w\rangle\leq0$.  Positive semidefiniteness gives the opposite inequality, so
$\langle w,\Delta w\rangle=0$.  Diagonalizing the self-adjoint positive semidefinite operator $\Delta$ shows that this equality forces $\Delta w=0$.  Therefore
$Aw=(A'-\Delta)w=\lambda_{\min}(A')w=\lambda_{\min}(A)w$.
\end{proof}

For the induction, the exact obstruction is the intersection of $\ker\Delta|_{S^\mu}$ with the minimum eigenspace of $L_{H\sqcup\{n\}}|_{S^\mu}$.  Under the induction hypothesis, this minimum eigenspace is contained in $\mathcal{B}_\mu(n)$.  Thus the branchwise condition $\ker\Delta\cap\mathcal{B}_\mu(n)=\{0\}$ is sufficient to make \eqref{ineq:octo1} strict; the required branchwise calculations are carried out in \zcref{sec:rigidity}.

\subsection{Equality in \eqref{ineq:octo2}}

This step uses the inductive strictness hypothesis at size $n-1$.
For $n\geq5$, the two relevant restrictions are
\ifhtml
\begin{equation}S^{(n-2,2)}\mathord\downarrow_{\mathfrak{S}_{n-1}}\cong S^{(n-3,2)}\oplus S^{(n-2,1)}\label{eq:branch22}\end{equation}
\begin{equation}S^{(n-2,1^2)}\mathord\downarrow_{\mathfrak{S}_{n-1}}\cong S^{(n-3,1^2)}\oplus S^{(n-2,1)}\label{eq:branch211}\end{equation}
\else
\begin{align}
S^{(n-2,2)}\mathord\downarrow_{\mathfrak{S}_{n-1}}
&\cong S^{(n-3,2)}\oplus S^{(n-2,1)},
\label{eq:branch22}\\
S^{(n-2,1^2)}\mathord\downarrow_{\mathfrak{S}_{n-1}}
&\cong S^{(n-3,1^2)}\oplus S^{(n-2,1)}.
\label{eq:branch211}
\end{align}
\fi
At $n=4$, the first formula is replaced by
\[
S^{(2,2)}\mathord\downarrow_{\mathfrak{S}_3}\cong S^{(2,1)},
\]
whereas the second formula remains valid.
Under the inductive strictness hypothesis at size $n-1$, equality in \eqref{ineq:octo2} can therefore occur only through the summand $\mathcal{B}_\mu(n)\cong S^{(n-2,1)}$, and only for $\mu=(n-2,2)$ or $\mu=(n-2,1^2)$.
Equivalently, every eigenvector of $L_{H\sqcup\{n\}}|_{S^\mu}$ at its minimum eigenvalue must lie in $\mathcal{B}_\mu(n)$.
The sole exceptional reduced graph at size $4$, namely the uniformly weighted $4$-cycle, is treated separately in \zcref{sec:n4} and in the passage from $n=4$ to $n=5$.
For all other irreducible representations $\mu\neq(n),(n-1,1),(n-2,2),(n-2,1^2)$, strict inequality holds in \eqref{ineq:octo2}.

\subsection{Equality in \eqref{ineq:Schur}} 
\label{sec:Schur}

For small graphs, equality or strictness in \eqref{ineq:Schur} can be checked directly. 
We now derive the Schur reduction and the equality criterion stated in \zcref{prop:eigcomp}. 

Let $G$ be connected, let $H$ be its reduction at vertex $n$, and recall $\lambda_2(\mathcal{L}_G)=\lambda_{\min}(L_G,S^{(n-1,1)})$.  Set
\[
\boldsymbol{c}:=(c_{1n},\ldots,c_{n-1,n})^{\mathsf T},\qquad s:=\sum_{i=1}^{n-1}c_{in}>0.
\]
With the last coordinate separated, write
\begin{align}
\label{eq:LGblock}
\mathcal{L}_G = \left[
\begin{array}{cccc|c}
\sum_{k\neq 1} {c}_{1k} & -{c}_{12}  &\cdots & -{c}_{1,n-1} & -{c}_{1n} \\
-{c}_{12} & \sum_{k\neq 2}{c}_{2k}  & \cdots & -{c}_{2,n-1} & -{c}_{2n}\\
\vdots & \vdots &  \ddots & \vdots& \vdots \\ 
-{c}_{1,n-1} & -{c}_{2,n-1} & \cdots & \sum_{k\neq n-1} {c}_{n-1,k} & -{c}_{n-1,n}\\ \hline
-{c}_{1n} & -{c}_{2n} &  \cdots & -{c}_{n-1,n} &  \sum_{k\neq n} {c}_{nk}
\end{array}\right]
=:
\left[
\begin{array}{c|c}
B & -\boldsymbol{c} \\ \hline
 -\boldsymbol{c}^{\mathsf T} & s
\end{array}
\right]
\end{align}
and compute its Schur complement with respect to the bottom-right block consisting of the $(n,n)$ entry:
\begin{align}
\label{eq:SchurLap}
\mathcal{L}_H:=B-\boldsymbol{c}s^{-1}\boldsymbol{c}^{\mathsf T}=
\begin{bmatrix}
\sum_{k\neq1}\tilde c_{1k} & -\tilde{c}_{12} & \cdots & -\tilde{c}_{1,n-1} \\
-\tilde{c}_{12} & \sum_{k\neq2}\tilde c_{2k} & \cdots & -\tilde{c}_{2,n-1} \\
\vdots & \vdots & \ddots & \vdots \\
-\tilde{c}_{1,n-1} & -\tilde{c}_{2,n-1} & \cdots & \sum_{k\neq n-1}\tilde c_{n-1,k}
\end{bmatrix}.
\end{align}
In \eqref{eq:SchurLap}, the negatives of the off-diagonal entries 
\[
\tilde{c}_{ij} = {c}_{ij} + \frac{{c}_{in} {c}_{jn}}{s}\qquad (1\leq i,j \leq n-1,~ i\neq j)
\]
are precisely the reduced edge weights from \eqref{eq:redweights}.  Moreover,
\[
(\mathcal L_H)_{ii}
=\sum_{\substack{1\leq k\leq n\\k\neq i}}c_{ik}-\frac{c_{in}^2}{s}
=\sum_{\substack{1\leq k\leq n-1\\k\neq i}}\tilde c_{ik},
\]
so every row and column of \eqref{eq:SchurLap} sums to $0$.
Thus \eqref{eq:SchurLap} is exactly the Laplacian of the reduced graph $H$, which justifies the notation $\mathcal{L}_H$.
We have analogously $\lambda_2(\mathcal{L}_H) = \lambda_{\min}(L_H, S^{(n-2,1)})$.

\begin{lemma}
\label{lem:Schur}
For every $\boldsymbol{w}\in \mathbb{R}^{n-1}$,
\[
\min_{y\in \mathbb{R}} \left(\begin{bmatrix}\boldsymbol{w}^{\mathsf T} & y\end{bmatrix} \mathcal{L}_G \begin{bmatrix} \boldsymbol{w} \\ y\end{bmatrix}\right) =\boldsymbol{w}^{\mathsf T} \mathcal{L}_H \boldsymbol{w},
\]
and the minimum is attained exactly at $y^*=\frac{\boldsymbol{c}^{\mathsf T} \boldsymbol{w}}{s}$.
\end{lemma}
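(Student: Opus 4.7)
The plan is to recognize the left-hand side as a strictly convex quadratic in the scalar $y$, minimize in one variable, and observe that the resulting value is precisely the quadratic form associated with the Schur complement. Using the block decomposition \eqref{eq:LGblock} of $\mathcal{L}_G$, I would first expand
\[
\begin{bmatrix}\mathbf{w}^{\sf t} & y\end{bmatrix} \mathcal{L}_G \begin{bmatrix} \mathbf{w} \\ y\end{bmatrix} = \mathbf{w}^{\sf t} B \mathbf{w} - 2y\, \mathbf{c}^{\sf t}\mathbf{w} + s y^2,
\]
which is a quadratic in $y$ whose leading coefficient is $s = \sum_{k=1}^{n-1} c_{kn}$.

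Next, I would observe that since $G$ is connected and $n\geq 2$, vertex $n$ is incident to at least one edge, so $s>0$. This makes the quadratic strictly convex in $y$, hence it attains its unique minimum at the critical point determined by $\partial_y = -2\mathbf{c}^{\sf t}\mathbf{w} + 2sy = 0$, namely $y^* = \mathbf{c}^{\sf t}\mathbf{w}/s$.

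Substituting $y^*$ back into the expanded expression and simplifying gives
\[
\mathbf{w}^{\sf t} B\mathbf{w} - \frac{(\mathbf{c}^{\sf t}\mathbf{w})^2}{s} = \mathbf{w}^{\sf t}\!\left(B - \frac{\mathbf{c}\mathbf{c}^{\sf t}}{s}\right)\!\mathbf{w},
\]
and the matrix in parentheses is exactly the Schur complement $\mathcal{L}_H$ computed in \eqref{eq:SchurLap}. So the minimum equals $\mathbf{w}^{\sf t}\mathcal{L}_H \mathbf{w}$, as required.

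There is no real obstacle here; this is a one-variable optimization. The only point requiring a word of care is the positivity $s>0$, which is what simultaneously ensures strict convexity (hence uniqueness of the minimizer $y^*$) and the validity of the Schur reduction formula \eqref{eq:redweights} used to identify $B - s^{-1}\mathbf{c}\mathbf{c}^{\sf t}$ with $\mathcal{L}_H$.
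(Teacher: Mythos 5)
Your proof is correct and follows essentially the same route as the paper's: expand the block quadratic form, minimize the resulting convex quadratic in $y$ (the paper completes the square where you differentiate, which is equivalent), and identify the minimum value with the Schur complement quadratic form ${\bf w}^{\sf t}\mathcal{L}_H{\bf w}$. Your explicit remark that $s>0$ by connectedness is a small point the paper leaves implicit, but otherwise the arguments coincide.
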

\begin{proof}
In the block notation of \eqref{eq:LGblock}, the energy under the minimum reads 
\[
\begin{bmatrix}\boldsymbol{w}^{\mathsf T} & y\end{bmatrix} \left[
\begin{array}{c|c}
B & -\boldsymbol{c} \\ \hline
 -\boldsymbol{c}^{\mathsf T} & s
\end{array}
\right] \begin{bmatrix} \boldsymbol{w} \\ y\end{bmatrix}
=
s\left(y^2 - 2\frac{\boldsymbol{c}^{\mathsf T} \boldsymbol{w}}{s} y\right) +\boldsymbol{w}^{\mathsf T} B \boldsymbol{w}
=
s\left(y- \frac{\boldsymbol{c}^{\mathsf T} \boldsymbol{w}}{s}\right)^2  + \left(\boldsymbol{w}^{\mathsf T}B\boldsymbol{w} - \frac{(\boldsymbol{c}^{\mathsf T} \boldsymbol{w})^2}{s}\right).
\]
Being a convex quadratic function of $y$, this expression is minimized at $y^* = \frac{\boldsymbol{c}^{\mathsf T}\boldsymbol{w}}{s}$, returning the value $\boldsymbol{w}^{\mathsf T}B \boldsymbol{w} - \boldsymbol{w}^{\mathsf T} \boldsymbol{c} s^{-1} \boldsymbol{c}^{\mathsf T} \boldsymbol{w} = \boldsymbol{w}^{\mathsf T}\mathcal{L}_H \boldsymbol{w}$, per \eqref{eq:SchurLap}.
\end{proof}

\begin{proposition}[Second Laplacian eigenvalues under Schur reduction]
\label{prop:eigcomp}
For connected $G$, one has $\lambda_2(\mathcal{L}_H)\geq\lambda_2(\mathcal{L}_G)$, which is \eqref{ineq:Schur}.  Equality holds if and only if $\mathcal{L}_G$ has a second eigenvector that vanishes at the removed vertex.  Equivalently, there is a nonzero $\boldsymbol{w}\in\mathbb R^{n-1}$ such that
$
\begin{bmatrix}\boldsymbol{w}\\0\end{bmatrix}
$
is a second eigenvector of $\mathcal L_G$.  Any such vector automatically satisfies $\boldsymbol{w}\perp\boldsymbol{1}^{(n-1)}$,
\begin{align}
\label{eq:cw}
\sum_{i=1}^{n-1}c_{in}\boldsymbol{w}(i)=0,
\end{align}
and $\mathcal L_H\boldsymbol{w}=\lambda_2(\mathcal L_H)\boldsymbol{w}$.
\end{proposition}

\begin{proof}
According to Rayleigh's variational principle,
\[
\lambda_2(\mathcal{L}_H) = \min_{\substack{\boldsymbol{w}\in \mathbb{R}^{n-1} \\ \|\boldsymbol{w}\|=1,~ \boldsymbol{w}\perp \boldsymbol{1}^{(n-1)}}} \boldsymbol{w}^{\mathsf T} \mathcal{L}_H \boldsymbol{w}
\quad
\text{and}
\quad
\lambda_2(\mathcal{L}_G) = \min_{\substack{\boldsymbol{u}\in \mathbb{R}^{n} \\ \|\boldsymbol{u}\|=1,~ \boldsymbol{u}\perp \boldsymbol{1}^{(n)}}} \boldsymbol{u}^{\mathsf T} \mathcal{L}_G \boldsymbol{u}.
\]
By \zcref{lem:Schur}, 
\begin{align}
\label{eq:l2LH}
\lambda_2(\mathcal{L}_H) = \min_{\substack{\boldsymbol{w}\in \mathbb{R}^{n-1} \\ \|\boldsymbol{w}\|=1, ~\boldsymbol{w}\perp \boldsymbol{1}^{(n-1)}}} \begin{bmatrix} \boldsymbol{w}^{\mathsf T} &  y^*\end{bmatrix}  \mathcal{L}_G \begin{bmatrix} \boldsymbol{w} \\ y^* \end{bmatrix},
\end{align}
where $y^* = y^*(\boldsymbol{w}) := \frac{\boldsymbol{c}^{\mathsf T} \boldsymbol{w}}{s}$.
This resembles the variational form of $\lambda_2(\mathcal{L}_G)$, except that in general $\left[\begin{matrix} \boldsymbol{w} \\ y^* \end{matrix}\right]$ is neither orthogonal to $\boldsymbol{1}^{(n)}$ nor a unit vector in $\mathbb{R}^n$.

To make the comparison apt, we subtract from $\left[\begin{matrix} \boldsymbol{w}\\ y^*\end{matrix}\right]$ its orthogonal projection onto $\boldsymbol{1}^{(n)}$,
\[
\frac{1}{n}\left(\sum_{i=1}^{n-1} \boldsymbol{w}(i)+ y^*\right) \boldsymbol{1}^{(n)} = \frac{y^*}{n} \boldsymbol{1}^{(n)},
\]
where we used $\boldsymbol{w}\perp \boldsymbol{1}^{(n-1)}$. 
Let us adopt the shorthand 
\[
\boldsymbol{u}^* = \boldsymbol{u}^*(\boldsymbol{w}):=\begin{bmatrix} \boldsymbol{w}\\ y^*\end{bmatrix} - \frac{y^*}{n}\boldsymbol{1}^{(n)},
\]
and note that $\boldsymbol{u}^* \perp \boldsymbol{1}^{(n)}$.
Since $\mathcal{L}_G \boldsymbol{1}^{(n)}=0$, we have $\left[\begin{matrix} \boldsymbol{w}^{\mathsf T} &  y^*\end{matrix} \right] \mathcal{L}_G\left[\begin{matrix} \boldsymbol{w} \\ y^* \end{matrix} \right]= (\boldsymbol{u}^*)^{\mathsf T} \mathcal{L}_G \boldsymbol{u}^*$.
Meanwhile,
\begin{align*}
\|\boldsymbol{u}^*\|^2 &= \begin{bmatrix} \boldsymbol{w}^{\mathsf T} & y^*\end{bmatrix} \begin{bmatrix} \boldsymbol{w} \\ y^*\end{bmatrix} - 2 \frac{y^*}{n}  \begin{bmatrix} \boldsymbol{w}^{\mathsf T} & y^*\end{bmatrix} \boldsymbol{1}^{(n)} + \frac{(y^*)^2}{n}  = \|\boldsymbol{w}\|^2+ (y^*)^2 - \frac{(y^*)^2}{n} = 1 + \left(1-\frac{1}{n}\right) (y^*)^2. 
\end{align*}
Therefore \eqref{eq:l2LH} is equal to
\begin{equation}
\label{ineq:Rayleigh}
\begin{aligned}
\min_{\substack{\boldsymbol{w}\in \mathbb{R}^{n-1} \\ \|\boldsymbol{w}\|=1, ~\boldsymbol{w}\perp \boldsymbol{1}^{(n-1)}}} \frac{(\boldsymbol{u}^*)^{\mathsf T} \mathcal{L}_G \boldsymbol{u}^*}{\|\boldsymbol{u}^*\|^2} \left(1+\left(1-\frac{1}{n}\right)(y^*)^2\right)
&\geq
\min_{\substack{\boldsymbol{w}\in \mathbb{R}^{n-1} \\ \|\boldsymbol{w}\|=1,~ \boldsymbol{w}\perp \boldsymbol{1}^{(n-1)}}} \frac{(\boldsymbol{u}^*)^{\mathsf T} \mathcal{L}_G \boldsymbol{u}^*}{\|\boldsymbol{u}^*\|^2} \\
&\geq 
\min_{\substack{\boldsymbol{u}\in \mathbb{R}^{n} \\ \|\boldsymbol{u}\|=1,~ \boldsymbol{u}\perp \boldsymbol{1}^{(n)}}} \boldsymbol{u}^{\mathsf T} \mathcal{L}_G \boldsymbol{u} = \lambda_2(\mathcal{L}_G),
\end{aligned}
\end{equation}
which proves the first statement of the Proposition.

Now suppose $\lambda_2(\mathcal{L}_H ) =\lambda_2(\mathcal{L}_G)$, and let $\boldsymbol{w}$ attain the minimum in \eqref{eq:l2LH}.  For this $\boldsymbol{w}$, the two factors in the first line of \eqref{ineq:Rayleigh} satisfy
\[
\frac{(\boldsymbol{u}^*)^{\mathsf T}\mathcal{L}_G\boldsymbol{u}^*}{\|\boldsymbol{u}^*\|^2}\geq\lambda_2(\mathcal{L}_G)
\qquad\text{and}\qquad
1+\left(1-\frac1n\right)(y^*)^2\geq1.
\]
Their product equals $\lambda_2(\mathcal{L}_G)$, so both inequalities are equalities.  Hence $y^*=0$, which is exactly \eqref{eq:cw}, and $\boldsymbol{u}^*=\left[\begin{matrix}\boldsymbol{w}\\0\end{matrix}\right]$ attains the Rayleigh minimum for $\mathcal{L}_G$.  Thus $\left[\begin{matrix}\boldsymbol{w}\\0\end{matrix}\right]$ is a second eigenvector of $\mathcal{L}_G$.
Conversely, suppose that $\left[\begin{matrix}\boldsymbol{w}\\0\end{matrix}\right]$ is a nonzero second eigenvector of $\mathcal{L}_G$.
Because its eigenvalue is positive and $\ker\mathcal{L}_G=\mathbb{R}\boldsymbol{1}^{(n)}$, it is orthogonal to $\boldsymbol{1}^{(n)}$, and hence $\boldsymbol{w}\perp\boldsymbol{1}^{(n-1)}$.
The last coordinate of the eigenvalue equation is $-\boldsymbol{c}^{\mathsf T}\boldsymbol{w}=0$, which is exactly \eqref{eq:cw}.
After normalizing $\boldsymbol{w}$ and inserting it into \eqref{eq:l2LH}, we obtain $\lambda_2(\mathcal{L}_H)\leq\lambda_2(\mathcal{L}_G)$.  Together with the already proved reverse inequality, this yields equality.
Moreover, every such $\boldsymbol{w}$ is automatically a second eigenvector of $\mathcal{L}_H$ because of
\begin{align*}
\mathcal{L}_G \begin{bmatrix} \boldsymbol{w}\\ 0\end{bmatrix} &= \begin{bmatrix} B & -\boldsymbol{c} \\ -\boldsymbol{c}^{\mathsf T} & s\end{bmatrix} \begin{bmatrix} \boldsymbol{w}\\ 0\end{bmatrix} = \begin{bmatrix} B\boldsymbol{w} \\ -\boldsymbol{c}^{\mathsf T} \boldsymbol{w} \end{bmatrix} = \begin{bmatrix} B\boldsymbol{w}\\ 0\end{bmatrix} = \lambda_2(\mathcal{L}_G)\begin{bmatrix} \boldsymbol{w}\\ 0\end{bmatrix},\\
\mathcal{L}_H \boldsymbol{w} &= (B-(-\boldsymbol{c})s^{-1} (-\boldsymbol{c}^{\mathsf T}))\boldsymbol{w} = B\boldsymbol{w} - 0\boldsymbol{w} = \lambda_2(\mathcal{L}_G)\boldsymbol{w},
\end{align*}
and the inequality $\lambda_2(\mathcal{L}_H) \geq \lambda_2(\mathcal{L}_G)$ proved above.
This completes the proof.
\end{proof}

\begin{remark}[Why the extension hypothesis is essential]
\label{rem:warning}
The requirement in \zcref{prop:eigcomp} that $\left[\begin{matrix}\boldsymbol{w}\\0\end{matrix}\right]$ be a second eigenvector of $\mathcal L_G$ cannot be omitted.
If $\boldsymbol{w}\in \mathbb{R}^{n-1}$ is a second eigenvector of $\mathcal{L}_H$, and $\sum_{i=1}^{n-1} c_{in} \boldsymbol{w}(i)=0$, then $\left[\begin{matrix} \boldsymbol{w}\\0\end{matrix}\right] \in \mathbb{R}^n$ may or may not be an eigenvector of $\mathcal{L}_G$.
Even when the extended vector is an eigenvector of $\mathcal L_G$, its eigenvalue need not be the second eigenvalue. 
We will encounter this issue in the proofs of \zcref{thm:existunique} and \zcref{prop:5cycle}, respectively.
For a concrete example see also \zcref{rem:subtlewarning}.
\end{remark}

\section{Proof of the main theorem}
\label{sec:mainproof}

We proceed in four stages: $n=3$; $n=4$; $n=5$; and $n\geq 6$. 
The induction process uses the octopus scheme of \zcref{sec:octopus}.

\subsection{The case $n=3$}
\label{sec:n3}

We first verify \zcref{thm:main} for $n=3$.
Given that the irreducible representations of $\mathfrak{S}_3$ are $(3), (2,1), (1^3)$, it suffices to check that $\lambda_{\min}(L_G, S^{(1^3)}) > \lambda_{\min}(L_G, S^{(2,1)})$. Indeed, $S^{(1^3)}$ is the 1-dimensional \emph{sign representation} of $\mathfrak{S}_3$, and we denote its basis vector by $\boldsymbol{e}$. 
For every transposition $(i,j)$, $1\leq i<j\leq 3$, we have $(i,j)\boldsymbol{e} = -\boldsymbol{e}$, and hence $L_G \boldsymbol{e} = \sum_{1\leq i<j\leq 3} {c}_{ij} (\operatorname{Id}- (i,j))\boldsymbol{e} = 2 ({c}_{12}+ {c}_{13} + {c}_{23})\boldsymbol{e}$.
Thus $\lambda_{\min}(L_G, S^{(1^3)}) = 2({c}_{12}+ {c}_{13} + {c}_{23})$.
A direct characteristic-polynomial calculation for the $3\times3$ graph Laplacian gives 
\[
\lambda_{\min}(L_G, S^{(2,1)}) = \lambda_2(\mathcal{L}_G)= ({c}_{12} + {c}_{13} +{c}_{23}) - \sqrt{\frac{1}{2}[({c}_{12} - {c}_{13})^2 +({c}_{13} - {c}_{23})^2
+
({c}_{23} - {c}_{12})^2 ]}.
\]
Therefore $\lambda_{\min}(L_G, S^{(1^3)}) > \lambda_{\min}(L_G, S^{(2,1)}) $ so long as one of ${c}_{12}, {c}_{23}, {c}_{13}$ is positive.

This result can be readily extended to all $n\geq 3$.

\begin{proposition}[Interchange on the sign representation]
\label{prop:signrep}
The interchange operator $L_G$ acts as the scalar $2 \sum_{1\leq i<j\leq n} {c}_{ij}$ on $S^{(1^n)}$. If $n\geq 3$, we have $2 \sum_{1\leq i<j\leq n} {c}_{ij} >  \lambda_2(\mathcal{L}_G) = \lambda_{\min}(L_G, S^{(n-1,1)}) $ whenever one of the edge weights ${c}_{ij}$ is positive.
\end{proposition}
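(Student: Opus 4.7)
The first claim is a direct generalization of the $n=3$ computation already carried out. The Specht module $S^{(1^n)}$ is one-dimensional and isomorphic to the sign representation of $\mathfrak{S}_n$ (this follows from \zcref{prop:polytabloids}: there is a unique standard tableau of shape $(1^n)$, giving a single polytabloid on which every transposition acts by $-1$). Let ${\bf e}$ span $S^{(1^n)}$. Then $(i,j){\bf e} = -{\bf e}$ for every $1\leq i<j\leq n$, so by the definition \eqref{eq:LG} of the interchange operator,
\[
L_G{\bf e} = \sum_{1\leq i<j\leq n} c_{ij}({\rm Id}-(i,j)){\bf e} = \sum_{1\leq i<j\leq n} c_{ij}(1-(-1)){\bf e} = 2\sum_{1\leq i<j\leq n} c_{ij}\,{\bf e},
\]
which gives the scalar action. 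This is the easy part.

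For the inequality, the plan is to bound $\lambda_2(\mathcal{L}_G)$ from above using the trace of $\mathcal{L}_G$. Reading off the diagonal of \eqref{eq:graphLap},
\[
\mathrm{tr}(\mathcal{L}_G) = \sum_{k=1}^n \sum_{i\neq k} c_{ki} = 2\sum_{1\leq i<j\leq n} c_{ij}.
\]
Since $\mathcal{L}_G$ is positive semidefinite with $\lambda_1(\mathcal{L}_G)=0$, the nonzero eigenvalues satisfy $\lambda_2\leq \lambda_3\leq\cdots\leq \lambda_n$ and sum to $\mathrm{tr}(\mathcal{L}_G)$. In particular,
\[
(n-1)\,\lambda_2(\mathcal{L}_G) \leq \sum_{k=2}^n \lambda_k(\mathcal{L}_G) = 2\sum_{1\leq i<j\leq n} c_{ij},
\]
so $\lambda_2(\mathcal{L}_G) \leq \frac{2}{n-1}\sum_{i<j} c_{ij}$. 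For $n\geq 3$ we have $\frac{2}{n-1}\leq 1 < 2$, hence
\[
\lambda_2(\mathcal{L}_G) \leq \sum_{1\leq i<j\leq n} c_{ij} < 2\sum_{1\leq i<j\leq n} c_{ij},
\]
where the strict inequality uses the hypothesis that at least one $c_{ij}$ is positive (so the right-hand side is strictly positive). Combined with the identity $\lambda_2(\mathcal{L}_G) = \lambda_{\rm min}(L_G, S^{(n-1,1)})$ established at the end of \zcref{sec:permSpecht}, this yields the claim.

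There is no real obstacle here: the argument is a one-line trace bound once one notes that the sign representation is an eigenspace on which $L_G$ acts by the full sum $2\sum c_{ij}$, which is the same as $\mathrm{tr}(\mathcal{L}_G)$, whereas $\lambda_2(\mathcal{L}_G)$ must be the \emph{smallest} of the $n-1\geq 2$ nonzero Laplacian eigenvalues sharing that total.
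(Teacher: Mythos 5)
Your proof is correct. The first part (the scalar action on the sign representation) is identical to the paper's argument. For the upper bound on $\lambda_2(\mathcal{L}_G)$ you take a slightly different route: the paper tests the Rayleigh quotient on the projection of a Dirac mass $\delta_{\{i\}}$ onto $({\bf 1}^{(n)})^\perp$, giving the vertex-local bound $\lambda_2(\mathcal{L}_G)\leq \frac{n}{n-1}\sum_{j\neq i}c_{ij}$, whereas you use the global trace bound $(n-1)\lambda_2(\mathcal{L}_G)\leq \operatorname{tr}(\mathcal{L}_G)=2\sum_{i<j}c_{ij}$. Both are standard, both give $\lambda_2(\mathcal{L}_G)\leq \sum_{i<j}c_{ij}<2\sum_{i<j}c_{ij}$ for $n\geq 3$ once some weight is positive, and your version is arguably more self-contained since it needs nothing beyond the spectral theorem and the value of the trace. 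One cosmetic slip: you call $\lambda_2,\dots,\lambda_n$ ``the nonzero eigenvalues,'' which is false for disconnected $G$; fortunately the inequality $(n-1)\lambda_2\leq \sum_{k=2}^n\lambda_k$ you actually use only needs $\lambda_2$ to be the smallest of these $n-1$ nonnegative eigenvalues, so nothing breaks.
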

\begin{proof}
Again $(i,j)\boldsymbol{e}=-\boldsymbol{e}$ for every transposition acting on the single basis vector $\boldsymbol{e}$ of $S^{(1^n)}$, so the first statement follows. 
For the second statement, fix $i\in[n]$ and project the coordinate vector $\boldsymbol e_i$ onto $(\boldsymbol 1^{(n)})^\perp$.  Its Rayleigh quotient is
\[
\frac{n}{n-1}\sum_{j\neq i}c_{ij},
\]
so the variational principle gives
\[
\lambda_2(\mathcal L_G)\leq\frac{n}{n-1}\sum_{j\neq i}c_{ij}
\leq\frac32\sum_{j\neq i}c_{ij}
<2\sum_{1\leq k<\ell\leq n}c_{k\ell}.
\]
The final inequality is strict whenever at least one edge weight is positive.
\end{proof}

\subsection{The case $n=4$}
\label{sec:n4}

There are three irreducible Specht sectors in which an additional second eigenvector could occur: $(2,2)$, $(2,1^2)$, and $(1^4)$.
The sign sector $(1^4)$ is excluded by \zcref{prop:signrep}, leaving the two shapes $(2,2)$ and $(2,1^2)$.
They play different roles.  We begin with $(2,2)$ because it is the only one that can support equality, and because the explicit two-dimensional calculation in this sector reveals the exceptional uniformly weighted $4$-cycle.  After completing that calculation, we return to $(2,1^2)$ and explain why its corresponding branch is always rigid.

By the octopus induction scheme of \zcref{sec:octopus}, equality at the interchange gap requires equality in \eqref{ineq:octo1}, \eqref{ineq:octo2}, and \eqref{ineq:Schur}.
For either remaining shape, the restrictions $\mu'=\mu-\Box$ include $(2,1)$, so equality in \eqref{ineq:octo2} is possible.
The restriction is multiplicity-free; whenever the minimum in \eqref{eq:branching} is attained only at $S^{(n-2,1)}$, the corresponding minimum eigenspace of $L_{H\sqcup\{n\}}|_{S^\mu}$ lies in the selected branch $\mathcal{B}_\mu(n)$.
Thus the first substantive question is whether the octopus operator has a kernel vector in that branch.

The next theorem gives the complete analysis in the $(2,2)$ sector.

\begin{theorem}
\label{thm:existunique}
Suppose $G$ is a connected $4$-vertex graph. Then 
$\lambda_{\min}(L_G, S^{(2,2)}) = \lambda_{\min}(L_G, S^{(3,1)})$ if and only if $G$ is the $4$-cycle with uniform weights, in which case $\lambda_{\min}(L_G, S^{(2,2)})$ (resp.\@ $\lambda_{\min}(L_G, S^{(3,1)})$) has multiplicity $1$ (resp.\@ $2$).
Otherwise, $\lambda_{\min}(L_G, S^{(2,2)}) > \lambda_{\min}(L_G, S^{(3,1)})$.
\end{theorem}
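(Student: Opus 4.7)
The strategy is to chase the equality cases through the octopus induction scheme of \zcref{sec:octopus} applied to $\mu = (2,2)$. Since the Young diagram of $(2,2)$ has a unique inner corner---removing it yields $\mu' = (2,1)$---the branching rule (\zcref{prop:branching}) automatically makes \eqref{ineq:octo2} an equality. Hence $\lambda_{\rm min}(L_G, S^{(2,2)}) = \lambda_{\rm min}(L_G, S^{(3,1)})$ if and only if both \eqref{ineq:octo1} and \eqref{ineq:Schur} saturate.

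First I would compute $L_G|_{S^{(2,2)}}$, $L_{H \sqcup \{4\}}|_{S^{(2,2)}}$, and $\Delta|_{S^{(2,2)}}$ as explicit $2 \times 2$ matrices in the basis of the two standard polytabloids ${\bf e}_{t_1}, {\bf e}_{t_2}$, where $t_1 = \ytableaushort{12,34}$ and $t_2 = \ytableaushort{13,24}$, using the rules (I)--(III) from the proof of \zcref{prop:211} together with the straightening algorithm. The branching isomorphism $S^{(2,2)} \mathord\downarrow_{\mathfrak{S}_3} \cong S^{(2,1)}$ then identifies $L_{H \sqcup \{4\}}|_{S^{(2,2)}}$ with $L_H|_{S^{(2,1)}}$, whose two eigenvalues are precisely $\lambda_2(\mathcal{L}_H) \leq \lambda_3(\mathcal{L}_H)$.

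By \zcref{lem:kernel}, equality in \eqref{ineq:octo1} requires a nonzero $w \in \ker \Delta|_{S^{(2,2)}}$ whose restriction to $\mathfrak{S}_3$ is a $\lambda_2(\mathcal{L}_H)$-eigenvector of $L_H$. Since $\dim S^{(2,2)} = 2$, the nontriviality of $\ker \Delta|_{S^{(2,2)}}$ amounts to a single polynomial equation in the edge weights. Simultaneously, \zcref{prop:eigcomp} demands that the corresponding vector ${\bf w} \in \mathbb{R}^3$ (identified through the embedding $S^{(2,1)} \hookrightarrow M^{(2,1)}$) satisfy $\sum_{i=1}^{3} c_{i4} {\bf w}(i) = 0$, and that $[{\bf w}; 0] \in \mathbb{R}^4$ genuinely be a \emph{second} eigenvector of $\mathcal{L}_G$---not just any eigenvector---per the caveat of \zcref{rem:warning}.

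The main obstacle will be disentangling this system of constraints---the determinant condition on $\Delta|_{S^{(2,2)}}$, the eigenvalue-matching between $\ker \Delta|_{S^{(2,2)}}$ and the $\lambda_2(\mathcal{L}_H)$-eigenspace, the Schur vanishing condition, and the genuineness requirement---and showing that they admit a simultaneous solution only for the $4$-cycle with uniform weights. The key leverage should come from vertex symmetry: the induction scheme may be applied after Schur-reducing at any of the four vertices, so each of the conditions above must hold for every choice. I expect this to force all edges corresponding to non-adjacent vertex pairs to carry weight $0$ (producing the $4$-cycle topology) and the four surviving weights to coincide (producing uniformity). For the converse, I would verify directly, either by extending \zcref{ex:4cycle} to $L_G|_{S^{(2,2)}}$ or by substituting the uniform $4$-cycle weights into the $2 \times 2$ matrices of step two, that both equalities indeed hold, and read off the multiplicities $1$ and $2$ from the dimensions of the relevant eigenspaces.
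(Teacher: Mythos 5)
Your framework is the same as the paper's: \eqref{eq:branching}--\eqref{ineq:octo2} are automatic for $\mu=(2,2)$, one computes $X(\Delta|_{S^{(2,2)}})$ in the standard polytabloid basis, and then one intersects the kernel condition from \zcref{lem:kernel} with the conditions of \zcref{prop:eigcomp}, including the ``genuineness'' caveat of \zcref{rem:warning}. The degree-$3$ case is also disposed of exactly as you suggest: $\det X(\Delta|_{S^{(2,2)}})=12s^{-1}c_{14}c_{24}c_{34}>0$ kills the kernel, so only the $4$-cycle and $4$-path survive.

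The gap is in what you identify as the ``key leverage.'' Reducing at every vertex does not force uniformity of the weights. Take the $4$-cycle with $c_{13}=c_{23}=\alpha$, $c_{14}=c_{24}=\beta$, $c_{12}=c_{34}=0$: reducing at vertex $3$ instead of vertex $4$ yields a kernel coefficient vector proportional to $[c_{13}+2c_{23},\,c_{13}-c_{23}]=[3\alpha,0]$ and a Schur condition vector proportional to $[c_{13},\,c_{23}-c_{13}]=[\alpha,0]$, which are already parallel, and the eigenvector condition on the new reduced Laplacian only reproduces $c_{14}=c_{24}$. So symmetry over the reduction vertex gives nothing beyond $\alpha$-within-pair and $\beta$-within-pair equality. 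What actually forces $\alpha=\beta$ is the genuineness check itself: $[-1,1,0,0]^{\sf t}$ is an eigenvector of $\mathcal{L}_G$ with eigenvalue $\alpha+\beta$, but $\mathcal{L}_G$ also has the eigenvalue $\tfrac{1}{2}\bigl(3(\alpha+\beta)-\sqrt{9\alpha^2-14\alpha\beta+9\beta^2}\bigr)$, and $\alpha+\beta$ is the second eigenvalue iff $8(\alpha-\beta)^2\le 0$. This is a spectral computation on $\mathcal{L}_G$ itself, not a consequence of applying the scheme at several vertices. You also omit the degenerate sub-case in which the reduced graph $H$ is $K_3$ with uniform weights, so that $\lambda_2(\mathcal{L}_H)$ has multiplicity $2$ and the ``unique second eigenvector of $\mathcal{L}_H$'' bookkeeping breaks down; the paper handles this separately by showing via the characteristic polynomial of $\mathcal{L}_G$ that \eqref{ineq:Schur} is then strict. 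Your converse direction and the multiplicity count ($X(L_G|_{S^{(2,2)}})=\left[\begin{smallmatrix}2&2\\0&6\end{smallmatrix}\right]$ on the uniform $4$-cycle) are fine as planned.
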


To prove \zcref{thm:existunique} we perform an explicit computation on the polytabloids. 
There are two standard $(2,2)$-tableaux,
\[
t_2:=\begin{smallmatrix}   
\boxed{1}&\boxed{3}\\[-1pt]\boxed{2}&\boxed{4}\end{smallmatrix}
\quad\text{and}\quad
t_3:=\begin{smallmatrix}\boxed{ 1}&\boxed{ 2}\\[-1pt]\boxed{ 3}&\boxed{ 4}\end{smallmatrix}
~,
\] 
and $\{\boldsymbol{e}_{t_2}, \boldsymbol{e}_{t_3}\}$ forms a basis for $S^{(2,2)}$.
The following lemma is the crux of the computation.

\begin{lemma}
\label{lem:four-vertex-two-row-kernel}
Suppose $G$ is a connected $4$-vertex graph, and let $H$ be the reduced graph of $G$ at vertex $4$. 
\begin{enumerate}[label=(\arabic*), wide]
\item \label{item:4deg3} If vertex $4$ has degree $3$ in $G$, \emph{i.e.,} $c_{14}, c_{24}, c_{34}>0$, then $\ker\Delta|_{S^{(2,2)}}=\{0\}$. 
\item \label{item:4deg2}  Otherwise, after relabeling vertices $1,2,3$, assume that $c_{34}=0$ and at least one of $c_{14}$ and $c_{24}$ is positive.
Then the linear combination $\gamma_2\boldsymbol{e}_{t_2}+\gamma_3\boldsymbol{e}_{t_3}\in \ker \Delta |_{S^{(2,2)}}$ if and only if $(c_{14}-c_{24})\gamma_2=(c_{14}+2{c_{24}})\gamma_3$.
\end{enumerate}
\end{lemma}

\begin{proof}
Set $\mu=(2,2)$.
Let $\mathcal E=(\boldsymbol{e}_{t_2},\boldsymbol{e}_{t_3})$.  Using the standard action of transpositions on polytabloids and the straightening algorithm, we obtain the matrix $[\Delta|_{S^\mu}]_{\mathcal E}$.
Again we abbreviate $c_{k4}$ to $c_k$ for $k\in \{1,2,3\}$ and set $s= \sum_{k=1}^3 c_k$.
A direct expansion yields
\[
[\Delta|_{S^\mu}]_{\mathcal E}= s^{-1}
\begin{bmatrix}
c_1^2+3c_1 c_3 - c_1 c_2 + 2c_3^2 + c_2 c_3 & -c_1^2-2c_1 c_2 + 2c_2 c_3 + c_3^2 \\
-c_1^2-2c_1 c_3 + 2c_2 c_3 + c_2^2 & c_1^2+3c_1 c_2 - c_1 c_3 + 2c_2^2 + c_2 c_3
\end{bmatrix}.
\]
One verifies that $\operatorname{tr}[\Delta|_{S^\mu}]_{\mathcal E} = 2s^{-1}(c_1^2+c_2^2+c_3^2+c_1c_2 + c_2 c_3 + c_3 c_1)$ and $\det [\Delta|_{S^\mu}]_{\mathcal E} = 12 s^{-1} c_1 c_2 c_3$.
On the one hand, if $c_1, c_2, c_3>0$, then $\det [\Delta|_{S^\mu}]_{\mathcal E} >0$, which implies that $\ker \Delta|_{S^\mu} = \{0\}$, proving \zcref{item:4deg3}.
On the other hand, suppose $c_3=0$ and at least one of $c_1$ and $c_2$ is positive. Then
\[
[\Delta|_{S^\mu}]_{\mathcal E} = s^{-1} \begin{bmatrix}
c_1^2 - c_1 c_2 & -c_1^2 -2 c_1 c_2 \\ -c_1^2 +c_2^2 & c_1^2 + 3c_1 c_2 + 2c_2^2\end{bmatrix}
=
s^{-1} \begin{bmatrix} c_1  \\ -(c_1+c_2)\end{bmatrix}
\begin{bmatrix} c_1 - c_2 &  -(c_1+2c_2) \end{bmatrix},
\]
a rank-$1$ matrix.
The displayed kernel vector follows by solving $[\Delta|_{S^\mu}]_{\mathcal E}(\gamma_2,\gamma_3)^{\mathsf T}=0$.
\end{proof}

\begin{remark}
The later branchwise theory of \zcref{sec:rigidity} subsumes Lemma~\ref{lem:four-vertex-two-row-kernel}, but we give the direct polytabloid calculation here because this was the route by which the uniformly weighted $4$-cycle was rigorously discovered.  
In the smallest nonlinear Specht module, the rank drop at degree at most $2$ and the surviving kernel direction can be read directly from the displayed $2\times2$ matrix.  The branch-adapted calculation in \zcref{sec:rigidity} explains why this phenomenon persists only in the two-row branch, and why degree at least $3$ restores rigidity.
\end{remark}

\begin{proof}[Proof of \zcref{thm:existunique}]
We now apply the octopus induction scheme from \zcref{sec:octopus}.
Suppose $G$ is a connected $4$-vertex graph whose maximum degree is $3$.
After relabeling the vertices, assume that vertex $4$ has degree $3$.
Lemma~\ref{lem:four-vertex-two-row-kernel}, part~\ref{item:4deg3}, together with Lemma~\ref{lem:kernel}, makes \eqref{ineq:octo1} strict.

It remains to consider connected $4$-vertex graphs of maximum degree $2$, namely a $4$-cycle or a $4$-path.  After relabeling, assume that the edge weights satisfy $c_{14}, c_{24}, c_{13} >0$, $c_{23}\geq 0$, and $c_{12}=c_{34}=0$.
Equality $\lambda_{\min}(L_G,S^{(2,2)})=\lambda_{\min}(L_G,S^{(3,1)})$ requires the following three conditions simultaneously:

\begin{itemize}[wide]
\item Equality is attained in \eqref{ineq:octo1}: By Lemma~\ref{lem:four-vertex-two-row-kernel}, \zcref{item:4deg2}, this holds if and only if $(c_{14}+2c_{24})\boldsymbol{e}_{t_2} + (c_{14}-c_{24}) \boldsymbol{e}_{t_3}$ is an eigenvector of $L_{H\sqcup \{4\}}$ with eigenvalue $\lambda_{\min}(L_{H\sqcup \{4\}}, S^{(2,2)})$.
\item Equality is attained in \eqref{ineq:octo2}: Since $S^{(2,2)}\mathord\downarrow_{\mathfrak{S}_3}\cong S^{(2,1)}$, the minimum in the branching relation is exactly $\lambda_{\min}(L_H,S^{(2,1)})$.
\item Equality is attained in \eqref{ineq:Schur}: By \zcref{prop:eigcomp}, this holds if and only if the graph Laplacian $\mathcal{L}_G$ has a second eigenvector of the form $\left[\begin{matrix} \boldsymbol{w} \\ 0\end{matrix}\right]$ for some nonzero $\boldsymbol{w}\in \mathbb{R}^3$ which satisfies $\boldsymbol{w}\perp \boldsymbol{1}^{(3)}$ and $\sum_{i=1}^3 c_{i4}\boldsymbol{w}(i)=0$. 
In this case, $\boldsymbol{w}$ is automatically a second eigenvector of the reduced graph Laplacian $\mathcal{L}_H$.
\end{itemize}

The key is to bridge the first and third conditions.
Since $S^{(2,2)}\mathord\downarrow_{\mathfrak{S}_3}\cong S^{(2,1)}$, the action of $L_{H\sqcup\{4\}}$ on $S^{(2,2)}$ is isomorphic to the action of $L_H$ on $S^{(2,1)}$.
Under the standard identification of $S^{(2,1)}$ with the mean-zero subspace of $\mathbb{R}^3$, the coefficient vector $\gamma_2 \boldsymbol{e}_{t_2}+\gamma_3 \boldsymbol{e}_{t_3}$ corresponds, up to a nonzero overall scalar, to
$\gamma_2(\boldsymbol{e}_2-\boldsymbol{e}_1)+\gamma_3(\boldsymbol{e}_3-\boldsymbol{e}_1)$.
Thus $\gamma_2\boldsymbol e_{t_2}+\gamma_3\boldsymbol e_{t_3}$ is a minimum eigenvector of $L_{H\sqcup\{4\}}$ if and only if $\gamma_2(\boldsymbol e_2-\boldsymbol e_1)+\gamma_3(\boldsymbol e_3-\boldsymbol e_1)$ is a second eigenvector of $\mathcal L_H$.

We therefore examine the spectrum of
\[
\mathcal{L}_H = \begin{bmatrix}
\tilde{c}_{12}+\tilde{c}_{13} & -\tilde{c}_{12} & -\tilde{c}_{13}\\
-\tilde{c}_{12} & \tilde{c}_{12}+\tilde{c}_{23} & -\tilde{c}_{23}\\
-\tilde{c}_{13} & -\tilde{c}_{23} & \tilde{c}_{13}+\tilde{c}_{23} 
\end{bmatrix}
\]
in the standard basis $\{\boldsymbol{e}_1, \boldsymbol{e}_2, \boldsymbol{e}_3\}$.
Besides a simple eigenvalue $0$, the other two eigenvalues of $\mathcal{L}_H$ are
$\lambda_{\pm} =(\tilde{c}_{12} + \tilde{c}_{23} + \tilde{c}_{13}) \pm \sqrt{\frac{1}{2} \left[ (\tilde{c}_{12}- \tilde{c}_{23})^2 +(\tilde{c}_{23} - \tilde{c}_{13})^2 + (\tilde{c}_{13} - \tilde{c}_{12})^2\right]}$.

We now prove the if and only if characterization stated in the theorem.
There are two alternatives to consider: $\lambda_+ >\lambda_-$, or $\lambda_+ = \lambda_-$.

\underline{The case $\lambda_+ > \lambda_-$:} Then the second eigenspace of $\mathcal{L}_H$ is $1$-dimensional. 
We wish to show that there exists a line of coefficient vectors $\left[\begin{matrix} \gamma_2 \\ \gamma_3\end{matrix}\right] \in \mathbb{R}^2$ such that the following four items hold simultaneously:
\begin{enumerate}[label=(\roman*), wide]
\item \label{item:cri3}  $\left[\begin{matrix} \gamma_2 \\ \gamma_3\end{matrix}\right]$ is a scalar multiple of $\left[\begin{matrix} {c}_{14} +2{c}_{24} \\ {c}_{14} - {c}_{24}\end{matrix}\right]$, by Lemma~\ref{lem:four-vertex-two-row-kernel}, \zcref{item:4deg2};
\item \label{item:extend} $\boldsymbol{w}=(-\gamma_2-\gamma_3)\boldsymbol{e}_1+\gamma_2\boldsymbol{e}_2+\gamma_3\boldsymbol{e}_3$, and $\left[\begin{matrix}\boldsymbol{w}\\0\end{matrix}\right]$ is a second eigenvector of $\mathcal L_G$ with eigenvalue $\lambda_-$, by \zcref{prop:eigcomp};
\item \label{item:cri2} $\sum_{i=1}^3 c_{i4} \boldsymbol{w}(i) = c_{14}(-\gamma_2-\gamma_3) + c_{24} \gamma_2=0$, or equivalently, $\left[\begin{matrix} \gamma_2 \\ \gamma_3\end{matrix}\right]$ is a scalar multiple of $\left[\begin{matrix} {c}_{14} \\ {c}_{24} - {c}_{14} \\ \end{matrix}\right]$,  by \zcref{prop:eigcomp}; 
\item \label{item:cri1} $\boldsymbol{w}=(-\gamma_2-\gamma_3)\boldsymbol{e}_1 + \gamma_2 \boldsymbol{e}_2 + \gamma_3\boldsymbol{e}_3$ is, up to scalar multiples, the unique second eigenvector of $\mathcal{L}_H$ (with eigenvalue $\lambda_-$), by \zcref{prop:eigcomp}.
\end{enumerate}
\zcref{item:cri3} implies equality in \eqref{ineq:octo1}, and \zcref{item:extend} through \zcref{item:cri1} implies equality in \eqref{ineq:Schur}.

Observe that \zcref{item:cri3} and  \zcref{item:cri2} hold simultaneously if and only if ${c}_{14}={c}_{24}$, in which case $\left[\begin{matrix} \gamma_2 \\ \gamma_3\end{matrix}\right]$ is a scalar multiple of $\left[\begin{matrix} 1\\0\end{matrix}\right]$. 
Then consider \zcref{item:cri1}, where we find
\[
\mathcal{L}_H \begin{bmatrix} -1\\1\\0\end{bmatrix} = \begin{bmatrix} \tilde{c}_{12}+\tilde{c}_{13} & -\tilde{c}_{12} & -\tilde{c}_{13} \\
-\tilde{c}_{12} & \tilde{c}_{12}+ \tilde{c}_{23} & -\tilde{c}_{23}\\
-\tilde{c}_{13} & -\tilde{c}_{23} & \tilde{c}_{13} + \tilde{c}_{23}
\end{bmatrix} \begin{bmatrix} -1\\1\\0\end{bmatrix}
=
\begin{bmatrix}
-2\tilde{c}_{12} -\tilde{c}_{13} 
\\
2\tilde{c}_{12} + \tilde{c}_{23}
\\
\tilde{c}_{13} - \tilde{c}_{23}
\end{bmatrix}.
\]
In order for $\left[\begin{matrix} -1\\ 1 \\ 0\end{matrix}\right]$ to be an eigenvector of $\mathcal{L}_H$, we must have $\tilde{c}_{13} = \tilde{c}_{23}$, and the corresponding eigenvalue is $2\tilde{c}_{12} + \tilde{c}_{13}$.

Since ${c}_{12}={c}_{34}=0$, we have $\tilde{c}_{13} ={c}_{13}$, $\tilde{c}_{23} = {c}_{23}$, and $\tilde{c}_{12} = \frac{{c}_{14}{c}_{24}}{{c}_{14} + {c}_{24}}$. 
Thus the edge weights of $G$ are now determined by two parameters: ${c}_{13} = {c}_{23} =:\alpha$ and ${c}_{14}= {c}_{24} =:\beta$.

Finally consider \zcref{item:extend}. We can directly verify that $\left[-1,~1,~0,~0\right]^{\mathsf T}$ is an eigenvector of $\mathcal{L}_G$: 
\[
\mathcal{L}_G \begin{bmatrix} -1\\1\\0\\0\end{bmatrix} = \begin{bmatrix} \alpha+\beta & 0 & -\alpha & -\beta \\ 0 & \alpha+\beta & -\alpha & -\beta \\ -\alpha & -\alpha & 2\alpha & 0 \\ -\beta & -\beta & 0 & 2\beta \end{bmatrix}\begin{bmatrix} -1\\1\\0\\0\end{bmatrix} =(\alpha+\beta) \begin{bmatrix} -1 \\ 1 \\ 0 \\ 0\end{bmatrix}.
\]
To see if it is a second eigenvector of $\mathcal{L}_G$, we find the four eigenvalues of $\mathcal{L}_G$, which are
\[
0, \quad\alpha+\beta,\quad \frac{3(\alpha+\beta) \pm \sqrt{9\alpha^2-14\alpha \beta+9\beta^2}}{2}.
\]
Writing
\[
r_-:=\frac{3(\alpha+\beta)-\sqrt{9\alpha^2-14\alpha\beta+9\beta^2}}{2},
\]
we have
\[
(\alpha+\beta)-r_-
=\frac{\sqrt{9\alpha^2-14\alpha\beta+9\beta^2}-(\alpha+\beta)}{2}\geq 0,
\]
because the difference of the squares of the two nonnegative quantities in the numerator is $8(\alpha-\beta)^2$.
Moreover, equality holds if and only if $\alpha=\beta$.
Hence $\lambda_2(\mathcal{L}_G)=\alpha+\beta$ if and only if $\alpha=\beta$.
Thus the only graphs $G$ which satisfy \zcref{item:cri3} through \zcref{item:cri1} are the $4$-cycles with uniform weights $c_{13}=c_{23}=c_{14}=c_{24}>0$ (and $c_{12}=c_{34}=0$).

Since a common rescaling of all edge weights rescales the operator without changing eigenspace dimensions, it suffices to verify the multiplicities on the $4$-cycle with simple weights. 
In \zcref{ex:4cycle} we identified the two second eigenvectors of $\mathcal{L}_G$ (equivalently, the eigenvectors of $L_G|_{S^{(3,1)}}$ with the minimum eigenvalue $\lambda_{\min}(L_G, S^{(3,1)})=2$).
Meanwhile, in the same ordered basis $\mathcal E=(\boldsymbol e_{t_2},\boldsymbol e_{t_3})$, direct calculation gives
\[
[L_G|_{S^{(2,2)}}]_{\mathcal E}= \begin{bmatrix} 2 & 2\\ 0 & 6\end{bmatrix}
\]
in the basis $\{\boldsymbol{e}_{t_2}, \boldsymbol{e}_{t_3}\}$.
Thus $\lambda_{\min}(L_G, S^{(2,2)}) = 2$ with corresponding eigenvector 
\[
\boldsymbol{e}_{t_2} =
\begin{matrix} 1&3\\[-1pt] 2&4\end{matrix}
~-~
\begin{matrix} 2& 3\\[-1pt] 1& 4\end{matrix}
~-~
\begin{matrix} 1& 4\\[-1pt] 2& 3\end{matrix}
~+~
\begin{matrix}2& 4\\[-1pt] 1& 3\end{matrix}
~.
\]
Upon identifying each tabloid by the $2$-subset appearing in row $2$, we see that $\boldsymbol{e}_{t_2}$ matches the vector $[0,1,-1,-1,1,0]^{\mathsf T}$ of \eqref{eq:3rdA2eig}.

\underline{The case $\lambda_+ =\lambda_-$:} This implies the equality $\tilde{c}_{12} = \tilde{c}_{23} = \tilde{c}_{13}$, namely: $H$ is the complete graph $K_3$ with uniform weights. 
We claim that in this setting inequality \eqref{ineq:Schur} is strict, \emph{i.e.,} $\lambda_2(\mathcal{L}_H) > \lambda_2(\mathcal{L}_G)$.

As in the previous case, we have $\tilde{c}_{13} ={c}_{13}$, $\tilde{c}_{23} = {c}_{23}$, and $\tilde{c}_{12} = \frac{{c}_{14}{c}_{24}}{{c}_{14} + {c}_{24}}$. The equality of the tilded weights thus reads ${c}_{13}={c}_{23} = \frac{{c}_{14}{c}_{24}}{{c}_{14} + {c}_{24}}$.
Without loss of generality set ${c}_{13}={c}_{23}=1$, $b={c}_{14}$, and $d={c}_{24}$, with $1=\frac{bd}{b+d}$.
Since $b,d>0$, this last equation implies $b=\frac{d}{d-1}$ and $d>1$.
Note that $H$ is now the complete graph $K_3$ with simple weights, so $\lambda_2(\mathcal{L}_H)=3$.

A direct determinant calculation gives the characteristic polynomial
$\det(\lambda I-\mathcal{L}_G)=
\lambda(\lambda-3) Q(\lambda)$, where $$Q(\lambda)=\lambda^{2}+\frac{-2d^{2}-d+1}{d-1}\lambda+\frac{4d^{2}}{d-1}.$$ The eigenvalues of $\mathcal{L}_G$ are $0$, $3$, and the two roots of the quadratic polynomial $Q$. We compute $Q(0)=\frac{4d^2}{d-1}$ and $Q(3)=\frac{-2(d^{2}-3d+3)}{d-1}$. Note that $d^{2}-3d+3 = (d-\frac{3}{2})^2 + \frac{3}{4}\geq \frac{3}{4}$, so $-2(d^{2}-3d+3)<0$. Since $d>1$, we have $Q(0)>0$ and $Q(3)<0$.  By continuity, $Q(\lambda')=0$ for some $0<\lambda'<3$. Therefore $\lambda_2(\mathcal{L}_H)=3>\lambda'\geq \lambda_2(\mathcal{L}_G)$, which proves the claim.
\end{proof}

Next we turn to the shape $(2,1^2)$.
We show that no analogous equality occurs in this branch.

\begin{proposition}
\label{prop:211}
Suppose $G$ is a connected $4$-vertex graph, and $H$ is the reduced graph of $G$ at vertex $4$.
Then $\lambda_{\min}(L_G,S^{(2,1^2)})>\lambda_{\min}(L_{H\sqcup\{4\}},S^{(2,1^2)})$.
Hence by the octopus induction scheme of \zcref{sec:octopus}, $\lambda_{\min}(L_G,S^{(2,1^2)})>\lambda_{\min}(L_G,S^{(3,1)})$.
\end{proposition}
\begin{proof}
By the branching rule,
\[
S^{(2,1^2)}\mathord\downarrow_{\mathfrak{S}_3}
\cong S^{(2,1)}\oplus S^{(1^3)}.
\]
The case $n=3$ excludes the sign summand $S^{(1^3)}$ from the minimum eigenspace of $L_{H\sqcup\{4\}}$, so every minimum eigenvector lies in the selected branch
$\mathcal{B}_{(2,1^2)}(4)\cong S^{(2,1)}$.
Proposition~\ref{prop:hook-rigidity}, proved in \zcref{sec:rigidity}, gives
$
\ker\Delta\cap\mathcal{B}_{(2,1^2)}(4)=\{0\}$.
The strict inequality in \eqref{ineq:octo1} follows from Lemma~\ref{lem:kernel}, and the second conclusion follows from the remaining lines of the octopus induction scheme.
\end{proof}

\begin{remark}[Why no second polytabloid matrix is needed]
The two four-vertex sectors, $(2,2)$ and $(2,1^2)$, now have complementary roles.
The explicit $(2,2)$ matrix exhibits the rank drop and exceptional direction leading to the uniformly weighted $4$-cycle.
The $(2,1^2)$ sector serves the complementary purpose of excluding any competing equality mode.
\zcref{sec:poly11} realizes the hook branch $(n-2,1^2)$ as an invariant exterior-square subspace and proves its rigidity for every positive degree.
Repeating a separate hook-polytabloid matrix calculation here would therefore duplicate the general argument without revealing a second exceptional mechanism.
\end{remark}

Together, \zcref{prop:signrep}, \zcref{thm:existunique}, and \zcref{prop:211} establish \zcref{thm:main}, \zcref{item:4cycle}.

\subsection{The case $n=5$}

In this subsection we prove

\begin{proposition}
\label{prop:5}
Let $G$ be a $5$-vertex connected graph. Then for every $\mu\vdash 5$, $\mu \neq (5), (4,1)$, we have
$
\lambda_{\min}(L_G, S^\mu) > \lambda_{\min}(L_G, S^{(4,1)}).
$
\end{proposition}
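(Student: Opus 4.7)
My plan is to apply the octopus induction scheme of \zcref{sec:octopus}: pick a vertex of $G$, relabel it as $n=5$, reduce $G$ to the $4$-vertex graph $H$ via \eqref{eq:redweights}, and for each $\mu\vdash 5$ with $\mu\neq (5),(4,1)$ argue that at least one of \eqref{ineq:octo1}, \eqref{ineq:octo2}, \eqref{ineq:Schur} is strict. By the branching rule (\zcref{prop:branching}), the relevant restrictions are $S^{(3,2)}\mathord\downarrow_{\mathfrak{S}_4}\cong S^{(3,1)}\oplus S^{(2,2)}$, $S^{(3,1,1)}\mathord\downarrow_{\mathfrak{S}_4}\cong S^{(3,1)}\oplus S^{(2,1,1)}$, $S^{(2,2,1)}\mathord\downarrow_{\mathfrak{S}_4}\cong S^{(2,2)}\oplus S^{(2,1,1)}$, $S^{(2,1,1,1)}\mathord\downarrow_{\mathfrak{S}_4}\cong S^{(2,1,1)}\oplus S^{(1^4)}$, and $S^{(1^5)}\mathord\downarrow_{\mathfrak{S}_4}\cong S^{(1^4)}$. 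The $n=4$ base case---assembled from \zcref{prop:211}, \zcref{thm:existunique}, and \zcref{prop:signrep}---records exactly which descendants are strict over $S^{(3,1)}$ on $H$, so strictness in \eqref{ineq:octo2} can be read off the list.

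The partitions $(1^5)$ and $(2,1,1,1)$ are immediate. For $(1^5)$ one applies \zcref{prop:signrep} to $L_G$ directly. For $(2,1,1,1)$, both descendants $(2,1,1)$ and $(1^4)$ are strict over $(3,1)$ on \emph{every} connected $4$-vertex graph $H$---including the $4$-cycle with uniform weights, by \zcref{thm:main}, \zcref{item:4cycle}---so \eqref{ineq:octo2} is strict unconditionally. For $\mu=(2,2,1)$, the $(2,1,1)$-descendant is always strict, but the $(2,2)$-descendant fails to be strict exactly when $H$ is the $4$-cycle with uniform weights. I would dispose of that subcase by using \eqref{eq:redweights} to show that for a generic $5$-vertex connected $G$ one can always select a different reduction vertex whose $H$ avoids that pathology, then enumerate by hand the few rigid configurations of $G$ for which no such choice exists.

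The genuinely hard partitions are $\mu=(3,2)$ and $\mu=(3,1,1)$, because each has $(3,1)$ as a descendant, which automatically gives equality in \eqref{ineq:octo2}. Here I must extract strictness from \eqref{ineq:octo1} or \eqref{ineq:Schur}. The cleaner route is \zcref{prop:eigcomp}: pick the removed vertex so that no 2nd eigenvector of $\mathcal{L}_G$ both vanishes there and satisfies \eqref{eq:cw}, which forces \eqref{ineq:Schur} to be strict. This Schur argument fails only on exceptional families---the $5$-cycle with carefully tuned weights being the natural candidate, as signalled by \zcref{rem:warning} and \zcref{prop:5cycle}---where every vertex is simultaneously a zero of some qualifying 2nd eigenvector of $\mathcal{L}_G$. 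For those exceptions I would fall back on \zcref{lem:kernel} and compute $\ker \Delta|_{S^{(3,2)}}$ and $\ker \Delta|_{S^{(3,1,1)}}$ by explicit polytabloid straightening, in the spirit of the calculation in \zcref{lem:n=4_2ker}, and show that no nonzero kernel element is a $\lambda_{\min}(L_{H\sqcup\{5\}},S^\mu)$-eigenvector sitting in the induced $S^{(3,1)}\mathord\uparrow^{\mathfrak{S}_5}$ component. The principal technical obstacle is carrying out this polytabloid bookkeeping cleanly inside the $5$- and $6$-dimensional Specht modules $S^{(3,2)}$ and $S^{(3,1,1)}$, and verifying that every exceptional $5$-vertex graph is covered by at least one of the Schur or kernel arguments.
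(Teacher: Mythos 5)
Your overall scaffolding matches the paper's: reduce at a chosen vertex, use the branching data to see that only $(3,2)$, $(3,1^2)$, and $(2^2,1)$ require work beyond \zcref{prop:signrep} and \zcref{prop:211}, and then force strictness in \eqref{ineq:octo1} or \eqref{ineq:Schur}. Two remarks on the routes you choose. For $(2^2,1)$ the paper does not switch reduction vertices; it characterizes exactly which $G$ reduce at vertex $5$ to the uniform $4$-cycle (\zcref{lem:5reducedcycle}) and then shows by a direct characteristic-polynomial computation that for every such $G$ the inequality \eqref{ineq:Schur} is already strict (\zcref{lem:exceptionH}), which disposes of \emph{all} partitions at once for that family; your vertex-switching idea is plausible there but is left as an unverified enumeration. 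Also, be careful with the claim that the Schur argument fails only on exceptional families: by \zcref{prop:eigcomp}, \eqref{ineq:Schur} saturates for \emph{every} choice of removed vertex on sufficiently symmetric graphs such as $K_5$, whose 2nd eigenspace is large enough to contain, for each vertex, a vector vanishing there and satisfying \eqref{eq:cw}. Those cases are rescued only because the removed vertex has degree $\geq 3$, whence $\ker\Delta|_{S^{(3,2)}}=\{0\}$ by \zcref{thm:kertriv}.

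The genuine gap is your fallback for $\mu=(3,2)$ on the $5$-cycle. There the removed vertex has degree $2$, so by \zcref{prop:2straighten} the space $\ker\Delta|_{S^{(3,2)}}\cap S^{(3,1)}\mathord\uparrow^{\mathfrak{S}_5}$ is one-dimensional rather than zero, and for suitable edge weights its generator genuinely is an eigenvector of $L_{H\sqcup\{5\}}$ with the minimum eigenvalue: the compatibility equations \eqref{eq:w1}--\eqref{eq:w4} do admit solutions. Hence the statement you propose to prove in the fallback (``no nonzero kernel element is a $\lambda_{\min}$-eigenvector in the induced component'') is false for some $5$-cycles, and \eqref{ineq:octo1} cannot be made strict on its own; likewise \eqref{ineq:Schur} can saturate on its own. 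What the paper actually proves (\zcref{prop:5cycle}) is the weaker but sufficient claim that the two saturations are mutually exclusive: assuming both, the weights are pinned down ($\delta=-\tfrac{1}{3}$ in \zcref{lem:admissible}), and for that family an intermediate-value argument shows $\lambda_2(\mathcal{L}_H)>\lambda_2(\mathcal{L}_G)$ after all, a contradiction. Your proposal never contemplates this combined argument, so as written it does not close the $5$-cycle case.
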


We begin with the cases settled directly by the four-vertex result. 
If $\mu=(1^5)$, then \zcref{prop:5} holds by \zcref{prop:signrep}.
If $\mu=(2,1^3)$, then by the branching rule (\zcref{prop:branching}), $\mu' = \mu-\Box$ is $(2,1^2)$ or $(1^4)$.
Using \zcref{prop:211} and \zcref{prop:signrep}, respectively, we deduce that $\min_{\mu' = \mu-\Box} \lambda_{\min}(L_H, S^{\mu'}) > \lambda_{\min}(L_H, S^{(3,1)})$, \emph{i.e.,} \eqref{ineq:octo2} is a strict inequality, and hence \zcref{prop:5} holds.

\begin{table}[htp]
\centering
\small
\renewcommand{\arraystretch}{1.2}
\begin{tabularx}{\textwidth}{>{\centering\arraybackslash}p{0.16\textwidth}|*{3}{>{\raggedright\arraybackslash}X}}
$\mu$  & $(3,2)$ & $(3,1^2)$ & $(2^2,1)$ \\ \hline
$\mu' = \mu-\Box$ & $(2,2), (3,1)$ & $(2,1^2), (3,1)$ & $(2,1^2), (2,2)$ \\ \hline
Equality in \eqref{ineq:octo2}? & Yes; attained at $\mu'=(3,1)$. & Yes; attained at $\mu'=(3,1)$. & No, unless $H$ is the cycle graph with uniform weights (\zcref{thm:existunique}); in that case equality is attained at $\mu'=(2,2)$.
\end{tabularx}
\caption{Status of \eqref{ineq:octo2} for the irreducible representations $\mu \vdash 5$ of interest}
\label{table:5status}
\end{table}

So it remains to check $\mu = (3,2), (3,1^2), (2^2,1)$. \zcref{table:5status} gives a summary of their restrictions to $\mathfrak{S}_{n-1}$.
Note that when $H$ is the $4$-cycle with uniform weights and $\mu=(3,2)$, $\min_{\mu'=\mu-\Box}\lambda_{\min}(L_H, S^{\mu'})$ is attained at both $\mu'=(2,2)$ and $\mu'=(3,1)$.

The next two lemmas explain why this exception for $H$ does not obstruct our inductive argument from $n=4$ to $n=5$.

\begin{lemma}
\label{lem:5reducedcycle}
Let $G$ be a connected weighted graph on $\{1,2,3,4,5\}$, with the convention that $c_{ij}=0$ for a missing edge, and let $H$ be its reduction at vertex $5$.  The following are equivalent:
\begin{enumerate}[label=(\alph*)]
\item \label{item:5red1} The reduced graph $H$ is the $4$-cycle
$1\! -\!2\! -\!3\! -\!4\! -\!1$ with a common edge weight $w>0$.
\item \label{item:5red2} After relabeling $1,2,3,4$ by a symmetry of the $4$-cycle (a dihedral relabeling), there exist
$q,r\geq0$ with $q+r>0$ such that
\begin{align*}
&c_{12}=c_{23}=c_{14}=w,\qquad
c_{35}=q,\qquad c_{45}=r,\qquad
c_{34}=w-\frac{qr}{q+r}\geq0,\\
&c_{13}=c_{24}=c_{15}=c_{25}=0.
\end{align*}
Equivalently, the support graph of $G$ is contained in the following schematic.  The dashed edges may be absent, but at least one of $35$ and $45$ is present.
\begin{center}
\GraphFiveVertexException
\end{center}
\end{enumerate}
In particular, in the exceptional family the distinguished vertex $5$ has degree $1$ or $2$ in the support graph.
\end{lemma}

\begin{proof}
Assume first \zcref{item:5red1}.  Since all terms in \eqref{eq:redweights} are nonnegative, the two missing diagonals of the reduced cycle give
\[
0=\widetilde c_{13}=c_{13}+\frac{c_{15}c_{35}}{s},
\qquad
0=\widetilde c_{24}=c_{24}+\frac{c_{25}c_{45}}{s},
\qquad
s:=\sum_{i=1}^4c_{i5}>0.
\]
Hence $c_{13}=c_{24}=0$, $c_{15}c_{35}=0$, and $c_{25}c_{45}=0$.  Thus the positive neighbors of vertex $5$ contain neither opposite pair $\{1,3\}$ nor $\{2,4\}$ of the cycle.  They are therefore contained in one adjacent pair.  After a dihedral relabeling, we may take that pair to be $\{3,4\}$, so $c_{15}=c_{25}=0$.  Put $q=c_{35}$ and $r=c_{45}$; then $q,r\geq0$ and $q+r=s>0$.

The three cycle edges not joining the two possible positive neighbors of vertex $5$ receive no Schur correction, and hence
\[
c_{12}=\widetilde c_{12}=w,\qquad
c_{23}=\widetilde c_{23}=w,\qquad
c_{14}=\widetilde c_{14}=w.
\]
For the remaining cycle edge, \eqref{eq:redweights} gives
\[
w=\widetilde c_{34}=c_{34}+\frac{qr}{q+r},
\]
so $c_{34}=w-qr/(q+r)\geq0$.  This proves \zcref{item:5red2}.

Conversely, suppose \zcref{item:5red2} holds.  Here $s=q+r$, and direct substitution in \eqref{eq:redweights} gives
\[
\widetilde c_{12}=\widetilde c_{23}=\widetilde c_{34}=\widetilde c_{14}=w,
\qquad
\widetilde c_{13}=\widetilde c_{24}=0.
\]
Thus $H$ is the uniformly weighted $4$-cycle, proving \zcref{item:5red1}.
\end{proof}

\begin{lemma}
\label{lem:exceptionH}
Let $G$ belong to the exceptional family in \zcref{lem:5reducedcycle}, \zcref{item:5red2}, and let $H$ be the reduced graph at vertex $5$. Then $\lambda_2(\mathcal{L}_H)>\lambda_2(\mathcal{L}_G)$, \emph{i.e.,} \eqref{ineq:Schur} fails to saturate to equality.
Hence by the octopus induction scheme of \zcref{sec:octopus}, $\lambda_{\min}(L_G, S^\mu) > \lambda_{\min}(L_G, S^{(4,1)})$ for every $\mu \vdash 5$, $\mu\neq (5), (4,1)$.
\end{lemma}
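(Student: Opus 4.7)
The plan is to first establish the strict Laplacian inequality $\lambda_2(\mathcal{L}_H) > \lambda_2(\mathcal{L}_G)$, and then feed it into the octopus induction scheme to dispatch every non-trivial $\mu\vdash 5$ at once. I would begin by unpacking \zcref{lem:5reducedcycle}\zcref{item:5red2}: up to relabeling vertices $1,\ldots,4$, I may set $\alpha := c_{12}=c_{23}=c_{14}$, $\beta := c_{35}$, $\gamma := c_{45}$, with $c_{15}=c_{25}=c_{13}=c_{24}=0$, $c_{34}=\alpha - \frac{\beta\gamma}{\beta+\gamma}$, and $\beta+\gamma>0$. Since $H$ is then the $4$-cycle with uniform weight $\alpha$, one has $\lambda_2(\mathcal{L}_H)=2\alpha$, with $2$-dimensional eigenspace spanned by $[1,0,-1,0]^{\sf t}$ and $[0,1,0,-1]^{\sf t}$. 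The crucial structural observation is that the linear condition $\sum_{i=1}^4 c_{i5}\mathbf{w}(i) = \beta\mathbf{w}(3)+\gamma\mathbf{w}(4)=0$ cuts this $2$-dimensional space down to a $1$-dimensional one, leaving a nonzero ``bad'' direction along which $y^*(\mathbf{w})$ is nonzero. A concrete choice is $\mathbf{w}':=[\beta,\gamma,-\beta,-\gamma]^{\sf t}$, for which $y^*(\mathbf{w}') = \frac{1}{\beta+\gamma}\sum_{i=1}^4 c_{i5}\mathbf{w}'(i) = -\frac{\beta^2+\gamma^2}{\beta+\gamma} \neq 0$.

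With $\mathbf{w}'$ in hand, I would apply \zcref{lem:Schur}: the lifted vector $v:=[\mathbf{w}';y^*]^{\sf t}\in\mathbb{R}^5$ satisfies $v^{\sf t}\mathcal{L}_G v = (\mathbf{w}')^{\sf t}\mathcal{L}_H\mathbf{w}' = 2\alpha\|\mathbf{w}'\|^2$. After subtracting its projection $\frac{y^*}{5}\mathbf{1}^{(5)}$ onto $\mathbf{1}^{(5)}$, I obtain $\tilde{\mathbf{u}} = v-\frac{y^*}{5}\mathbf{1}^{(5)}\in(\mathbf{1}^{(5)})^\perp$, which preserves the quadratic form since $\mathcal{L}_G\mathbf{1}^{(5)}=0$, but whose norm grows by Pythagoras to $\|\tilde{\mathbf{u}}\|^2 = \|\mathbf{w}'\|^2 + \frac{4}{5}(y^*)^2 > \|\mathbf{w}'\|^2$. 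Rayleigh's variational principle then yields
\begin{equation*}
\lambda_2(\mathcal{L}_G) \;\leq\; \frac{\tilde{\mathbf{u}}^{\sf t}\mathcal{L}_G\tilde{\mathbf{u}}}{\|\tilde{\mathbf{u}}\|^2} \;=\; \frac{2\alpha\|\mathbf{w}'\|^2}{\|\mathbf{w}'\|^2+\tfrac{4}{5}(y^*)^2} \;<\; 2\alpha \;=\; \lambda_2(\mathcal{L}_H),
\end{equation*}
which is the desired strict form of \eqref{ineq:Schur}. Threading this into the octopus induction chain then gives, for each $\mu\vdash 5$ with $\mu\neq (5),(4,1)$, $\lambda_{\min}(L_G,S^\mu)\geq\min_{\mu'=\mu-\Box}\lambda_{\min}(L_H,S^{\mu'})\geq\lambda_{\min}(L_H,S^{(3,1)}) > \lambda_{\min}(L_G,S^{(4,1)})$, where the final strict step is exactly the Laplacian inequality just established, translated through the isomorphism $L_G|_{S^{(4,1)}}\cong\mathcal{L}_G|_{(\mathbf{1}^{(5)})^\perp}$. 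Even when \eqref{ineq:octo2} saturates at $\mu'=(2,2)$ for $\mu\in\{(3,2),(2^2,1)\}$ (as \zcref{table:5status} anticipates, since $H$ is the uniform $4$-cycle), the strict \eqref{ineq:Schur} alone suffices to make the composed inequality strict.

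The only point demanding any care—and the main conceptual obstacle—is choosing the correct direction in the $2$-dimensional $2$nd eigenspace of $\mathcal{L}_H$. The natural first instinct, motivated by \zcref{prop:eigcomp}, is to pick the eigenvector $\mathbf{w}$ that \emph{does} satisfy $\sum c_{i5}\mathbf{w}(i)=0$ (explicitly $\mathbf{w}=[\gamma,-\beta,-\gamma,\beta]^{\sf t}$ up to scaling) and to ask whether $[\mathbf{w};0]^{\sf t}$ serves as a $2$nd eigenvector of $\mathcal{L}_G$. One can verify directly that $[\mathbf{w};0]^{\sf t}$ \emph{is} an eigenvector of $\mathcal{L}_G$ with eigenvalue $2\alpha$, so na\"ively one might fear that equality holds; this is exactly the situation warned about in \zcref{rem:warning}, where the lifted eigenvalue $2\alpha$ fails to be the \emph{second} eigenvalue of $\mathcal{L}_G$. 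Exploiting the orthogonal direction $\mathbf{w}'$—the one violating the weight-vanishing condition—sidesteps this trap by producing a \emph{lower} Rayleigh quotient and thereby certifying the strict spectral gap that powers the rest of the induction.
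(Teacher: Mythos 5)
Your argument is correct, and it reaches the key inequality $\lambda_2(\mathcal{L}_H)>\lambda_2(\mathcal{L}_G)$ by a genuinely different route than the paper. The paper normalizes $c_{12}=c_{23}=c_{14}=1$, computes the full characteristic polynomial $\det(\lambda I-\mathcal{L}_G)=\lambda(\lambda-2)P(\lambda)$ for an explicit cubic $P$, and locates a root of $P$ in $(0,2)$ via the sign change $P(0)<0<P(2)$ and the intermediate value theorem. You instead pick the direction $\mathbf{w}'=[\beta,\gamma,-\beta,-\gamma]^{\sf t}$ in the $2$-dimensional second eigenspace of the uniform $4$-cycle $\mathcal{L}_H$ that \emph{violates} the weight-vanishing condition, lift it by the Schur-complement minimizer $y^*=-\frac{\beta^2+\gamma^2}{\beta+\gamma}\neq 0$ of \zcref{lem:Schur}, project off $\mathbf{1}^{(5)}$, and observe that the resulting Rayleigh quotient $\frac{2\alpha\|\mathbf{w}'\|^2}{\|\mathbf{w}'\|^2+\frac{4}{5}(y^*)^2}$ is strictly below $2\alpha$. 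I checked the computations ($y^*$, the norm identity, and the transfer $v^{\sf t}\mathcal{L}_G v=(\mathbf{w}')^{\sf t}\mathcal{L}_H\mathbf{w}'$), and they are all correct; your side remark that $[\gamma,-\beta,-\gamma,\beta,0]^{\sf t}$ is a $2\alpha$-eigenvector of $\mathcal{L}_G$ but not a second eigenvector is also accurate and is exactly the pitfall of \zcref{rem:warning}. What each approach buys: yours avoids the determinant computation entirely, is shorter, and isolates a reusable principle (any second eigenvector of $\mathcal{L}_H$ with $y^*\neq 0$ certifies strictness of \eqref{ineq:Schur}, effectively running the proof of \zcref{prop:eigcomp} in reverse on one test vector); the paper's computation yields more spectral information about $\mathcal{L}_G$ as a byproduct. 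The final paragraph threading the strict \eqref{ineq:Schur} through the octopus chain is the same as the paper's, and you correctly note that saturation of \eqref{ineq:octo2} at $\mu'=(2,2)$ is harmless since the last link is strict.
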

\begin{proof}
Divide every edge weight by the common reduced-cycle weight $w$; this common rescaling preserves all strict spectral comparisons.  Reuse $c_{ij}$ for the rescaled weights, and set $q=c_{35}$ and $r=c_{45}$.  Then
$q,r\geq0$, $q+r>0$, and $1-qr/(q+r)\geq0$, while
${c}_{12}={c}_{23}={c}_{14}=1$ and ${c}_{34}=1-\frac{qr}{q+r}$. 
We already found that $\lambda_2(\mathcal{L}_H)=2$ in \zcref{ex:4cycle}. 
Meanwhile
\[
\mathcal{L}_G
=
\begin{bmatrix}
2 & -1 & 0 & -1 & 0\\
-1 & 2 & -1 & 0 & 0 \\
0 & -1 &2+\frac{q^2}{q+r} & -1+\frac{qr}{q+r} & -q \\
-1 & 0 & -1+\frac{qr}{q+r} & 2+\frac{r^2}{q+r} & -r \\
0 & 0 & -q & -r & q+r
\end{bmatrix}.
\]
A direct determinant calculation gives
$\det(\lambda I-\mathcal{L}_G)=\lambda(\lambda-2)P(\lambda)$, where
\[
P(\lambda) = \lambda^3 - \frac{2(q^2+qr+r^2+3q+3r)}{q+r} \lambda^2 + \frac{2(5q^2+7qr+5r^2+4q+4r)}{q+r}\lambda - 10(q+r).
\]
Besides $0$ and $2$, the other eigenvalues of $\mathcal{L}_G$ are the three roots of the cubic polynomial $P$.
Now $P(0)= -10(q+r)$ and $P(2)= \frac{2(q^2+r^2)}{q+r}$. Since $G$ is connected, at least one of $q$ and $r$ must be positive, so $P(0)<0$ and $P(2)>0$. By continuity of $P$, the intermediate value theorem gives a root $\lambda_*\in(0,2)$.  Hence $\lambda_2(\mathcal L_G)\leq\lambda_*<2=\lambda_2(\mathcal L_H)$.
\end{proof}

\begin{remark}[Continuation of \zcref{rem:warning}]
\label{rem:subtlewarning}
We give a second example illustrating \zcref{rem:warning}.
Let $G$ be as in \zcref{lem:5reducedcycle}, \zcref{item:5red2}, and set $c_{12}=c_{23}=c_{14}=c_{34}=c_{45}=1$ and $c_{35}=0$.
(Thus $G$ is the $4$-cycle together with the additional edge $45$, and all present edges have weight $1$.)
Let $\boldsymbol{w}=[1,0,-1,0]^{\mathsf T} \in \mathbb{R}^4$.
Then $\boldsymbol{w}$ is a second eigenvector of $\mathcal{L}_H$ (with eigenvalue $2$), $\sum_{i=1}^4 c_{i5} \boldsymbol{w}(i) = 0$, and furthermore $\left[\begin{matrix} \boldsymbol{w}\\0\end{matrix}\right] \in \mathbb{R}^5$ is an eigenvector of $\mathcal{L}_G$ with eigenvalue $2$.
But the preceding proof of \zcref{lem:exceptionH} shows that $\lambda_2(\mathcal{L}_G)<2$.
\end{remark}

The exceptional reduction is now isolated: if the chosen removed vertex produces a uniformly weighted $4$-cycle, \zcref{lem:exceptionH} makes \eqref{ineq:Schur} strict.  Otherwise, \zcref{table:5status} leaves only $\mu=(3,2)$ and $\mu=(3,1^2)$, with the minimum attained in the $S^{(3,1)}$ branch $\mathcal{B}_\mu(5)$.

The remaining argument uses the next three propositions and chooses the removed vertex according to the support graph.  If the maximum degree is at least $3$, we relabel a vertex of degree at least $3$ as vertex $5$; by the final sentence of \zcref{lem:5reducedcycle}, this choice cannot belong to the exceptional family.  If the maximum degree is at most $2$, connectedness makes the support graph a path or a cycle.  In the path case we choose an endpoint as vertex $5$.  In the cycle case we choose a cycle vertex as vertex $5$ and then split according to whether its reduction is the exceptional uniformly weighted $4$-cycle.

We now prove the three propositions.  The first is the five-vertex application of the branchwise rigidity statements proved in \zcref{sec:rigidity}.

\begin{proposition}
\label{prop:5octo1}
Let $G$ be a connected $5$-vertex graph different from those defined in \zcref{lem:5reducedcycle}, \zcref{item:5red2}, and $H$ be the reduced graph of $G$ at vertex $5$.
Then $\lambda_{\min}(L_G, S^\mu) > \lambda_{\min}(L_{H\sqcup\{5\}},S^\mu)$ (\emph{i.e.,\@} \eqref{ineq:octo1} is a strict inequality) holds in the following scenarios:
\begin{itemize}
\item Vertex $5$ of $G$ has degree $\geq 3$, and $\mu=(3,2)$.
\item $\mu=(3,1^2)$.
\end{itemize}
\end{proposition}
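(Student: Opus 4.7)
Both conclusions will follow by showing that the inequality~\eqref{ineq:octo1} is strict, via \zcref{lem:kernel}: equality would require a nonzero $w \in \ker \Delta|_{S^\mu}$ that simultaneously realizes $\lambda_{\rm min}(L_{H\sqcup\{5\}}, S^\mu)$. Because $G$ is not the exceptional graph of \zcref{lem:5reducedcycle}, \zcref{item:5red2}, the reduced graph $H$ is not the $4$-cycle with uniform weights, so \zcref{thm:existunique} and \zcref{prop:211} applied to $H$ yield $\lambda_{\rm min}(L_H, S^{\mu'}) > \lambda_{\rm min}(L_H, S^{(3,1)})$ for every $\mu' \vdash 4$ with $\mu' \neq (4), (3,1)$. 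By the branching rule, $\mu - \Box \in \{(2,2), (3,1)\}$ when $\mu=(3,2)$ and $\mu - \Box \in \{(2, 1^2), (3,1)\}$ when $\mu = (3,1^2)$, and in both cases the minimum $\min_{\mu'=\mu-\Box} \lambda_{\rm min}(L_H, S^{\mu'})$ is uniquely attained at $\mu' = (3,1)$. Per the observation preceding \zcref{prop:211}, any minimum eigenvector of $L_{H\sqcup\{5\}}|_{S^\mu}$ must lie in the subspace $W_\mu \subseteq S^\mu$ spanned by $\{\mathbf{e}_t : t \text{ is a standard } \mu\text{-tableau with } t - \boxed{5} \text{ of shape } (3,1)\}$. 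It suffices to show $\ker \Delta|_{S^\mu} \cap W_\mu = \{0\}$.

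For $\mu = (3, 1^2)$ the six standard tableaux split as follows: three place $5$ in cell $(3,1)$ and span $W_{(3,1^2)}$, while the other three place $5$ in cell $(1,3)$. The plan is to compute $\Delta \mathbf{e}_t$ for each basis element by applying the three basic rules of \zcref{prop:211}---same column, same row via Garnir straightening, distinct row and column---to obtain an explicit $6 \times 6$ matrix $X(\Delta|_{S^{(3,1^2)}})$ with entries rational in $c_{15}, c_{25}, c_{35}, c_{45}$. I then aim to show that any $w \in \ker \Delta|_{S^{(3,1^2)}}$ whose coefficients $\gamma_t$ vanish on the $(1,3)$-tableaux (i.e.\ $w \in W_{(3,1^2)}$) must itself vanish, mirroring the argument at the end of the proof of \zcref{prop:211} where the single kernel direction had an unavoidable component in the sign-type summand.

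For $\mu = (3,2)$ with vertex $5$ of degree $\geq 3$, the five standard tableaux split as three placing $5$ in $(2,2)$ and spanning $W_{(3,2)}$, and two placing $5$ in $(1,3)$. The analogous $5 \times 5$ computation produces $X(\Delta|_{S^{(3,2)}})$. The degree-$\geq 3$ hypothesis is meant to play the role played by the degree-$3$ hypothesis in \zcref{lem:n=4_2ker}, \zcref{item:4deg3}: with at least three of $c_{15}, c_{25}, c_{35}, c_{45}$ nonzero, the $3 \times 3$ block of $X(\Delta|_{S^{(3,2)}})$ governing $\ker \Delta|_{S^{(3,2)}} \cap W_{(3,2)}$ should be nonsingular, forcing the intersection to be trivial. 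This computation foreshadows, and should ultimately be absorbed by, the general $\mu = (n-2,2)$ analysis advertised in the remark following \zcref{lem:n=4_2ker}.

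The principal obstacle is the bookkeeping for the straightening algorithm. For polytabloids of shape $(3, 2)$ and $(3, 1^2)$ the rows have length $3$ and the columns have length $2$ or $3$, so the Garnir-element rewriting of $(i,j) \mathbf{e}_t$ when $i,j$ share a row will typically return a combination of two or three standard polytabloids rather than a single one; consequently the matrices $X(\Delta|_{S^\mu})$ are noticeably denser than the $3 \times 3$ and $2 \times 2$ matrices obtained for $n=4$. Once those matrices are in hand, however, identifying the portion of $\ker \Delta|_{S^\mu}$ supported on $W_\mu$ reduces to polynomial identities in the four weights $c_{k5}$, analogous to the rank-one factorization displayed in \zcref{lem:n=4_2ker}, \zcref{item:4deg2}, and is directly amenable to the methods deployed for $n=4$.
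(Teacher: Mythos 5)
Your reduction is exactly the paper's: by \zcref{lem:kernel}, equality in \eqref{ineq:octo1} would require a nonzero $w\in\ker\Delta|_{S^\mu}$ realizing $\lambda_{\rm min}(L_{H\sqcup\{5\}},S^\mu)$; since $H$ is not the uniform $4$-cycle (\zcref{lem:5reducedcycle}) the $n=4$ results force any such $w$ into $S^{(3,1)}\mathord\uparrow^{\mathfrak{S}_5}\cap S^\mu$, so it suffices to show that $\ker\Delta|_{S^\mu}$ meets that subspace trivially. The gap is that you never establish this triviality: for both shapes you only describe a computation you intend to perform and assert its expected outcome (``I aim to show\ldots'', ``should be nonsingular''). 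Those two kernel statements are the entire substance of the proposition. In the paper they are \zcref{prop:11straighten} (for $(n-2,1^2)$, proved by precisely the Garnir-straightening computation you outline, so that half of your plan would go through) and \zcref{thm:kertriv} (for $(n-2,2)$ with vertex $n$ of degree $\geq 3$, which asserts the stronger fact that $\ker\Delta|_{S^{(n-2,2)}}=\{0\}$ outright, so no localization to $W_{(3,2)}$ is even needed there).

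Two further cautions on the $(3,2)$ case. First, a vector of $W_{(3,2)}$ lies in $\ker\Delta$ if and only if it is annihilated by the full $5\times 3$ column submatrix of $X(\Delta|_{S^{(3,2)}})$, not merely by the $3\times 3$ diagonal block; nonsingularity of that block is sufficient but is a strictly stronger, unverified claim, and if it fails you must fall back on the off-diagonal rows. Second, the paper deliberately abandons the polytabloid/straightening route for this shape: \zcref{thm:kertriv} is proved in \zcref{sec:keroctotriv} by embedding $S^{(n-2,2)}$ into $\mathbb{R}[\mathfrak{S}_n]$ via Young symmetrizers and testing against the kernel criterion for the correction matrix (\zcref{thm:kercheckable}), precisely because the direct matrix computation with four weight parameters, some possibly zero, is unwieldy even at $n=5$. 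Your plan is finite and could in principle be completed for $n=5$, but as written the proof is not there.
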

\begin{proof}
Suppose vertex $5$ has degree $\geq3$ and $\mu=(3,2)$.
Because $H$ is not the uniformly weighted $4$-cycle, \zcref{thm:existunique} and the branching rule place every minimum eigenvector of $L_{H\sqcup\{5\}}|_{S^{(3,2)}}$ in $\mathcal{B}_{(3,2)}(5)$.
By Proposition~\ref{prop:two-row-rigidity}, proved in \zcref{sec:rigidity},
$
\ker\Delta\cap\mathcal{B}_{(3,2)}(5)=\{0\}
$,
so Lemma~\ref{lem:kernel} implies strict inequality in \eqref{ineq:octo1}.

Next suppose $\mu=(3,1^2)$.
Proposition~\ref{prop:211} gives strictness in the $S^{(2,1^2)}$ summand of the restriction of $S^{(3,1^2)}$, so every minimum eigenvector of $L_{H\sqcup\{5\}}|_{S^{(3,1^2)}}$ lies in $\mathcal{B}_{(3,1^2)}(5)$.
By Proposition~\ref{prop:hook-rigidity}, proved in \zcref{sec:rigidity},
$
\ker\Delta\cap\mathcal{B}_{(3,1^2)}(5)=\{0\}$.
The desired strict inequality follows from Lemma~\ref{lem:kernel}.
\end{proof}

We next record the zero-propagation lemma needed for paths and cycles.

\begin{lemma}
\label{lem:vanishcycle}
Let $G$ be a weighted $n$-cycle or weighted $n$-path with positive edge weights. Then no nonzero eigenvector of the graph Laplacian $\mathcal{L}_G$ can vanish at two consecutive vertices.
\end{lemma}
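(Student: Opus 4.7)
The plan is a direct argument by contradiction using the Laplacian eigenvalue equation, exploiting the fact that every vertex in an $n$-cycle or $n$-path has degree at most $2$. Suppose toward contradiction that $\mathbf{w}$ is a nonzero eigenvector of $\mathcal{L}_G$ with eigenvalue $\lambda$, and that $\mathbf{w}(i) = \mathbf{w}(j) = 0$ for two adjacent vertices $i,j$ of $G$.

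The starting observation is that at any vertex $v$ with $\mathbf{w}(v) = 0$, the Laplacian eigenvalue equation simplifies to
\[
\sum_{u \sim v} c_{uv}\,\mathbf{w}(u) = 0.
\]
Since $v$ has at most two neighbors, and since each edge weight is strictly positive, the vanishing of $\mathbf{w}$ at one neighbor of $v$ forces $\mathbf{w}$ to vanish at the other neighbor (when it exists).

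I would then apply this observation iteratively, in both directions along the cycle or path, starting from the pair $(i,j)$. Specifically, applying the simplified equation at $i$ with the known zero neighbor $j$ yields $\mathbf{w}(i') = 0$ for the other neighbor $i'$ of $i$ (if any), and similarly for $j$. The new pairs $(i,i')$ and $(j,j')$ are again adjacent vertices on which $\mathbf{w}$ vanishes, so the propagation continues. For the $n$-cycle this sweeps around the whole cycle; for the $n$-path it propagates to both endpoints. In either case $\mathbf{w}$ vanishes at every vertex, contradicting that $\mathbf{w}$ is a (nonzero) eigenvector.

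No step is genuinely difficult, but one should be mildly careful about the endpoints of the $n$-path: at a degree-$1$ endpoint $v$, the equation at $v$ with its unique neighbor also zero yields only the tautology $0 = 0$, so no further information is extracted \emph{at the endpoint itself}. This is harmless because the propagation from the interior side still reaches the endpoint and concludes $\mathbf{w} \equiv 0$. This is the only point requiring a line of care; the rest is routine.
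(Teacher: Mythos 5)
Your proof is correct and follows essentially the same route as the paper's: apply the eigenvalue equation at a vertex where $\mathbf{w}$ and one neighbor both vanish, force the other neighbor to vanish, and propagate along the path or cycle to conclude $\mathbf{w}\equiv 0$, a contradiction. Your remark about degree-$1$ endpoints of the path is a reasonable extra line of care that the paper handles implicitly by propagating along a spanning tree.
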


\begin{proof}
Let $\boldsymbol{w}$ be an eigenvector and suppose it vanishes at adjacent vertices $x$ and $y$.  Evaluate the eigenvalue equation at $x$.  Every term incident to a zero neighbor vanishes except possibly the term from the other neighbor of $x$ along the path or cycle; because that edge has positive weight, the coordinate at that neighbor must also be zero.  The same argument at $y$ propagates the zero in the opposite direction whenever such a neighbor exists.  Iterating along the connected path or cycle forces every coordinate of $\boldsymbol{w}$ to vanish, a contradiction.
\end{proof}

\begin{proposition}
\label{prop:pathreduced}
Suppose $G$ is the $n$-path with vertices labeled $1$ through $n$ along the path, and $H$ is the reduced graph of $G$ at vertex $n$. Then $\lambda_2(\mathcal{L}_H) > \lambda_2(\mathcal{L}_G)$, \emph{i.e.,} \eqref{ineq:Schur} is a strict inequality.
\end{proposition}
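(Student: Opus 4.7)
The plan is to apply \zcref{prop:eigcomp} as a criterion and then derive a contradiction with \zcref{lem:vanishcycle}. Suppose, for contradiction, that $\lambda_2(\mathcal{L}_H)=\lambda_2(\mathcal{L}_G)$. Then by \zcref{prop:eigcomp} the graph Laplacian $\mathcal{L}_G$ admits a 2nd eigenvector of the form $\left[\begin{smallmatrix}{\bf w}\\ 0\end{smallmatrix}\right]$ for some nonzero ${\bf w}\in\mathbb{R}^{n-1}$ satisfying ${\bf w}\perp{\bf 1}^{(n-1)}$ and the cancellation identity $\sum_{i=1}^{n-1}c_{in}{\bf w}(i)=0$.

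The key observation is that since $G$ is the $n$-path with vertex labels $1,2,\ldots,n$ along the path, vertex $n$ has exactly one neighbor, namely vertex $n-1$. Consequently $c_{in}=0$ for all $1\leq i\leq n-2$, while $c_{n-1,n}>0$. The cancellation identity therefore collapses to $c_{n-1,n}\,{\bf w}(n-1)=0$, which forces ${\bf w}(n-1)=0$. Combined with the prescribed $0$ at the last coordinate, the eigenvector $\left[\begin{smallmatrix}{\bf w}\\ 0\end{smallmatrix}\right]$ of $\mathcal{L}_G$ vanishes at the two consecutive vertices $n-1$ and $n$ of the path.

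This directly contradicts \zcref{lem:vanishcycle}, which asserts that no eigenvector of $\mathcal{L}_G$ can vanish at two consecutive vertices along a path in $G$. Hence the hypothesis $\lambda_2(\mathcal{L}_H)=\lambda_2(\mathcal{L}_G)$ is untenable. Combined with the non-strict inequality $\lambda_2(\mathcal{L}_H)\geq\lambda_2(\mathcal{L}_G)$ already established in \zcref{prop:eigcomp}, we conclude $\lambda_2(\mathcal{L}_H)>\lambda_2(\mathcal{L}_G)$, as desired.

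The proof involves no real obstacle: the argument is essentially an immediate consequence of the structural fact that vertex $n$ is a pendant in the $n$-path, which forces the cancellation condition in \zcref{prop:eigcomp} to pinpoint a zero at vertex $n-1$. The only place where one must be careful is to invoke the eigenvector characterization from \zcref{prop:eigcomp} rather than merely the reduced eigenvector ${\bf w}$ on $H$, since (as \zcref{rem:warning} and \zcref{rem:subtlewarning} emphasize) ${\bf w}$ extended by $0$ at vertex $n$ need not be a 2nd eigenvector of $\mathcal{L}_G$ in general---but under the equality assumption \zcref{prop:eigcomp} guarantees that it is.
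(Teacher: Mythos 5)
Your proof is correct and follows essentially the same route as the paper: assume equality, use \zcref{prop:eigcomp} to produce a 2nd eigenvector of $\mathcal{L}_G$ vanishing at vertex $n$, note that the pendant structure forces $c_{n-1,n}{\bf w}(n-1)=0$ hence ${\bf w}(n-1)=0$, and contradict \zcref{lem:vanishcycle}. Your closing caveat about invoking the full eigenvector characterization (rather than only the reduced eigenvector on $H$) is exactly the subtlety the paper flags in \zcref{rem:warning}.
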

\begin{proof}
Suppose on the contrary that $\lambda_2(\mathcal{L}_H) = \lambda_2(\mathcal{L}_G)$. 
By \zcref{prop:eigcomp}, there exists a nonzero $\boldsymbol{w} = [\boldsymbol{w}(1), \cdots, \boldsymbol{w}(n-1)]^{\mathsf T}\in \mathbb{R}^{n-1}$ which satisfies $\boldsymbol{w}\perp \boldsymbol{1}^{(n-1)}$, $c_{n-1,n} \boldsymbol{w}(n-1)=0$, and that $\left[\begin{matrix}\boldsymbol{w}\\0\end{matrix}\right] \in \mathbb{R}^n$ is a second eigenvector of $\mathcal{L}_G$. 
Observe that since $c_{n-1,n}>0$, it must be that $\boldsymbol{w}(n-1)=0$.
Thus the vector $\left[\begin{matrix}\boldsymbol{w}\\0\end{matrix}\right]$ takes value $0$ at two consecutive vertices along a path of $G$, and by \zcref{lem:vanishcycle}, it cannot be an eigenvector of $\mathcal{L}_G$. 
We thus arrive at a contradiction.
\end{proof}

Our third proposition settles the last remaining case for $n=5$.

\begin{proposition}
\label{prop:5cycle}
Suppose $G$ is a weighted $5$-cycle whose reduction $H$ at vertex $5$ is not the uniformly weighted $4$-cycle; equivalently, exclude the family in \zcref{lem:5reducedcycle}, part~\ref{item:5red2}.  Label the vertices of $G$ in cyclic order as $5,2,3,4,1$.
For $\mu=(3,2)$, the inequality \eqref{ineq:octo1} is strict.
\end{proposition}
\GraphReductionDisplay

\begin{proof}
Suppose, to the contrary, that equality holds in \eqref{ineq:octo1}.
Since $H$ is not the uniformly weighted $4$-cycle, the results of \zcref{sec:n4} and the branching rule imply that every minimum eigenvector of $L_{H\sqcup\{5\}}|_{S^{(3,2)}}$ lies in $\mathcal{B}_{(3,2)}(5)$.
By Lemma~\ref{lem:kernel}, there is therefore a nonzero vector in
$\ker\Delta\cap\mathcal{B}_{(3,2)}(5)$ which is a minimum eigenvector of $L_{H\sqcup\{5\}}$.

Set $a=c_{15}$ and $b=c_{25}$.
By Proposition~\ref{prop:two-row-low-degree} and the branch realization of Proposition~\ref{prop:2branchmodel}, the corresponding graph-Laplacian eigenvector on $H$ is, up to a nonzero scalar multiple,
\[
\boldsymbol{u}=\big[3a+b,\,-(a+3b),\,-(a-b),\,-(a-b)\big]^{\mathsf T}.
\]
The vertices $3$ and $4$ have the same coordinate, whereas
\[
(\mathcal{L}_H\boldsymbol{u})(3)=4b\,c_{23}>0,
\qquad
(\mathcal{L}_H\boldsymbol{u})(4)=-4a\,c_{14}<0.
\]
These two values would have to be equal if $\boldsymbol{u}$ were an eigenvector, because $\boldsymbol{u}(3)=\boldsymbol{u}(4)$.
This contradiction proves that \eqref{ineq:octo1} is strict.
\end{proof}

\begin{proof}[Proof of \zcref{prop:5}]
The partitions $(1^5)$ and $(2,1^3)$ were settled before \zcref{table:5status}.  It remains to consider $(3,2)$, $(3,1^2)$, and $(2^2,1)$.

If the support graph has a vertex of degree at least $3$, choose such a vertex as vertex $5$.  This reduction is not exceptional by \zcref{lem:5reducedcycle}.  Hence \eqref{ineq:octo2} is strict for $\mu=(2^2,1)$ by \zcref{table:5status}, whereas \zcref{prop:5octo1} makes \eqref{ineq:octo1} strict for $\mu=(3,2)$ and $\mu=(3,1^2)$.

Suppose instead that the support graph has maximum degree at most $2$.  If it is a path, choose an endpoint as vertex $5$; then \zcref{prop:pathreduced} makes \eqref{ineq:Schur} strict for every remaining partition.  If it is a cycle, choose any cycle vertex as vertex $5$ and label the cycle as in \zcref{prop:5cycle}.  When the reduction is the uniformly weighted $4$-cycle, \zcref{lem:exceptionH} applies.  Otherwise, \eqref{ineq:octo2} is strict for $\mu=(2^2,1)$ by \zcref{table:5status}, \zcref{prop:5octo1} handles $\mu=(3,1^2)$, and \zcref{prop:5cycle} handles $\mu=(3,2)$.  These alternatives exhaust all connected support graphs on five vertices, and thus prove \zcref{prop:5}.
\end{proof}

\subsection{Induction for $n\geq 6$}

For $n\geq6$, the branchwise rigidity results of \zcref{sec:rigidity} handle vertices of degree at least $3$, while the path and cycle arguments handle graphs of maximum degree at most $2$.  We now assemble these cases into the induction.

\begin{proof}[Proof of \zcref{thm:main}]
It remains to prove \zcref{item:not4cycle}.
Suppose the induction hypothesis holds for some $n-1\geq5$: for every connected graph $H$ on $n-1$ vertices and every $\mu'\vdash n-1$, $\mu'\neq(n-1),(n-2,1)$, one has
$
\lambda_{\min}(L_H,S^{\mu'})>\lambda_{\min}(L_H,S^{(n-2,1)})$.
The base case $n-1=5$ is \zcref{prop:5}.

Fix a connected $n$-vertex graph $G$ and choose a vertex labeled $n$; let $H$ be the reduced graph at $n$.
By the induction hypothesis, \eqref{ineq:octo2} is strict unless $\mu=(n-2,2)$ or $\mu=(n-2,1^2)$, and in either remaining case every minimum eigenvector of $L_{H\sqcup\{n\}}|_{S^\mu}$ lies in $\mathcal{B}_\mu(n)$.
We show that one of the other inequalities in the octopus scheme is strict.

\begin{itemize}[wide]
\item Suppose $\mu=(n-2,2)$ and $G$ has maximum degree at most $2$.
If $G$ is a path, choose an endpoint to be vertex $n$; then \zcref{prop:pathreduced} makes \eqref{ineq:Schur} strict.
If $G$ is a cycle, label the two neighbors of $n$ by $1$ and $2$, in cyclic order, so that the vertices $3,4,\ldots,n-1$ are consecutive in $H$.
If equality held in \eqref{ineq:octo1}, then Lemma~\ref{lem:kernel}, Proposition~\ref{prop:2branchmodel}, and Proposition~\ref{prop:two-row-low-degree} would produce a second eigenvector $\boldsymbol{u}$ of $\mathcal{L}_H$ which is constant on $3,4,\ldots,n-1$.
Since $n\geq6$, the vertices $3,4,5$ are consecutive.
The eigenvalue equation at vertex $4$ gives $\lambda_2(\mathcal{L}_H)\boldsymbol{u}(4)=0$, so this common value is $0$.
Thus $\boldsymbol{u}$ vanishes at two consecutive vertices, contradicting \zcref{lem:vanishcycle}.

\item Suppose $\mu=(n-2,2)$ and $G$ has a vertex of degree at least $3$.
Choose such a vertex to be $n$.
Then Proposition~\ref{prop:two-row-rigidity} gives
$
\ker\Delta\cap\mathcal{B}_{(n-2,2)}(n)=\{0\}$,
so \eqref{ineq:octo1} is strict by Lemma~\ref{lem:kernel}.

\item Finally, suppose $\mu=(n-2,1^2)$.
By Proposition~\ref{prop:hook-rigidity},
$
\ker\Delta\cap\mathcal{B}_{(n-2,1^2)}(n)=\{0\}$,
and again \eqref{ineq:octo1} is strict.
\end{itemize}

Consequently
$
\lambda_{\min}(L_G,S^\mu)>\lambda_{\min}(L_G,S^{(n-1,1)})
$
for every $\mu\vdash n$, $\mu\neq(n),(n-1,1)$.
\end{proof}

\section{Branchwise octopus rigidity}
\label{sec:rigidity}

By \zcref{lem:kernel}, the exact obstruction to strictness in \eqref{ineq:octo1} is
$\ker\Delta\cap E_{\min}$, where $E_{\min}$ is the minimum eigenspace of
$L_{H\sqcup\{n\}}|_{S^\mu}$.  Under the inductive strictness hypothesis,
$E_{\min}\subseteq\mathcal{B}_\mu(n)$, so it suffices to compute
$\ker\Delta\cap\mathcal{B}_\mu(n)$.  We call its elements
\emph{branch-kernel vectors}.

The two-row branch $\mu=(n-2,2)$ is rigid when the removed vertex has degree at least $3$ and has a one-dimensional branch kernel in degrees $1$ and $2$.  The hook branch $\mu=(n-2,1^2)$ is rigid at every positive degree.  The direct $(2,2)$ calculation in \zcref{sec:n4} is recovered as the smallest two-row case.

Throughout this section assume $n\geq4$ and set
\[
m=n-1,\qquad c_i=c_{in}\quad(1\leq i\leq m),\qquad s=\sum_{i=1}^m c_i>0.
\]
An index $i$ with $c_i>0$ is an \emph{arm} of the removed vertex $n$; an index with $c_i=0$ is a \emph{nonarm}.  Thus the number of arms is $\deg_G(n)$ in the support graph defined in \zcref{sec:setup}.
To keep the normalization distinct from that of \eqref{eq:octopusop}, write
\begin{align}
\label{eq:scaledDelta}
\widehat{\Delta}
:=s\Delta
=s\sum_{i=1}^m c_i(\operatorname{Id}-(i,n))
-\sum_{1\leq i<j\leq m}c_ic_j(\operatorname{Id}-(i,j)).
\end{align}
Since $s>0$, one has $\ker\widehat{\Delta}=\ker\Delta$ on every representation.
Finally, define the mean-zero coordinate space
\[
U_m:=\left\{\boldsymbol{u}=(u_1,\ldots,u_m)\in\mathbb{R}^m:\sum_{i=1}^m u_i=0\right\},
\]
and observe that $U_m\cong S^{(n-2,1)}$ as an $\mathfrak{S}_m$-module.

\subsection{The $(n-2,2)$ branch}

Identify the permutation module $M^{(n-2,2)}$ with the space of functions $F$ on the $2$-subsets of $[n]$.
For such an $F$, set
\[
(\partial F)(i):=\sum_{j\in [n],~j\neq i}F_{\{i,j\}},\qquad i\in[n].
\]
The map $\partial:M^{(n-2,2)}\to M^{(n-1,1)}$ is $\mathfrak{S}_n$-equivariant.
\begin{lemma}
\label{lem:kerpartial}
The $\mathfrak{S}_n$-submodule
\[
\ker\partial
=\left\{F:\sum_{j\in [n],~j\neq i}F_{\{i,j\}}=0\text{ for every }i\in[n]\right\}
\]
is isomorphic to $S^{(n-2,2)}$.
\end{lemma}
\begin{proof}
By Young's rule,
$
M^{(n-2,2)}\cong S^{(n)}\oplus S^{(n-1,1)}\oplus S^{(n-2,2)}
$.
The map $\partial$ is nonzero on the trivial summand $S^{(n)}$.
For the standard summand, identify $S^{(n-1,1)}$ with
$V_n=\{\boldsymbol{v}\in\mathbb{R}^n:\boldsymbol{v}\perp\boldsymbol{1}^{(n)}\}$ and define
\[
\iota:V_n\longrightarrow M^{(n-2,2)},
\qquad
(\iota\boldsymbol{v})_{\{i,j\}}=v_i+v_j.
\]
The map $\iota$ is $\mathfrak{S}_n$-equivariant, and direct summation gives
$\partial\iota=(n-2)\operatorname{Id}_{V_n}$.  Hence $\iota$ is injective and $\partial$ maps its image isomorphically onto the standard summand of $M^{(n-1,1)}$.
On the trivial summand, the constant function $F_{\{i,j\}}=1$ is sent to the nonzero constant vector $(n-1)\boldsymbol{1}^{(n)}$, so $\partial$ also maps the trivial summand onto the trivial summand of the target.
Thus $\partial$ is surjective onto $M^{(n-1,1)}\cong S^{(n)}\oplus S^{(n-1,1)}$.
No copy of $S^{(n-2,2)}$ occurs in the target, and the multiplicity-free decomposition above therefore gives $\ker\partial\cong S^{(n-2,2)}$.
\end{proof}

Henceforth, realize $S^{(n-2,2)}$ as $\ker\partial$ and
$\mathcal{B}_{(n-2,2)}(n)$ as its unique $\mathfrak{S}_m$-submodule isomorphic to $S^{(n-2,1)}$.
The next proposition gives this branch explicitly.

\begin{proposition}
\label{prop:2branchmodel}
Let $\Phi_2:U_m\to\ker\partial$ be given by
\ifhtml
\begin{equation}(\Phi_2\boldsymbol{u})_{\{i,j\}}=u_i+u_j,\qquad1\leq i<j\leq m\label{eq:Phi2leaf}\end{equation}
\begin{equation}(\Phi_2\boldsymbol{u})_{\{i,n\}}=-(n-3)u_i,\qquad1\leq i\leq m\label{eq:Phi2root}\end{equation}
\else
\begin{align}
(\Phi_2\boldsymbol{u})_{\{i,j\}}&=u_i+u_j, &&1\leq i<j\leq m,
\label{eq:Phi2leaf}\\
(\Phi_2\boldsymbol{u})_{\{i,n\}}&=-(n-3)u_i, &&1\leq i\leq m.
\label{eq:Phi2root}
\end{align}
\fi
Then $\Phi_2$ is an $\mathfrak{S}_m$-equivariant isomorphism from $U_m$ onto $\mathcal{B}_{(n-2,2)}(n)$.
\end{proposition}

\begin{proof}
For $\boldsymbol{u}\in U_m$, direct summation in \eqref{eq:Phi2leaf}--\eqref{eq:Phi2root} gives $\partial(\Phi_2\boldsymbol{u})=0$.
Furthermore, $\Phi_2$ is $\mathfrak{S}_m$-equivariant and injective, so its image is an $\mathfrak{S}_m$-submodule isomorphic to $S^{(n-2,1)}$.
For $n\geq5$, the multiplicity-free branching rule gives
\[
S^{(n-2,2)}\mathord\downarrow_{\mathfrak{S}_m}
\cong S^{(n-3,2)}\oplus S^{(n-2,1)},
\]
and hence the image is exactly $\mathcal{B}_{(n-2,2)}(n)$.
For $n=4$, one instead has
\[
S^{(2,2)}\mathord\downarrow_{\mathfrak{S}_3}\cong S^{(2,1)},
\]
and both $U_3$ and $\ker\partial$ have dimension $2$; hence the injective map $\Phi_2:U_3\to\ker\partial$ is onto.
\end{proof}

The map $\Phi_2$ converts the branchwise kernel problem into the following linear system on $U_m$, expressed in coordinates adapted to the arms of the removed vertex.

\begin{lemma}
\label{lem:DeltaPhi2}
For $\boldsymbol{u}\in U_m$, set
\[
q:=\sum_{k=1}^m c_ku_k,
\qquad
y_i:=(n-2)su_i+q\quad(1\leq i\leq m).
\]
Then
\ifhtml
\begin{equation}(\widehat{\Delta}\Phi_2\boldsymbol{u})_{\{i,j\}}=c_jy_i+c_iy_j,\qquad1\leq i<j\leq m\label{eq:DeltaPhi2leaf}\end{equation}
\begin{equation}(\widehat{\Delta}\Phi_2\boldsymbol{u})_{\{i,n\}}=(2c_i-s)y_i-(n-1)c_iq,\qquad1\leq i\leq m\label{eq:DeltaPhi2root}\end{equation}
\else
\begin{align}
(\widehat{\Delta}\Phi_2\boldsymbol{u})_{\{i,j\}}&=c_jy_i+c_iy_j,
&&1\leq i<j\leq m,
\label{eq:DeltaPhi2leaf}\\
(\widehat{\Delta}\Phi_2\boldsymbol{u})_{\{i,n\}}&=(2c_i-s)y_i-(n-1)c_iq,
&&1\leq i\leq m.
\label{eq:DeltaPhi2root}
\end{align}
\fi
\end{lemma}

\begin{proof}
Fix $i<j\leq m$ and recall \eqref{eq:scaledDelta}.
In the first sum in \eqref{eq:scaledDelta}, only the transpositions $(i,n)$ and $(j,n)$ change the pair $\{i,j\}$, and their contribution to $(\widehat{\Delta}\Phi_2\boldsymbol{u})_{\{i,j\}}$ is
\[
s c_i\big(u_i+(n-2)u_j\big)+s c_j\big(u_j+(n-2)u_i\big).
\]
In the second sum in \eqref{eq:scaledDelta}, only transpositions containing $i$ or $j$ contribute, giving
\[
-c_i\sum_{k\neq i,j}c_k(u_i-u_k)
-c_j\sum_{k\neq i,j}c_k(u_j-u_k).
\]
Using $q=\sum_kc_ku_k$ and simplifying yields \eqref{eq:DeltaPhi2leaf}.
The same direct calculation at the pair $\{i,n\}$ gives
\[
(\widehat{\Delta}\Phi_2\boldsymbol{u})_{\{i,n\}}
=(n-2)s(2c_i-s)u_i-\big(s+(n-3)c_i\big)q,
\]
which is equivalent to \eqref{eq:DeltaPhi2root}.
\end{proof}

We now solve $\widehat{\Delta}\Phi_2\boldsymbol{u}=0$.

\begin{proposition}[Rigidity of the $(n-2,2)$ branch when degree $\geq3$]
\label{prop:two-row-rigidity}
If vertex $n$ has at least three arms, then
\[
\ker\Delta\cap\mathcal{B}_{(n-2,2)}(n)
=
\ker\widehat{\Delta}\cap\mathcal{B}_{(n-2,2)}(n)
=\{0\}.
\]
\end{proposition}

\begin{proof}
Let $\Phi_2\boldsymbol{u}\in\ker\widehat{\Delta}\cap\mathcal{B}_{(n-2,2)}(n)$, and use the notation of Lemma~\ref{lem:DeltaPhi2}.
Choose three indices $i,j,k$ for which $c_i,c_j,c_k>0$.
Equation \eqref{eq:DeltaPhi2leaf} gives
\[
c_jy_i+c_iy_j=0,
\qquad
c_ky_i+c_iy_k=0,
\qquad
c_ky_j+c_jy_k=0.
\]
After dividing by the positive products and setting $z_\ell=y_\ell/c_\ell$, these become
$z_i+z_j=z_i+z_k=z_j+z_k=0$.
Hence $y_i=y_j=y_k=0$.
Pairing one of these indices with every other arm $\ell$ in \eqref{eq:DeltaPhi2leaf} gives $y_\ell=0$ for every arm.
For a nonarm $r$, equation \eqref{eq:DeltaPhi2root} reduces to $-sy_r=0$, and hence $y_r=0$ as well.
Thus every $u_\ell$ equals $-q/((n-2)s)$.
Since $\boldsymbol{u}\in U_m$, this common value is zero, so $\boldsymbol{u}=0$ and therefore $\Phi_2\boldsymbol{u}=0$.
The equality of the two displayed kernel intersections follows from $\ker\widehat{\Delta}=\ker\Delta$.
\end{proof}

When vertex $n$ has one or two arms, one scalar branch-kernel direction survives.
Its coordinate vector is constant on every vertex not adjacent to $n$, which is the feature used in the cycle arguments of \zcref{sec:mainproof}, \emph{cf.\@} the proof of \zcref{prop:5cycle}.

\begin{proposition}[The branch-kernel line in the $(n-2,2)$ sector when degree $\leq2$]
\label{prop:two-row-low-degree}
Suppose vertex $n$ has one or two arms.
Relabel the vertices so that
\[
c_1>0,\qquad c_2\geq0,\qquad c_i=0\quad(3\leq i\leq m),
\]
where $c_2=0$ in the degree-$1$ case.
Define $\boldsymbol{u}^*\in U_m$ by
\begin{align}
u^*_1&=(n-2)c_1+c_2,
\notag\\
u^*_2&=-\big(c_1+(n-2)c_2\big),
\label{eq:ustar}\\
u^*_j&=-(c_1-c_2),\qquad 3\leq j\leq m.
\notag
\end{align}
Then
\[
\ker\Delta\cap\mathcal{B}_{(n-2,2)}(n)
=
\ker\widehat{\Delta}\cap\mathcal{B}_{(n-2,2)}(n)
=\mathbb{R}\,\Phi_2\boldsymbol{u}^*.
\]
In particular, every branch-kernel vector corresponds under $\Phi_2$ to a vector that is constant on the vertices not adjacent to $n$.
\end{proposition}

\begin{proof}
Let $\Phi_2\boldsymbol{u}$ belong to the intersection with $\ker\widehat{\Delta}$.
With the notation of Lemma~\ref{lem:DeltaPhi2}, equation \eqref{eq:DeltaPhi2leaf} applied to $\{1,r\}$ gives $y_r=0$ for every $r\geq3$.
The equation for $\{1,2\}$ gives
$
c_2y_1+c_1y_2=0
$,
and \eqref{eq:DeltaPhi2root} at $i=1$ gives
$
(c_1-c_2)y_1-mc_1q=0
$.
Since $c_1>0$, these relations determine $y_2,\ldots,y_m$ and $q$ from the single parameter $y_1$; then
\[
u_i=\frac{y_i-q}{(n-2)s}
\]
determines $\boldsymbol{u}$.
Thus the intersection has dimension at most $1$.

It remains to exhibit a nonzero vector in it.
For $\boldsymbol{u}=\boldsymbol{u}^*$, one computes
\[
q=(n-2)s(c_1-c_2),
\qquad
y_1=(n-2)s(n-1)c_1,
\qquad
y_2=-(n-2)s(n-1)c_2,
\qquad
y_r=0\quad(r\geq3).
\]
Substitution into \eqref{eq:DeltaPhi2leaf} and \eqref{eq:DeltaPhi2root} gives $\widehat{\Delta}\Phi_2\boldsymbol{u}^*=0$.
The vector $\boldsymbol{u}^*$ is nonzero and belongs to $U_m$, so it spans the intersection.
The equality with the $\Delta$-kernel intersection again follows from $\ker\widehat{\Delta}=\ker\Delta$.
\end{proof}

\begin{remark}[The four-vertex dictionary]
We now close the loop with the explicit calculation of \zcref{sec:n4}.
When $n=4$, the restriction $S^{(2,2)}\mathord\downarrow_{\mathfrak{S}_3}\cong S^{(2,1)}$ has only one summand, so
$
\mathcal{B}_{(2,2)}(4)=S^{(2,2)}
$.
The branch model also has a simple dictionary with the polytabloid basis from Lemma~\ref{lem:four-vertex-two-row-kernel}.
If $\boldsymbol{u}=(u_1,u_2,u_3)\in U_3$, comparison of the six two-subset coordinates gives
$
\Phi_2\boldsymbol{u}=-u_2\boldsymbol{e}_{t_2}-u_3\boldsymbol{e}_{t_3}
$.
Consequently, Proposition~\ref{prop:two-row-rigidity} specializes exactly to part~\ref{item:4deg3} of Lemma~\ref{lem:four-vertex-two-row-kernel}.
In the degree-at-most-$2$ case, interchange vertices $1$ and $2$ if necessary so that $c_1>0$, and relabel the remaining vertex so that $c_3=0$.
Then \eqref{eq:ustar} becomes
\[
\boldsymbol{u}^*=(2c_1+c_2,-c_1-2c_2,-c_1+c_2),
\]
and hence
\[
\Phi_2\boldsymbol{u}^*=(c_1+2c_2)\boldsymbol{e}_{t_2}+(c_1-c_2)\boldsymbol{e}_{t_3}.
\]
This is precisely the kernel generator in part~\ref{item:4deg2} of Lemma~\ref{lem:four-vertex-two-row-kernel}.
Thus the direct polytabloid computation discovers and illustrates the exceptional direction, while Propositions~\ref{prop:two-row-rigidity} and~\ref{prop:two-row-low-degree} identify it as the $n=4$ instance of the general degree dichotomy.
\end{remark}

\subsection{The $(n-2,1^2)$ branch}
\label{sec:poly11}

For the second surviving shape, the exterior-square realization makes the selected branch explicit.

Over characteristic zero, the Specht module of the hook $(n-r,1^r)$ is the $r$th exterior power of the standard representation.
We work over $\mathbb{R}$; the identification follows by base change from the usual integral models of the standard and hook Specht modules.
In particular,
\[
S^{(n-2,1^2)}\cong\bigwedge^2 S^{(n-1,1)}.
\]
See, for example, \cite{FultonHarris}*{Exercise 4.6} for this standard exterior-power realization.

Let
\[
V_n:=\left\{\boldsymbol{z}\in\mathbb{R}^n:\sum_{i=1}^n z_i=0\right\}
\]
be the standard representation, and henceforth realize $S^{(n-2,1^2)}$ as $\bigwedge^2V_n$.
Embed $U_m$ in $V_n$ by appending a zero $n$th coordinate, and set
\[
\boldsymbol{h}=(1,\ldots,1,-m)\in V_n.
\]
Since $V_n\mathord\downarrow_{\mathfrak{S}_m}\cong U_m\oplus\mathbb{R}\boldsymbol{h}$, one has
\[
\bigwedge^2V_n \bigg\downarrow_{\mathfrak{S}_m}
\cong \bigwedge^2U_m\oplus(U_m\wedge\boldsymbol{h}).
\]
Under this realization, the second summand is the selected branch $\mathcal{B}_{(n-2,1^2)}(n)$.
Define $\Phi_\wedge:U_m\to\bigwedge^2V_n$ by
\[
\Phi_\wedge\boldsymbol{v}:=\boldsymbol{v}\wedge\boldsymbol{h}.
\]
For a wedge vector $\omega$, write its coordinates antisymmetrically as $\omega_{ji}:=-\omega_{ij}$, and let permutations act by
$(\sigma\omega)_{ij}=\omega_{\sigma^{-1}(i),\sigma^{-1}(j)}$.
Then
\ifhtml
\begin{equation}(\Phi_\wedge\boldsymbol{v})_{ij}=v_i-v_j,\qquad1\leq i<j\leq m\label{eq:PhiWleaf}\end{equation}
\begin{equation}(\Phi_\wedge\boldsymbol{v})_{in}=-m v_i,\qquad1\leq i\leq m\label{eq:PhiWroot}\end{equation}
\else
\begin{align}
(\Phi_\wedge\boldsymbol{v})_{ij}&=v_i-v_j, &&1\leq i<j\leq m,
\label{eq:PhiWleaf}\\
(\Phi_\wedge\boldsymbol{v})_{in}&=-m v_i, &&1\leq i\leq m.
\label{eq:PhiWroot}
\end{align}
\fi
The preceding decomposition implies that $\Phi_\wedge$ is an $\mathfrak{S}_m$-equivariant isomorphism from $U_m$ onto $\mathcal{B}_{(n-2,1^2)}(n)$.

\begin{lemma}
\label{lem:DeltaPhiW}
For $\boldsymbol{v}\in U_m$, set
\[
q:=\sum_{k=1}^m c_kv_k,
\qquad
y_i:=nsv_i-q.
\]
Then, for $1\leq i<j\leq m$,
\begin{align}
\label{eq:DeltaPhiW}
(\widehat{\Delta}\Phi_\wedge\boldsymbol{v})_{ij}=c_jy_i-c_iy_j.
\end{align}
\end{lemma}

\begin{proof}
Fix $i<j\leq m$.
The transpositions $(i,n)$ and $(j,n)$ from the first sum in \eqref{eq:scaledDelta} contribute
\[
s c_i(v_i-nv_j)+s c_j(nv_i-v_j).
\]
In the second sum in \eqref{eq:scaledDelta}, the transposition $(i,j)$ reverses the orientation of the $ij$-coordinate and contributes $-2c_ic_j(v_i-v_j)$.
The remaining transpositions containing $i$ or $j$ contribute
\[
-c_i\sum_{k\neq i,j}c_k(v_i-v_k)
+c_j\sum_{k\neq i,j}c_k(v_j-v_k).
\]
Using $q=\sum_kc_kv_k$ and simplifying gives \eqref{eq:DeltaPhiW}.
\end{proof}

\begin{proposition}[Rigidity of the $(n-2,1^2)$ branch]
\label{prop:hook-rigidity}
Under the standing assumption $s>0$,
\[
\ker\Delta\cap\mathcal{B}_{(n-2,1^2)}(n)
=
\ker\widehat{\Delta}\cap\mathcal{B}_{(n-2,1^2)}(n)
=\{0\}.
\]
\end{proposition}

\begin{proof}
Let $\Phi_\wedge\boldsymbol{v}$ belong to the intersection with $\ker\widehat{\Delta}$, and use the notation of Lemma~\ref{lem:DeltaPhiW}.
Equation \eqref{eq:DeltaPhiW} implies
\[
c_jy_i=c_iy_j\qquad(1\leq i<j\leq m).
\]
Choose an arm $a$ and set $\rho:=y_a/c_a$.
Pairing $a$ with every other index shows that $y_i=\rho c_i$ for every $i$, including nonarms.
Summing $y_i=nsv_i-q$ and using $\sum_i v_i=0$ gives
\begin{align}
\rho s=-mq.
\label{eq:hookrelation1}
\end{align}
On the other hand,
\[
v_i=\frac{\rho c_i+q}{ns},
\]
so the definition of $q$ gives
\begin{align}
msq=\rho\sum_{i=1}^m c_i^2.
\label{eq:hookrelation2}
\end{align}
Combining \eqref{eq:hookrelation1} and \eqref{eq:hookrelation2} yields
\[
mq\left(s+\frac{\sum_i c_i^2}{s}\right)=0.
\]
Thus $q=0$, then $\rho=0$, and finally $\boldsymbol{v}=0$.
Therefore $\Phi_\wedge\boldsymbol{v}=0$.
The equality of the two displayed kernel intersections follows from $\ker\widehat{\Delta}=\ker\Delta$.
\end{proof}

\begin{bibdiv}
\begin{biblist}

\bib{AKP25}{article}{
   author={Alon, Gil},
   author={Kozma, Gady},
   author={Puder, Doron},
   title={{On the Aldous-Caputo spectral gap conjecture for hypergraphs}},
   journal={Math. Proc. Cambridge Philos. Soc.},
   volume={179},
   date={2025},
   number={2},
   pages={259--298},
   issn={0305-0041},
   review={\MR{4945969}},
   doi={10.1017/S0305004125000179},
}

\bib{CLR}{article}{
   author={Caputo, Pietro},
   author={Liggett, Thomas M.},
   author={Richthammer, Thomas},
   title={{Proof of Aldous' spectral gap conjecture}},
   journal={J. Amer. Math. Soc.},
   volume={23},
   date={2010},
   number={3},
   pages={831--851},
   issn={0894-0347},
   review={\MR{2629990}},
   doi={10.1090/S0894-0347-10-00659-4},
}

\bib{Cesi}{article}{
   author={Cesi, Filippo},
   title={{A few remarks on the octopus inequality and Aldous' spectral gap
   conjecture}},
   journal={Comm. Algebra},
   volume={44},
   date={2016},
   number={1},
   pages={279--302},
   issn={0092-7872},
   review={\MR{3413687}},
   doi={10.1080/00927872.2014.975349},
}

\bib{DS81}{article}{
   author={Diaconis, Persi},
   author={Shahshahani, Mehrdad},
   title={Generating a random permutation with random transpositions},
   journal={Z. Wahrsch. Verw. Gebiete},
   volume={57},
   date={1981},
   number={2},
   pages={159--179},
   issn={0044-3719},
   review={\MR{0626813}},
   doi={10.1007/BF00535487},
}

\bib{Fulton}{book}{
   author={Fulton, William},
   title={Young tableaux},
   series={London Mathematical Society Student Texts},
   volume={35},
   subtitle={With applications to representation theory and geometry},
   publisher={Cambridge University Press, Cambridge},
   date={1997},
   pages={x+260},
   isbn={0-521-56144-2},
   isbn={0-521-56724-6},
   review={\MR{1464693}},
}

\bib{FultonHarris}{book}{
   author={Fulton, William},
   author={Harris, Joe},
   title={Representation theory},
   series={Graduate Texts in Mathematics},
   volume={129},
   publisher={Springer-Verlag, New York},
   date={1991},
   pages={xvi+551},
   isbn={0-387-97527-6},
   isbn={0-387-97495-4},
   review={\MR{1153249}},
   doi={10.1007/978-1-4612-0979-9},
}

\bib{GodsilMeagher}{book}{
   author={Godsil, Chris},
   author={Meagher, Karen},
   title={Erd\H os-Ko-Rado theorems: algebraic approaches},
   series={Cambridge Studies in Advanced Mathematics},
   volume={149},
   publisher={Cambridge University Press, Cambridge},
   date={2016},
   pages={xvi+335},
   isbn={978-1-107-12844-6},
   review={\MR{3497070}},
   doi={10.1017/CBO9781316414958},
}

\bib{HJ}{article}{
   author={Handjani, Shirin},
   author={Jungreis, Douglas},
   title={Rate of convergence for shuffling cards by transpositions},
   journal={J. Theoret. Probab.},
   volume={9},
   date={1996},
   number={4},
   pages={983--993},
   issn={0894-9840},
   review={\MR{1419872}},
   doi={10.1007/BF02214260},
}

\bib{Sagan}{book}{
   author={Sagan, Bruce E.},
   title={{The Symmetric Group}},
   series={Graduate Texts in Mathematics},
   volume={203},
   edition={2},
   subtitle={Representations, combinatorial algorithms, and symmetric functions},
   publisher={Springer-Verlag, New York},
   date={2001},
   pages={xvi+238},
   isbn={0-387-95067-2},
   review={\MR{1824028}},
   doi={10.1007/978-1-4757-6804-6},
}

\end{biblist}
\end{bibdiv}

\end{document}